\def\cc{{\mathcal C}}
\def\dd{{\mathcal D}}
\def\hh{{\mathcal H}}
\def\pp{{\mathcal P}}
\def\ffi{\varphi}
\def\eps{\varepsilon}
\def\dst{\displaystyle}
\DeclareMathOperator{\supp}{supp}
\DeclareMathOperator{\vect}{span}
\def\C{{\mathbb{C}}}
\def\N{{\mathbb{N}}}
\def\R{{\mathbb{R}}}
\def\T{{\mathbb{T}}}
\def\Z{{\mathbb{Z}}}
\def\d{\,{\mathrm{d}}}
\newcommand{\norm}[1]{{\left\|{#1}\right\|}}
\newcommand{\ent}[1]{{\left[{#1}\right]}}
\newcommand{\abs}[1]{{\left|{#1}\right|}}
\newcommand{\scal}[1]{{\left\langle{#1}\right\rangle}}
\newcommand{\ps}{\psi_{n,c}^{(\alpha)}}
\newcommand{\wJ}{\widetilde P^{(\alpha)}}
\newcommand{\J}{P^{(\alpha,\beta)}}
\newcommand{\vp}{\psi^{(\alpha)}_{n,c}}
\newcommand{\jc}[1]{j^{(\alpha)}_{#1,c}}
\newtheorem{lemma}{Lemma}[section]
\newtheorem{proposition}[lemma]{Proposition}
\newtheorem{theorem}[lemma]{Theorem}
\theoremstyle{definition}
\theoremstyle{remark}
\newtheorem{remark}[lemma]{Remark}
\begin{document}

\title[$L^p$ convergence of prolates]{Mean convergence of prolate spheroidal series and their extensions}

\author{Mourad Boulsane,  Philippe Jaming \& Ahmed Souabni}

\address{Mourad Boulsane
\noindent Address: University of Carthage,
Department of Mathematics, Faculty of Sciences of Bizerte, Bizerte, Tunisia.}
\email{boulsane.mourad@hotmail.fr}

\address{Philippe Jaming
\noindent Address: Univ. Bordeaux, IMB, UMR 5251, F-33400 Talence, France.
CNRS, IMB, UMR 5251, F-33400 Talence, France.}
\email{Philippe.Jaming@gmail.com}

\address{Ahmed Souabni
\noindent Address: University of  Carthage,
Department of Mathematics, Faculty of Sciences of Bizerte, Bizerte, Tunisia.}
\email{souabniahmed@yahoo.fr}
\subjclass[2010]{42C10;42C40 }
\keywords{mean convergence, prolate spheroidal wave function}

\begin{abstract}
The aim of this paper is to establish the range of $p$'s for which the expansion of a function $f\in L^p$ in
a generalized prolate spheroidal wave function (PSWFs) basis converges to $f$ in $L^p$.
Two generalizations of PSWFs are considered here, the circular PSWFs introduced by D. Slepian
and the weighted PSWFs introduced by Wang and Zhang.
Both cases cover the classical PSWFs for which the corresponding results has been previously established by Barcel\'o and Cordoba.

To establish those results, we prove a general result that allows to extend mean convergence in a given basis 
(e.g. Jacobi polynomials or Bessel basis) to mean convergence in a second basis (here the generalized PSWFs).
\end{abstract}

\maketitle

\section{Introduction}

In their seminal work from the 70s, Landau, Pollak and Slepian  \cite{LP1,LP2,Slepian1}
have shown that the orthonormal basis that is best concentrated in the time-frequency plane is given by the Prolate Spheroidal Wave Functions (PSFWs). This basis therefore provides an efficient tool for signal processing. Since then, the PSFWs have proven useful in many applications ranging from random matrix theory
({\it e.g.} \cite{dCM,Me,Dy}) to numerical analysis ({\it e.g.} \cite{XRY,Wang}).
While taking naturally place in an $L^2$ setting, one may also consider the behavior of expansions of functions in the
PSFW basis in the $L^p$-setting. This has been done by Barcelo and Cordoba for the usual PSFWs. Our aim here is to extend this work
to two natural generalizations of the PSFWs, namely, the Hankel-PSFWs introduced by Slepian \cite{Slepian2}
and the weighted PSFWs recently introduced by Wang and Zhang \cite{Wang2}.

Let us now be more precise with the results in this paper. First let us recall that the
prolate spheroidal wave functions $(\psi_{n,c})_{n\geq 0}$ are eigenvectors of an integral operator.
Using the min-max theorem, they can thus be obtained as solutions of an extremal problem:
for $c>0$, recall that the Paley-Wiener space $PW_c=\{f\in L^2(\R)\,: \supp \widehat{f}\subset [-c,c]\}$
where $\widehat{f}$ stands for the Fourier transform of $f$.
Then one sets 
$$
\psi_{n,c}=\mathrm{argmax}\left\{\frac{\norm{f}_{L^2(I)}}{\norm{f}_{L^2(\R)}}\,: f\in PW_c,\ 
f\in\vect\{\psi_{k,c},k<n\}^\perp\right\}.
$$
A fundamental fact discovered by Landau, Pollak and Slepian is that
they are also eigenfunctions of a Sturm-Liouville operator, a fact tagged as a ``happy miracle'' by Slepian \cite{Slepian4}. Another key fact four our purpuses is that $(\psi_{n,c})_{n\geq 0}$ is an orthonormal basis of $PW_c$
and this basis is the best concentrated in the time domain.

In this paper, we are interested in two generalizations of the PSFWs.
For both cases, the basis is constructed as a set of eigenvectors of an integral operator, the happy miracle occurs so that they are also eigenvectors of a Sturm-Liouville operator and, more important for us,
they form an orthonormal basis of a Paley-Wiener type of space.

The first basis we consider was introduced by Slepian \cite{Slepian2}.
It is an analogue of the classical PSFWs adapted to higher dimensional {\em radial} Fourier analysis. To introduce them, we need some further notation.
First, we replace the Fourier transform by the {\em Hankel transform} defined for $f\in L^1(0,+\infty)$ by
$$
\mathcal H^{\alpha}f(x)=\int_0^{+\infty}\sqrt{xy}J_{\alpha}(xy) f(y)\d y
$$
where $J_\alpha$ is the Bessel function and $\alpha>-1/2$. Like the usual Fourier transform, the Hankel transform extends into a unitary operator on $L^2(0,+\infty)$.
The corresponding Paley-Wiener space is then denoted by
$$
HB_c^{(\alpha)} =\left\{f\in L^2(0,\infty); \supp \mathcal{H}^{\alpha}(f)\subseteq[0,c]\right\}.
$$
Finally, the Circular (Hankel) Prolate Spheroidal Wave Functions (CPSWFs) are defined by
$$
\psi_{n,c}^\alpha=\mathrm{argmax}\left\{\frac{\norm{f}_{L^2(0,1)}}{\norm{f}_{L^2(0,+\infty)}}\,: f\in HB_c^{(\alpha)},\ 
f\in\vect\{\psi_{k,c}^\alpha,k<n\}^\perp\right\}.
$$
Then $(\psi_{n,c}^\alpha)_{n\geq 0}$ is an orthonormal basis of $HB_c^{(\alpha)}$.
Note also that when $\alpha=0$, these are usual PSFWs, more precisely, $\psi_{n,c}^0=\psi_{2n,c}$.

The second basis we consider, the Weighted Prolate Spheroidal Wawe Functions (WPSFWs), 
is defined in a similar fashion. We first introduce the weighted Paley-Wiener spaces 
$$
wPW_c^{(\alpha)}=\left\{f\in L^2(\R); \supp \widehat{f}\subseteq[-c,c],
\ \widehat{f}\in L^2\bigl((-c,c),(1-x^2/c^2)^{-\alpha}\d x\bigr)
\right\}.
$$
The WPSFWs are defined by
$$
\Psi_{n,c}^\alpha=\mathrm{argmax}\left\{\frac{\norm{f}_{L^2\bigl((-1,1),(1-x^2)^\alpha\d x\bigr)}}
{\norm{\widehat{f}}_{L^2\bigl((-c,c),(1-x^2/c^2)^{-\alpha}\d x\bigr)}}\,: f\in wPW_c^{(\alpha)},\ 
f\in\vect\{\Psi_{k,c}^\alpha,k<n\}^\perp\right\}.
$$
Again, $\Psi_{n,c}^\alpha$ is an orthonormal basis of $wPW_c^{(\alpha)}$ and $\Psi_{n,c}^0=\psi_{n,c}$.

The aim of this paper is to characterize the range of $p$'s for which prolate spheroidal wave 
functions converge in $L^p$.
The subject of the $L^p$-convergence (also called mean convergence of order $p$) 
of orthogonal series, is a central subject in harmonic analysis. 
This kind of convergence is briefly described as follows. 
Let  $1 < p <\infty$ , $a,b \in \overline{\mathbb{R}}$, $I=(a,b)$,
and $\{\phi_n\}$ an orthonormal set of the weighted Hilbert space 
$L^2(I, \omega)$-space, where $\omega$ is a positive weight function. 
We define the kernel 
$$
K_N(x,y)=\sum_{n=0}^N\phi_n(x)\overline{\phi_n(y)}
$$
so that the orthogonal projection of $f\in L^2(I,\omega)$ on the span of $\{\phi_0,\ldots,\phi_N\}$ is given by
$$
\mathcal{K}_N(f)(x)=\int_I K_N(x,y)f(y)\omega(y)\d y=\sum_{n=0}^N a_n(f) \phi_n(x)
$$
with
$$
a_n(f) = \int_I f(y)\overline{\phi_n(y)} \omega(x)\d y.
$$
Now, this last expression may be well defined even for $f\in L^p(I,\omega)$, $p\not=2$ and then
$\mathcal{K}_N(f)$ is also well defined. This happens for instance if $\phi_n\in L^p(I,\omega)$ for every $p$ which is often
the case in practice.
The orthonormal set $\{\phi_n\}$ is said to have mean convergence of order $p$, or $L^p$-convergence 
over the Banach space
$L^p(I,d\omega)$ if for every $f \in L^p(I,d\omega)$, $\mathcal{K}_N(f)$ is well defined and
$$
\lim_{N \to \infty}\norm{f-\mathcal{K}_Nf}_{L^p(I,\omega)} = \lim_{N \to \infty} \bigg[ \int_{a}^{b}|f(x)- \mathcal{K}_N(f)(x) |^p \omega (x)\d x\bigg]^{1/p} =0.$$ 
This concept of mean convergence is also valid on  a subspace $\mathcal B,$ rather than the whole Banach space  $L^p(I,d\omega)$.

To the best of our knowledge, M. Riesz was the first in the late 1920's, to investigate this problem in the special case of the
trigonometric Fourier series over $L^p(\T),$ $1\leq p < +\infty.$ More precisely, in \cite{Riesz}, it has been shown that the 
Hilbert transform over the torus $\tt$ is bounded on $L^p(\T)$ if and only if $ p>1$. Further, the $L^p-$boundedness of the
Hilbert transform is equivalent to the mean convergence of the Fourier series on $L^p(\T)$. 
In the late 1940's, H. Pollard, in a series of papers  \cite{Pollard1,Pollard2,Pollard3}, has studied the mean convergence of some
classical orthogonal polynomials, such as Legendre and Jacobi polynomials. In particular, in the later case, he has shown that if 
$\alpha \geq -\frac{1}{2}$ and $\omega_{\alpha}(x)=(1-x^2)^{\alpha}$, $x\in I=[-1,1]$ is the Jacobi weight,
then the mean convergence over $L^p(I,\omega_{\alpha,})$ of Jacobi series expansion holds true, whenever
$$
m(\alpha):= 4 \frac{\alpha+1}{2\alpha+3} < p <
M(\alpha) := 4 \frac{\alpha+1}{2\alpha+1}.
$$
He has also shown that the previous conclusion fails if $p< m(\alpha)$ or $p>M(\alpha)$. In \cite{Milton}, the authors have 
shown that the mean convergence of the Bessel series expansion over the space $L^p([0,1], x\,\mathrm{d}x)$ holds true whenever $4/3 < p  < 4$.
Later on, Newman and Rudin \cite{Newman} have shown that the mean convergence fails for the critical values of 
$p= m(\alpha), p=M(\alpha)$ in the Jacobi case and for $p=4/3,\,  p=4$ for the Bessel case. 
More recently, in \cite{J.L.VARONA} Varona has extended the mean convergence of Bessel series for $ \alpha > -1/2 $
over the Hankel Paley-Wiener space of functions from $L^p([0,\infty),x^\alpha\,\mathrm{d}x )$ with compactly supported Hankel transforms.  

An other important extension has been given by Barcelo and Cordoba \cite{BC} where they have shown that the series expansion 
in terms of the classical  prolate spheroidal wave functions (PSWFs)  has the mean convergence property over the previous Fourier Paley-
Wiener space, holds true if and only if $ 4/3 < p < 4 $. This is the main source of inspiration for our work, so let us detail
the ideas behind \cite{BC}. Barcelo and Cordoba first determine the expansion of PSFWs in a basis consisting of Bessel functions.
It turns out that the kernel of the projection onto this second basis is given by a Christoffel-Darboux like formula
so that it's mean convergence properties can be deduced from estimates for weighted Hilbert transforms. The last step of the proof
is a sort of transference principle which allows to show that the PSWFs have the mean convergence property of order $p$
exactly when the Bessel basis has this property.

Our first aim here is to formalize this transference principle. We consider two orthonormal bases 
$(\ffi_n)_{n\geq 0}$ and $(\psi_n)_{n\geq 0}$ of $L^2(\Omega,\mu)$. Then, we establish a fairly
general principle giving several conditions on $(\ffi_n)_{n\geq 0}$ and $(\psi_n)_{n\geq 0}$ that will ensure the mutual mean 
convergence property of order $p$ associated for the two bases.

The second part of the paper then consists in applying this principle to the two extensions of PSFWs mentionned above.
For the Circular PSFWs the second basis consists again of a basis built from Bessel functions for which we have to adapt the proof
of Barcelo-Cordoba to establish the range of $p$'s for which mean convergence holds. The case of Weighted PSFWs is a bit simpler
as the second basis consists of Jacobi polynomials for which the mean convergence property is already known. As this case is simpler,
it will be treated first. We may now state our main result:

\medskip

\noindent {\bf Theorem.} {\sl Let $\alpha>-1/2$, $c>0$, $N\geq 0$. Let $I=(-1,1)$ and $\omega_\alpha(x)=(1-x^2)^\alpha$.
\begin{itemize}
	\item Let $p_0=2-\frac{1}{\alpha+3/2}$ and $p_0^{\prime}=\dst 2+\frac{1}{\alpha+1/2}$.
	Let $(\Psi_{n,c}^{(\alpha)})_{n\geq 0}$ be the family of weighted prolate spheroidal wave functions.
	For a smooth function $f$ on $I=(-1,1)$, define
	$$
	\Psi^{(\alpha)}_Nf=\sum_{n=0}^N\scal{f,\Psi_{n,c}^{(\alpha)}}_{L^2(I,\omega_\alpha)}\Psi_{n,c}^{(\alpha)}.
	$$
	Then, for every $p\in(1,\infty)$, $\Psi^{(\alpha)}_N$ extends to a bounded operator $L^p(I,\omega_\alpha(x)\,\mathrm{d}x)\to L^p(I,\omega_\alpha(x)\,\mathrm{d}x)$.
	Further
	$$
	\Psi^{(\alpha)}_Nf\to f\qquad \mbox{in }L^p(I,\omega_\alpha(x)\,\mathrm{d}x)
	$$
	for every $f\in L^p(I,\omega_\alpha(x)\,\mathrm{d}x)$ if and only if $p\in(p_0,p_0^\prime)$.
	\item
	Let $(\psi_{n,c}^{(\alpha)})_{n\geq 0}$ be the family of Hankel prolate spheroidal wave functions.
	For a smooth function $f$ on $I=(0,\infty)$, define
	$$
	\Psi^{(\alpha)}_Nf=\sum_{n=0}^N\scal{f,\psi_{n,c}^{(\alpha)}}_{L^2(0,\infty)}\psi_{n,c}^{(\alpha)}.
	$$
	Then, for every $p\in(1,\infty)$, $\Psi^{(\alpha)}_N$ extends to a bounded operator $L^p(0,\infty)\to L^p(0,\infty)$.
	Further
	$$
	\Psi^{(\alpha)}_Nf\to f\qquad \mbox{in }L^p(0,\infty)
	$$
	for every $f\in B_{c,p}^{\alpha}$ if and only if $p\in(4/3,4)$.
\end{itemize}
}

\medskip

This work is organized a follows. In section 2, we study a general principle that ensure the mutual $L^p$-convergence of two series expansion with respect to two different orthonormal bases of a Hilbert space $L^2(\mu) $. In section 3, we give a list of technical lemmas that ensure or simplify the conditions given in the general principle of the previous section. In section 4, we apply the results of sections 2 and 3 and check in detail that the conditions that we have established in the case of general principle hold true for the series expansion in the weighted PSWFs. Finally in section 5, we prove that this mean convergence property holds also true for circular PSWFs series.

\section{The General principal}

\subsection{The setting and the main result}
\label{sec:mainth}

As already explained, to prove the $L^p$-convergence of the expansion in a prolate basis, we will expend the
prolates in a second basis for which this $L^p$-convergence is easier to study. This idea is formalized in the following setting:

We consider a measure space $(\Omega,\mu)$ and assume that, for every $1<p<\infty$,
$L^p(\Omega,\mu)$ is infinite dimensional and separable. The dual index of $p$ will be denoted by $p'=\dst\frac{p}{p-1}$.
We consider two orthonormal bases $(\ffi_n)_{n\geq 0}$ and $(\psi_n)_{n\geq 0}$ of $L^2(\Omega,\mu)$. For $N\geq 0$
we denote by $\Phi_N$ (resp. $\Psi_N$) both the orthogonal projection on $\vect\{\ffi_0,\ldots,\ffi_N\}$
(resp. $\vect\{\psi_0,\ldots,\psi_N\}$) and its kernel
$$
\Phi_N(x,y)=\sum_{n=0}^N\ffi_n(x)\overline{\ffi_n(y)}
\quad\mbox{resp.}\quad
\Psi_N(x,y)=\sum_{n=0}^N\psi_n(x)\overline{\psi_n(y)}.
$$

Our aim in this section is to define several conditions on $\ffi_n,\psi_n$ that will ensure that, for any $1<p<\infty$,
$\Phi_Nf\to f$ in some $L^p$ if and only if $\Psi_Nf\to f$ in $L^p$.
The first condition is of course that this makes sense. The second one is that some relation exists between
the two bases. The other conditions are technical and are those that will be the most difficult to check in practice.

\begin{itemize}
\item[$(L)$] For every $1<p<\infty$, and every $n$, $\ffi_n\in L^p(\mu)$.
Further, we assume that there is a $0\leq \gamma_p<1$ such that 
\begin{equation}
\label{eq:lpind}
\norm{\ffi_n}_{L^p(\mu)}\lesssim n^{\gamma_p}.
\end{equation}
Finally, we assume that $0<\alpha_p:=\gamma_p+\gamma_{p'}<1$ and that there is a $p_0$ such that
if $p\in (p_0,p_0^{\prime})$, $\alpha_p=0$. In other words, for $p\in (p_0,p_0^{\prime})$,
\begin{equation}
\label{eq:lp1}
\norm{\ffi_n}_{L^p(\mu)}\norm{\ffi_n}_{L^{p'}(\mu)}\lesssim 1
\end{equation}
while for $p\notin (p_0,p_0^{\prime})$,
\begin{equation}
\label{eq:lp2}
\norm{\ffi_n}_{L^p(\mu)}\norm{\ffi_n}_{L^{p'}(\mu)}\lesssim n^{\alpha_p},\qquad \alpha_p<1.
\end{equation}

\item[$(R)$] Let $\alpha_k^n=\scal{\psi_n,\ffi_k}_{L^2(\mu)}$ so that $\psi_n=\sum_{n=0}^\infty \alpha_k^n\ffi_k$. We assume that
there exists an integer $n_0$ and $\kappa,\kappa'>0$ two real numbers such that
$(\alpha_k^n)$ satisfies a three term recursion formula
$$
f(k,n)\alpha_k^n=a_k\alpha_{k-1}^n+a_{k+1}\alpha_{k+1}^n
$$
where
\begin{enumerate}
\item $|a_k|\leq\frac{1}{2}$,

\item for fixed $n$, there is a $k_n$ such that $|f(k,n)|\gtrsim k^2$ when $k\geq k_n$,

\item there is an $n_0\geq0$ such that, for $n\geq n_0$, and every $k\geq 0$, $|f(k,n)|\gtrsim k|k-n|$,

\item
\begin{equation}
\label{eq:diff}
\abs{\frac{a_{n+1}}{f(n+1,n)}-\frac{a_{n+2}}{f(n+2,n+1)}}\lesssim n^{-2}.
\end{equation}
\end{enumerate}

\item[$(B)$] Let $\tilde\Phi_N(x,y)=\dst\sum_{n=0}^N\ffi_n(x)\overline{\ffi_{n+1}(y)}$ and write also
$\tilde\Phi_N$ for the corresponding integral operator. For every $1<p<\infty$, we assume that $\tilde\Phi_N$
defines a bounded linear operator on $L^p(\mu)$ and that there exists $\beta_p<1$ such that,
for every $f\in L^p(\mu)$
$$
\norm{\tilde\Phi_N f}_{L^p(\mu)}\lesssim N^{\beta_p}\norm{f}_{L^p(\mu)}.
$$

\item[(C)] There exists $1<p_0<2$ such that $\Phi_Nf\to f$
for every $f\in L^p(\Omega,\mu)$, with convergence in $L^p(\Omega,\mu)$, if and only if $p_0<p<p_0^\prime$.

\item[(D)] There exists a set $\dd$ that is dense in every $L^p(\mu)$, $1<p<\infty$, such that,
for every $1<p<\infty$ and every $f\in L^p(\mu)$, $\Phi_Nf,\Psi_Nf\to f$ in $L^p(\mu)$ when $N\to\infty$.
\end{itemize}

In this all of Section \ref{sec:mainth} we will use the above notation and assume that these conditions are fulfilled.
Our main result is then:

\begin{theorem}\label{th:main}
With the above notation, and under conditions $(L)$, $(R)$, $(B)$, $(C)$ and $(D)$, we have
$\Psi_Nf\to f$
for every $f\in L^p(\Omega,\mu)$, with convergence in $L^p(\Omega,\mu)$, if and only if $p_0<p<p_0^\prime$.
\end{theorem}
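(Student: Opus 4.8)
The plan is to reduce the convergence statement to a uniform operator bound and then to compare $\Psi_N$ with $\Phi_N$, whose behaviour is governed by condition $(C)$. First I would invoke the Banach--Steinhaus machinery: since $(D)$ provides a common dense set on which $\Psi_Nf\to f$, the convergence $\Psi_Nf\to f$ for \emph{every} $f\in L^p(\Omega,\mu)$ is equivalent to the uniform bound $\sup_N\norm{\Psi_N}_{L^p\to L^p}<\infty$ (the forward implication is the uniform boundedness principle, the converse the usual $\eps/3$ argument using density). By the same token, $(C)$ together with Banach--Steinhaus and $(D)$ says exactly that $\sup_N\norm{\Phi_N}_{L^p\to L^p}<\infty$ if and only if $p_0<p<p_0'$, and that $\norm{\Phi_N}_{L^p\to L^p}\to\infty$ otherwise. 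Thus the whole theorem will follow once I establish that the remainder $M_N:=\Psi_N-\Phi_N$ is bounded on $L^p(\mu)$ uniformly in $N$, for \emph{every} $1<p<\infty$: then $\norm{\Psi_N}\le\norm{\Phi_N}+\sup_N\norm{M_N}$ gives the ``if'' direction, while $\norm{\Psi_N}\ge\norm{\Phi_N}-\sup_N\norm{M_N}\to\infty$ for $p\notin(p_0,p_0')$ gives the ``only if'' direction.

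The core of the argument is therefore the analysis of $M_N$. Expanding $\psi_n=\sum_k\alpha_k^n\ffi_k$ and using that the coefficient matrix $(\alpha_k^n)$ is unitary (both families are orthonormal bases), I would write the kernel of $\Psi_N$ as $\sum_{k,l}\bigl(\sum_{n=0}^N\alpha_k^n\overline{\alpha_l^n}\bigr)\ffi_k\otimes\overline{\ffi_l}$ and subtract the diagonal kernel $\Phi_N$, so that the entries of $M_N$ are $\sum_{n=0}^N\alpha_k^n\overline{\alpha_l^n}-\delta_{kl}\mathbf{1}_{k\le N}$. The recursion $(R)$ is the tool that controls these entries: items (2) and (3) give $|f(k,n)|\gtrsim k|k-n|$ (for $n\ge n_0$), so solving $f(k,n)\alpha_k^n=a_k\alpha_{k-1}^n+a_{k+1}\alpha_{k+1}^n$ off the diagonal shows that $\alpha_k^n$ decays rapidly as $|k-n|$ grows. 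Combined with the completeness relation $\sum_{n\ge0}\alpha_k^n\overline{\alpha_l^n}=\delta_{kl}$, this localises $M_N$ near the ``corner'' $k,l\approx N$ and makes its entries decay in $|k-l|$.

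It then remains to bound $M_N$ as an operator, which I would split according to the distance $|k-l|$ from the diagonal. The diagonal correction and the off-diagonals with $|k-l|\ge2$ I expect to handle \emph{termwise}: there the entries carry at least two ``small'' factors $a/f=O(1/N)$, so $\sum_{k,l}|(M_N)_{kl}|\,\norm{\ffi_k}_{L^p}\norm{\ffi_l}_{L^{p'}}$ converges by the $L^p$ estimates $(L)$, even where $\norm{\ffi_k}_{L^p}\norm{\ffi_k}_{L^{p'}}\lesssim k^{\alpha_p}$ grows. The genuinely delicate piece is the nearest off-diagonal $U_N=\sum_k c_k^{(N)}\,\ffi_k\otimes\overline{\ffi_{k+1}}$ with $c_k^{(N)}=\sum_{n\le N}\alpha_k^n\overline{\alpha_{k+1}^n}$: here the factors $\norm{\ffi_k}_{L^p}\norm{\ffi_{k+1}}_{L^{p'}}$ need not be summable, so the triangle inequality is useless. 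Instead I would write $\ffi_k\otimes\overline{\ffi_{k+1}}=\tilde\Phi_k-\tilde\Phi_{k-1}$ and sum by parts, turning $U_N$ into a boundary term $c_N^{(N)}\tilde\Phi_N$ plus $\sum_k\bigl(c_k^{(N)}-c_{k+1}^{(N)}\bigr)\tilde\Phi_k$. The slow variation estimate \eqref{eq:diff} of $(R)$(4) gives $|c_{k+1}^{(N)}-c_k^{(N)}|\lesssim k^{-2}$, condition $(B)$ supplies $\norm{\tilde\Phi_k}_{L^p\to L^p}\lesssim k^{\beta_p}$ with $\beta_p<1$, and $|c_N^{(N)}|\lesssim N^{-1}$; together these yield $\norm{U_N}_{L^p\to L^p}\lesssim N^{\beta_p-1}+\sum_k k^{\beta_p-2}\lesssim1$, uniformly in $N$ and across the full range $1<p<\infty$.

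The main obstacle is precisely this last estimate: reconciling the fact that single rank-one pieces $\ffi_k\otimes\overline{\ffi_{k+1}}$ can have $L^p$-operator norm as large as $k^{\alpha_p}$ (which is exactly what makes $\Phi_N$ fail to converge outside $(p_0,p_0')$) with the requirement that their \emph{weighted} sum $U_N$ stay uniformly bounded. The needed cancellation is extracted only by marrying the slow variation of the coefficients, $(R)$(4), to the operator bound on the shifted kernel, $(B)$, through summation by parts; a crude absolute estimate cannot succeed. Once $\sup_N\norm{M_N}_{L^p\to L^p}<\infty$ is secured for every $p$, the theorem follows at once from the comparison of the first paragraph together with $(C)$.
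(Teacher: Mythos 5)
Your proposal takes essentially the same route as the paper's proof: Banach--Steinhaus together with the dense class from $(D)$ reduces the theorem to the uniform bound $\sup_N\norm{\Psi_N-\Phi_N}_{L^p(\mu)\to L^p(\mu)}<\infty$ for every $1<p<\infty$; the far-from-diagonal part of the kernel is estimated termwise from $(L)$ and the decay of the $\alpha_k^n$ supplied by $(R)$; and the first off-diagonal is handled by Abel summation, marrying \eqref{eq:diff} to condition $(B)$ --- precisely the paper's treatment of its terms $K_6$ and $K_4,K_5$. The only real difference is bookkeeping: the paper decomposes term by term in $n$ (formula \eqref{decomgen}), while you work with the exact matrix entries $(M_N)_{k\ell}=\sum_{n\le N}\alpha_k^n\overline{\alpha_\ell^n}-\delta_{k\ell}\mathbf{1}_{k\le N}$. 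Your decomposition is an identity, whereas \eqref{decomgen} as written is not (it omits, for instance, the pairs $(k,\ell)=(n-1,n)$, $(n,n-1)$, $(n+1,n+1)$); note also that for the finitely many rows $n\le n_0$ you must, as the paper does with $K_1$, fall back on the fixed-$n$ bound $\abs{\alpha_k^n}\lesssim_n k^{-2}$ rather than on decay in $|k-n|$.

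However, your aggregation of the first off-diagonal hides a genuine gap. For $n_0\ll k<N$ the recursion in $(R)$ gives
\begin{equation*}
c_k^{(N)}=\sum_{n\le N}\alpha_k^n\overline{\alpha_{k+1}^n}
=\alpha_k^k\overline{\alpha_{k+1}^k}+\alpha_k^{k+1}\overline{\alpha_{k+1}^{k+1}}+O(k^{-3})
=\frac{\overline{a_{k+1}}}{\overline{f(k+1,k)}}\abs{\alpha_k^k}^2
+\frac{a_{k+1}}{f(k,k+1)}\abs{\alpha_{k+1}^{k+1}}^2+O(k^{-3}),
\end{equation*}
so your coefficient contains the $n=k$ contribution (the one kept in the paper's $K_4$) \emph{and} the $n=k+1$ contribution (which the paper's decomposition drops). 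Estimate \eqref{eq:diff} controls the increments of $a_{k+1}/f(k+1,k)$ only; about $f(k,k+1)$ the hypotheses give nothing beyond $\abs{f(k,k+1)}\gtrsim k$, so from the stated conditions you only obtain $\abs{c_k^{(N)}-c_{k+1}^{(N)}}\lesssim k^{-1}$, and your Abel summation then produces $\sum_k k^{\beta_p-1}$, which diverges. What is needed is the mirror estimate $\abs{a_{n+1}/f(n,n+1)-a_{n+2}/f(n+1,n+2)}\lesssim n^{-2}$ (equivalently, a near-cancellation such as $f(k+1,k)+f(k,k+1)=O(1)$). This does hold in both applications, where $f(k,n)$ is, up to bounded terms, an antisymmetric difference $g(n)-g(k)$ with $g$ quadratic (the situation of Lemma \ref{lem:fkn}), but it is not part of condition $(R)$ as stated. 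The paper never meets this difficulty only because \eqref{decomgen} silently discards the $(n-1,n)$ and $(n,n-1)$ pairs --- a defect of its own proof which your more careful bookkeeping brings to light; to complete your argument you must add this mirror condition to $(R)$ (or verify it in each application), after which your estimate of $c_k^{(N)}-c_{k+1}^{(N)}$ and the rest of the proof go through.
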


\begin{remark}
\label{re:thmain}
Note that the adjoint $\tilde\Phi_N^*$ of $\tilde\Phi_N$ has kernel 
$\tilde\Phi_N^*(x,y)=\dst\sum_{n=1}^{N+1}\ffi_n(x)\overline{\ffi_{n-1}(y)}$.
Thus, if condition $(B)$ holds, then for every $f\in L^p(\mu)$
$$
\norm{\tilde\Phi_N^* f}_{L^p(\mu)}\lesssim N^{\beta_{p'}}\norm{f}_{L^p(\mu)}.
$$
Condition $(B)$ may be replaced by a slightly weaker condition, see Remark \ref{rem:condB} below.

Also we state the various conditions with $1<p<\infty$. It is enough that they hold for
$p_1<p<p_1^{\prime}$ with $1<p_1<p_0$.
\end{remark}

The remaining of this section is devoted to the proof of this result.

\subsection{Step 1: A simple lemma and an extension of the Banach-Steinhaus Theorem}

We will here formalize a result that has already been used in \cite{BC}.
To start, let us state the following simple and well known lemma that we prove for sake of completeness:

\begin{lemma}
\label{lem:triv}
Let $1<p<\infty$ and let $K\,:\Omega\times\Omega\to\C$ be such that
$$
\norm{K}_{L^p(\mu)\otimes L^{p'}(\mu)}:=\left(\int_\Omega\left(\int_\Omega|K(x,y)|^{p'}\,\mathrm{d}\mu(y)\right)^{p/p'}
\mathrm{d}\mu(x)\right)^{1/p}<+\infty.
$$
Then the integral operator $K$ defined by
$$
Kf(x)=\int_\Omega K(x,y)f(y)\,\mathrm{d}y
$$
extends to a continuous operator $K\,: L^p\to L^p$ with norm 
$$
\norm{K}_{L^p(\mu)\to L^p(\mu)}\leq\norm{K}_{L^p(\mu)\otimes L^{p'}(\mu)}.
$$
\end{lemma}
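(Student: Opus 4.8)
The plan is to deduce everything from a single pointwise application of Hölder's inequality; the hypothesis is precisely the mixed-norm quantity that makes this work, so no deep input is needed. I would proceed as follows.

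First, fix $f\in L^p(\mu)$. Note that the assumption $\norm{K}_{L^p(\mu)\otimes L^{p'}(\mu)}<+\infty$ forces the inner quantity $\left(\int_\Omega|K(x,y)|^{p'}\,\d\mu(y)\right)^{1/p'}$ to be finite for $\mu$-a.e.\ $x$; in other words $K(x,\cdot)\in L^{p'}(\mu)$ for a.e.\ $x$. For such $x$, I would bound
$$
\abs{Kf(x)}\leq\int_\Omega|K(x,y)|\,|f(y)|\,\d\mu(y)\leq\left(\int_\Omega|K(x,y)|^{p'}\,\d\mu(y)\right)^{1/p'}\norm{f}_{L^p(\mu)},
$$
where the second inequality is Hölder's inequality in the variable $y$ with conjugate exponents $p'$ and $p$. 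In particular this already shows that $Kf(x)$ is well defined (the defining integral converges absolutely) for a.e.\ $x$.

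Second, I would raise the previous estimate to the power $p$ and integrate in $x$. The factor $\norm{f}_{L^p(\mu)}^p$ comes out of the integral, and the remaining term is, by definition, $\norm{K}_{L^p(\mu)\otimes L^{p'}(\mu)}^p$. Taking $p$-th roots yields
$$
\norm{Kf}_{L^p(\mu)}\leq\norm{K}_{L^p(\mu)\otimes L^{p'}(\mu)}\norm{f}_{L^p(\mu)},
$$
which is exactly the claimed bound. Since this holds for every $f\in L^p(\mu)$, the integral expression defines a bounded operator $K\,:L^p(\mu)\to L^p(\mu)$ with the stated norm control, so in particular $K$ is the continuous extension referred to in the statement.

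There is no genuine obstacle here: the only points requiring a word of care are the measurability and finiteness issues, namely that the inner $L^{p'}$-norm of $K(x,\cdot)$ is finite for a.e.\ $x$ (so that the pointwise Hölder step is legitimate) and that $x\mapsto Kf(x)$ is measurable. Both follow from the finiteness of the mixed norm together with Tonelli's theorem applied to the nonnegative function $|K|\,|f|$. Everything else is the two-line computation above.
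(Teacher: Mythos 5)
Your proof is correct and is essentially identical to the paper's: both apply H\"older's inequality in the $y$ variable with exponents $p'$ and $p$, raise to the power $p$, and integrate in $x$ to recover the mixed norm $\norm{K}_{L^p(\mu)\otimes L^{p'}(\mu)}$. Your added remarks on measurability and a.e.\ finiteness of the inner $L^{p'}$-norm are a minor refinement of details the paper leaves implicit, not a different argument.
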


\begin{proof}
Indeed, using H\"older's inequality,
\begin{eqnarray*}
\norm{Kf}_p^p&=&\int_\Omega\abs{\int_\Omega K(x,y)f(y)\,\mathrm{d}\mu(y)}^p\,\mathrm{d}\mu(x)\\
&\leq&\int_\Omega\left(\int_\Omega|K(x,y)|^{p'}\,\mathrm{d}\mu(y)\right)^{p/p'}\int_\Omega|f(y)|^{p}\,\mathrm{d}\mu(y)\,\mbox{d}\mu(x)\\
&=&\norm{K}_{L^p\bigl(\mu)\otimes L^{p'}(\mu)}^p\norm{f}_{L^p(\mu)}^p
\end{eqnarray*}
as claimed.
\end{proof}

With condition $(L)$ we can now make sense of $\Phi_Nf,\Psi_Nf$ for every $f\in L^p(\mu)$. Moreover,
according to the Banach-Steinhaus Theorem, the following are equivalent:
\begin{enumerate}
\renewcommand{\theenumi}{\roman{enumi}}
\item for every $f\in L^p(\mu)$, $\Phi_Nf\to f$ 
in $L^p(\mu)$;

\item there exists a dense set $\dd\subset L^p(\mu)$ such that, for every $f\in\dd$,
$\Phi_Nf\to f$ 
in $L^p(\mu)$ and for every $f\in L^p(\mu)$, $\|\Phi_Nf\|_{L^p(\mu)}\lesssim\|f\|_{L^p(\mu)}$.
\end{enumerate}

The statement hold of course with $\Phi_N$ replaced by $\Psi_N$.

Now, as we assume conditions $(D)$, $L^p$-convergence of $\Phi_Nf\to f$, $\Psi_Nf\to f$, is equivalent to the uniform boundedness of
$\norm{\Phi_N}_{L^p(\mu)\to L^p(\mu)}$, and $\norm{\Psi_N}_{L^p(\mu)\to L^p(\mu)}$.
But then, under condition $(C)$, the uniform boundedness of $\norm{\Phi_N}_{L^p(\mu)\to L^p(\mu)}$ holds if and only if
$p_0<p<p_0^{\prime}$. But if $\norm{\Phi_N-\Psi_N}_{L^p(\mu)\to L^p(\mu)}$ is uniformly bounded for every $p$, then
we also get that $\norm{\Psi_N}_{L^p(\mu)\to L^p(\mu)}$ is  uniformly bounded if and only if
$p_0<p<p_0^{\prime}$. We may summarize this discussion in the following proposition:

\begin{proposition}
With the notation of Section \ref{sec:mainth} and under conditions $(L)$, $(R)$, $(B)$, $(C)$ and $(D)$, the following are equivalent:
\begin{enumerate}
\renewcommand{\theenumi}{\roman{enumi}}
\item $\Psi_Nf\to f$
for every $f\in L^p(\Omega,\mu)$, with convergence in $L^p(\Omega,\mu)$, if and only if $p_0<p<p_0^\prime$.

\item for every $1<p<\infty$, there exists a constant $C$ such that, for every $N\geq 0$ and every $f\in L^p(\mu)$,
$$
\norm{\Phi_Nf-\Psi_Nf}_{L^p(\mu)}\leq C\norm{f}_{L^p(\mu)}.
$$
\end{enumerate}
\end{proposition}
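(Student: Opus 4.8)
My plan is to formalise the discussion immediately preceding the statement; the whole argument reduces to one functional-analytic principle followed by a triangle inequality. First I would observe that condition $(L)$, via Lemma \ref{lem:triv}, makes every $\Phi_N$ and every $\Psi_N$ a bounded operator on $L^p(\mu)$ for each $1<p<\infty$, so the operator norms $\norm{\Phi_N}_{L^p\to L^p}$ and $\norm{\Psi_N}_{L^p\to L^p}$ are meaningful. Next I would invoke the Banach--Steinhaus equivalence already recorded above: since condition $(D)$ supplies a set $\dd$, dense in every $L^p(\mu)$, on which both $\Phi_N f\to f$ and $\Psi_N f\to f$, the convergence-on-a-dense-set half of the criterion is automatic, and hence for each fixed $p$,
$$
\Phi_N f\to f\ \text{in}\ L^p(\mu)\ \text{for every}\ f \iff \sup_N\norm{\Phi_N}_{L^p\to L^p}<\infty,
$$
and the identical statement holds with $\Psi_N$ in place of $\Phi_N$. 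The role of $(D)$ is precisely to collapse the convergence question into a question of uniform boundedness of operator norms.

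With this in hand I would feed in condition $(C)$, which by the displayed equivalence says that $\sup_N\norm{\Phi_N}_{L^p\to L^p}<\infty$ holds if and only if $p_0<p<p_0^{\prime}$. The proof of (i)$\Leftrightarrow$(ii) is then a comparison of the two families through the splitting $\Psi_N=\Phi_N-(\Phi_N-\Psi_N)$. For (ii)$\Rightarrow$(i) I would assume $\sup_N\norm{\Phi_N-\Psi_N}_{L^p\to L^p}<\infty$ for every $p$ and argue on both sides of the critical interval: for $p\in(p_0,p_0^{\prime})$ the bound $\norm{\Psi_N}\le\norm{\Phi_N}+\norm{\Phi_N-\Psi_N}$ yields $\sup_N\norm{\Psi_N}<\infty$, hence convergence; while for $p\notin(p_0,p_0^{\prime})$ the reverse bound $\norm{\Phi_N}\le\norm{\Psi_N}+\norm{\Phi_N-\Psi_N}$ shows that uniform boundedness of $\Psi_N$ would force the same for $\Phi_N$, contradicting $(C)$. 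This establishes that $\Psi_N f\to f$ for all $f\in L^p(\mu)$ exactly when $p\in(p_0,p_0^{\prime})$, which is (i). For the reverse implication it suffices to note that on the critical interval (i) together with $(C)$ makes both $\Phi_N$ and $\Psi_N$ uniformly bounded, so their difference is bounded there; the remaining values of $p$ are covered once (ii) is established directly, which is what the standing conditions $(R)$ and $(B)$ are designed to achieve in the subsequent steps.

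The upshot, and the reason for isolating the proposition, is that the forward implication (ii)$\Rightarrow$(i) converts the main theorem into the single assertion (ii): a uniform $L^p$-bound on the difference kernel $\Phi_N-\Psi_N$. Within the proposition itself there is no real obstacle; the only point requiring care is the legitimate invocation of Banach--Steinhaus, which needs completeness of $L^p(\mu)$ and density of $\dd$, both guaranteed by $(L)$ and $(D)$. The genuine difficulty is deferred to proving (ii), i.e.\ the estimate $\norm{\Phi_N-\Psi_N}_{L^p\to L^p}\lesssim 1$, and it is there --- through conditions $(R)$ and $(B)$ and a Christoffel--Darboux-type rewriting of the difference kernel driven by the three-term recursion --- that I would expect the substantive work to lie.
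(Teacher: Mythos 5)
Your proposal is correct and follows essentially the same route as the paper: Banach--Steinhaus together with the dense set from $(D)$ reduces $L^p$-convergence to uniform boundedness of the operator norms, condition $(C)$ pins down the interval for $\Phi_N$, and the two-sided triangle inequality transfers this to $\Psi_N$. If anything, you are slightly more explicit than the paper about the converse implication (i)$\Rightarrow$(ii), correctly observing that outside the critical interval it rests on establishing (ii) directly from the standing hypotheses $(R)$ and $(B)$, which is exactly the work the paper defers to Step 3.
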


\subsection{Step 2: The behavior of the sequence $\alpha_k^n$}

\begin{lemma}
\label{lem:rec}
Let $n$ be an integer, $(a_k)$, $(f_k)$ be two sequences such that $a_1\not=0$, $|f_1|\lesssim  n^2$
for every $k$, $\dst|a_k|\leq\frac{1}{2}$, $|f_k|\gtrsim k|k-n|$.
Let
$(\alpha_k)_{k\geq0}$ be a sequence such that

-- $\dst\sum_{k=0}^\infty |\alpha_k|^2=1$;

-- $(\alpha_k)_{k\geq0}$ satisfies a three term
recursion formula
$$
f_k\alpha_k=a_k\alpha_{k-1}+a_{k+1}\alpha_{k+1}.
$$
Then there exists $\kappa,n_1$ depending only on the constants appearing in the above $\lesssim$ and $\gtrsim$ inequalities
such that, if $n\geq n_1$,
\begin{enumerate}
\renewcommand{\theenumi}{\roman{enumi}}
\item $|\alpha_0|\lesssim (\kappa n)^{3-n}$,

\item for $k\geq 1$, $|\alpha_k|\lesssim (Cn)^{-|k-n|}$

\item $|\alpha_n|^2=1-\eta$ with  $0<\eta\lesssim n^{-2}$.
\end{enumerate}
\end{lemma}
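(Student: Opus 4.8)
The plan is to read off all three statements from the single structural fact that, away from the exceptional index $k=n$, the coefficient $f_k$ is large, so that the rewritten recursion
\[
\alpha_k=\frac{1}{f_k}\bigl(a_k\alpha_{k-1}+a_{k+1}\alpha_{k+1}\bigr),\qquad k\neq n,
\]
forces each $\alpha_k$ to be small compared with its neighbours. Throughout I write $\beta_k=|\alpha_k|$; the normalisation $\sum_k\beta_k^2=1$ gives $\beta_k\le 1$ and $\beta_k\to 0$. I expect $\alpha_n$ to be the only ``large'' coefficient and the whole sequence to decay geometrically in $|k-n|$ away from it, the rate being governed by $1/|f_k|\lesssim 1/n$; statements (i)--(iii) will then be elementary consequences.

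For the right tail I would argue through the ratio $\rho_k=\alpha_{k+1}/\alpha_k$. Writing the recursion at index $k+1$ as $a_{k+1}\alpha_k=\alpha_{k+1}(f_{k+1}-a_{k+2}\rho_{k+1})$ gives the continued-fraction relation $\rho_k=a_{k+1}/(f_{k+1}-a_{k+2}\rho_{k+1})$. Since $\beta_k\to 0$, the ratios cannot all exceed $1$ eventually, so there are arbitrarily large $K$ with $|\rho_K|\le 1$; feeding this into the relation and using $|a_j|\le\frac12$ together with $|f_{k+1}|\ge 2$ (valid for $k\ge n$ once $n$ is large, since $|f_{k+1}|\gtrsim (k+1)|k+1-n|$) propagates the bound $|\rho_k|\le 1/|f_{k+1}|\le 1$ downward to every $k\ge n$. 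In particular $|\rho_{n+i}|\lesssim 1/|f_{n+i+1}|\lesssim 1/n$, so telescoping from $\beta_n\le 1$ yields $\beta_{n+j}\lesssim (C/n)^{j}$, which is (ii) for $k>n$ after adjusting the constant inside $(Cn)^{-|k-n|}$.

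For the left tail the mechanism is the same: for $1\le k\le n-1$ one has $|f_k|\gtrsim k(n-k)\gtrsim n$, so each interior $\alpha_k$ is dwarfed by its neighbours and the profile should again decay geometrically away from $k=n$, giving $\beta_{n-j}\lesssim (C/n)^{j}$. This is the step I expect to be the main obstacle: unlike the right end there is no $\ell^2$ condition at the finite boundary $k=0$ to seed the ratio estimate, and the stated lower bound on $|f_k|$ degenerates both at $k=n$ and at $k=0$. I would handle it with the discrete maximum principle supplied by the recursion itself: the inequality $|f_k|\beta_k\le\frac12(\beta_{k-1}+\beta_{k+1})$ shows that $\beta$ can have no interior local maximum at any $k$ with $|f_k|\ge 1$, hence at every $1\le k\le n-1$; this pins the peak at $k=n$ and controls the whole profile by $\beta_n$, after which one telescopes $\beta_k\le \frac{1}{2|f_k|}(\beta_{k-1}+\beta_{k+1})$ using $|f_k|\gtrsim n$ exactly as on the right. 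Care is needed precisely near $k=0$ and $k=n$, where the lower bound on $|f_k|$ is lost, and this is where the delicate factors in $(\kappa n)^{3-n}$ and $\eta\lesssim n^{-2}$ originate.

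Finally, (i) and (iii) are corollaries. For (i) I would use no information about $f_0$: the recursion at $k=1$ reads $a_1\alpha_0=f_1\alpha_1-a_2\alpha_2$, so with $a_1\neq 0$, $|f_1|\lesssim n^2$ and the bounds $\beta_1,\beta_2\lesssim (Cn)^{1-n}$ just obtained, $\beta_0\lesssim n^{2}(Cn)^{1-n}\lesssim(\kappa n)^{3-n}$ for a suitable $\kappa$. For (iii), the normalisation gives $\beta_n^2=1-\eta$ with $\eta=\sum_{k\neq n}\beta_k^2$; the geometric decay makes this sum dominated by its two largest terms $\beta_{n\pm1}^2\lesssim n^{-2}$, so $0<\eta\lesssim n^{-2}$, the strict positivity coming from the fact that a single-spike sequence cannot satisfy the recursion unless several of the $a_k$ and $f_n$ vanish.
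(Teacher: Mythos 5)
Your treatment of the right tail $k\geq n$ is correct and is a genuinely different mechanism from the paper's: the paper never forms ratios, it runs a direct induction on the exponent, proving $|\alpha_k|\leq(\kappa n)^{-\min(|k-n|,J)}$ for all $k\geq 1$ by bounding $|f_k||\alpha_k|\leq\tfrac12\left(|\alpha_{k-1}|+|\alpha_{k+1}|\right)$, while you propagate a continued-fraction estimate for $\rho_k=\alpha_{k+1}/\alpha_k$ downward from a seed at infinity (your version even gives the stronger decay $|\alpha_{n+j}|\lesssim C^j/(n^j j!)$). The only caveat there is that $\rho_k$ is undefined when $\alpha_k=0$, and the hypotheses do not exclude vanishing $\alpha_k$ (some $a_k$ may vanish); the paper's formulation, which only ever bounds products $|f_k||\alpha_k|$, is immune to this, so you would need a line disposing of zero denominators.

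The genuine gap is in the left tail, exactly at the spot you flag as ``the main obstacle'': your proposed fix does not close it. The discrete maximum principle shows there is no local maximum at any $k\in\{1,\dots,n-1\}$, hence that $\max_{0\leq k\leq n}|\alpha_k|=\max\left(|\alpha_0|,|\alpha_n|\right)$ --- it does \emph{not} pin the peak at $k=n$, and nothing in the hypotheses rules out a peak at $k=0$: the lower bound $|f_k|\gtrsim k|k-n|$ is vacuous at $k=0$, and the $k=0$ relation $f_0\alpha_0=a_1\alpha_1$ gives no control of $|\alpha_0|$ since $f_0$ has no lower bound. With only $\max\left(|\alpha_0|,|\alpha_n|\right)\leq 1$ as boundary data, your telescoping yields $|\alpha_k|\lesssim (Cn)^{-\min(k,\,n-k)}$, i.e.\ decay from \emph{both} ends toward the middle, which on the left half is far weaker than the claimed $(Cn)^{-(n-k)}$: it gives only $|\alpha_1|\lesssim n^{-1}$, $|\alpha_2|\lesssim n^{-2}$, and then your derivation of (i) produces $|\alpha_0|\lesssim n^2|\alpha_1|\lesssim n$, which is vacuous. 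The argument is in fact circular: the smallness of $|\alpha_0|$ is precisely what (i) asserts, you need it to run the left-tail estimate, and you need the left-tail estimate at $k=1,2$ to prove (i). Closing this requires an input beyond the maximum principle, e.g.\ using the $k=0$ equation to write the whole left portion as $\alpha_k=P_k\alpha_0$ with $P_0=1$, $P_{k+1}=(f_kP_k-a_kP_{k-1})/a_{k+1}$, and showing that $|f_k|\gtrsim n$ together with $|a_{k+1}|\leq\tfrac12$ forces $|P_k|$ to grow geometrically, so that $|\alpha_k|\leq 1$ near $k=n$ forces $|\alpha_0|$ to be exponentially small. (For what it is worth, the paper's own induction has the same soft spot: its hypothesis covers only $k\geq1$, so at $k=1$ the neighbour $\alpha_0$ is uncontrolled and the inductive step is silently incomplete there; you at least name the difficulty, but the maximum principle does not resolve it.)
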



\begin{proof}
First, as $\sum|\alpha_k|^2=1$, $|\alpha_k|\leq 1$ for every $k\geq 0$.

We will first prove the estimate for $k\geq 1$ and write $|f_k|\geq \kappa' k|n-k|\geq \kappa  n$, $\kappa =\kappa'/4$.
As $|a_k|\leq\frac{1}{2}$ for every $k$, we get $|f_k||\alpha_k|\leq 1$ thus
$|\alpha_k|\leq (\kappa n)^{-1}$ if $|k-n|\geq 1$.
Assume now that we have proven that, for $J\geq 1$ we have proven that, for every
$k\geq 1$,
$$
|\alpha_k|\leq (\kappa n)^{-\min(|k-n|,J)}.
$$
Then, if $|k-n|\geq J+1$,
$$
|f_k||\alpha_k|\leq (\kappa n)^{-J}.
$$
As $|f_k|\geq \kappa n$, we obtain
$$
|\alpha_k|\leq (\kappa n)^{-(J+1)}
$$
as claimed. This induction does not allow to estimate $\alpha_0$ for which we instead use the induction
formula in a rougher way: we assumed that there is a constant $\tilde C$ such that $|f_1|\leq \tilde Cn^2$
$$
|a_1||\alpha_0|\leq |f_1||\alpha_1|+\frac{1}{2}|\alpha_2|\leq \tilde C n^2 (\kappa n)^{1-n}
+\frac{1}{2}(\kappa n)^{2-n}\leq \left(\frac{\tilde C}{\kappa^2}+\frac{1}{2\kappa n}\right) (\kappa n)^{3-n}.
$$
from a bound of the form $|\alpha_0|\geq \kappa (\kappa n)^{3-n}$ follows.

Finally, if $n>\max(4,1/2\kappa)$
\begin{eqnarray*}
|\alpha_n|^2&=&1-\sum_{|k-n|\geq 1}|\alpha_k|^2
\geq 1-\kappa^2 (\kappa n)^{6-2n}-2\sum_{j\geq 1}(\kappa n)^{-2j}\\
&\geq&1-\kappa^2 (\kappa n)^{-2}-2(\kappa n)^{-2}\bigl(1-(\kappa n)^{-2}\bigr)\\
&\geq&1-(\kappa^2+4)(\kappa n)^{-2}
\end{eqnarray*}
as claimed.
\end{proof}

Let us now state what this lemma implies on $(\alpha_k^n)$ satisfying condition $(R)$.

According to Lemma \ref{lem:rec}, and up to replacing eventually $n_0$ by $\max(n_0,n_1)$,
we may assume that, if $n\geq n_0$,
\begin{enumerate}
\renewcommand{\theenumi}{\roman{enumi}}
\item for every $n$, $|\alpha_k^n|\lesssim k^{-2}$ (with a constant that depends on $n$);
\item if $n\geq n_0$,
\begin{enumerate}
\item $|\alpha_0^n|\lesssim n^{-2}$,

\item for $k\geq 1$, $|\alpha_k^n|\lesssim (\kappa n)^{-|k-n|}$

\item $|\alpha_n^n|^2=1-\eta_n$ with  $0<\eta_n\lesssim n^{-2}$.
\end{enumerate}
\end{enumerate}

Let us show that this implies that $(\psi_n)$ also satisfies condition $(L)$:

\begin{lemma}\label{lem:lppsi}
With the notation of Section \ref{sec:mainth} and under conditions $(L)$, $(R)$, the sequence $(\psi_n)_{n\geq 0}$ also satisfies condition $(L)$.
\end{lemma}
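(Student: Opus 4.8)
The plan is to expand each $\psi_n$ in the first basis, $\psi_n=\sum_{k\geq 0}\alpha_k^n\ffi_k$, and to control $\norm{\psi_n}_{L^p(\mu)}$ by comparing term by term against the $L^p$-bounds for the $\ffi_k$ furnished by condition $(L)$, using the decay estimates on $(\alpha_k^n)$ deduced from Lemma~\ref{lem:rec}. Before any computation, I would note that condition $(L)$ for $(\psi_n)$ splits into three requirements — that $\psi_n\in L^p(\mu)$ for all $p$, that $\norm{\psi_n}_{L^p(\mu)}\lesssim n^{\gamma_p}$ \emph{with the same exponent} $\gamma_p$ as for the $\ffi_n$, and the product bounds \eqref{eq:lp1}--\eqref{eq:lp2} — and that the third is a formal consequence of the second: once $\norm{\psi_n}_{L^p(\mu)}\lesssim n^{\gamma_p}$ and $\norm{\psi_n}_{L^{p'}(\mu)}\lesssim n^{\gamma_{p'}}$ are established, multiplying them yields $\norm{\psi_n}_{L^p(\mu)}\norm{\psi_n}_{L^{p'}(\mu)}\lesssim n^{\alpha_p}$, which is $\lesssim 1$ exactly on $(p_0,p_0')$ where $\alpha_p=0$. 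Thus the genuine content is the single-exponent growth bound.

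For membership, and for the finitely many indices $n<n_0$, I would use only the crude estimate $|\alpha_k^n|\lesssim k^{-2}$ (with a constant depending on $n$). Since $\norm{\ffi_k}_{L^p(\mu)}\lesssim k^{\gamma_p}$ for $k\geq 1$ with $\gamma_p<1$, the triangle inequality gives $\sum_{k}|\alpha_k^n|\,\norm{\ffi_k}_{L^p(\mu)}\lesssim\sum_k k^{\gamma_p-2}<\infty$, so the series $\sum_k\alpha_k^n\ffi_k$ converges absolutely in $L^p(\mu)$; identifying its limit with $\psi_n$ (via almost everywhere convergent subsequences of the $L^2$- and $L^p$-partial sums) shows $\psi_n\in L^p(\mu)$ for every $p$ and every $n$. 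In particular the desired growth bound holds trivially for $n<n_0$ after enlarging the implicit constant.

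The heart of the proof is the bound $\norm{\psi_n}_{L^p(\mu)}\lesssim n^{\gamma_p}$ for $n\geq n_0$, where the sharp estimates $|\alpha_0^n|\lesssim n^{-2}$ and $|\alpha_k^n|\lesssim(\kappa n)^{-|k-n|}$ ($k\geq 1$) are available. I would split $\norm{\psi_n}_{L^p(\mu)}\leq\sum_{k\geq 0}|\alpha_k^n|\,\norm{\ffi_k}_{L^p(\mu)}$ into three pieces. The term $k=0$ is $\lesssim n^{-2}\norm{\ffi_0}_{L^p(\mu)}\lesssim n^{-2}$; the diagonal term $k=n$ contributes $|\alpha_n^n|\,\norm{\ffi_n}_{L^p(\mu)}\lesssim n^{\gamma_p}$ since $|\alpha_n^n|\leq1$; and for the off-diagonal part I would set $m=|k-n|\geq1$ and use the subadditivity $(n+m)^{\gamma_p}\leq n^{\gamma_p}+m^{\gamma_p}$, valid because $0\leq\gamma_p\leq1$, to separate the tails $k<n$ and $k>n$. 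Taking $n$ large enough that $\kappa n\geq 2$, the geometric factor $(\kappa n)^{-m}$ summed against $n^{\gamma_p}$ and against $m^{\gamma_p}\leq m$ produces only $\lesssim n^{\gamma_p-1}$. Adding the three pieces gives $\norm{\psi_n}_{L^p(\mu)}\lesssim n^{\gamma_p}$.

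I expect the off-diagonal estimate to be the main obstacle: one must verify, uniformly in $n$, that the geometric decay $(\kappa n)^{-|k-n|}$ beats the polynomial growth $k^{\gamma_p}$ of the $\ffi_k$, so that the only contribution at the critical order $n^{\gamma_p}$ comes from the single diagonal term $k=n$. Once the growth bound is secured, combining it with the remark of the first paragraph yields the product bounds, and condition $(L)$ for $(\psi_n)$ follows with the same parameters $\gamma_p$, $\alpha_p$ and $p_0$ as for $(\ffi_n)$.
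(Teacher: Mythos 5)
Your proof is correct and follows essentially the same route as the paper: expand $\psi_n=\sum_k\alpha_k^n\ffi_k$, use the crude bound $|\alpha_k^n|\lesssim k^{-2}$ for membership and for $n<n_0$, then for $n\geq n_0$ split the triangle-inequality sum into the $k=0$, diagonal and off-diagonal pieces and beat the polynomial growth $k^{\gamma_p}$ with the geometric decay $(\kappa n)^{-|k-n|}$. Your explicit use of subadditivity $(n+m)^{\gamma_p}\leq n^{\gamma_p}+m^{\gamma_p}$ for the tail $k>n$ is in fact slightly more careful than the paper's one-line estimate, but it is the same argument.
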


\begin{proof}[Proof of Lemma \ref{lem:lppsi}]
We write $\dst\psi_n=\sum_{k=0}^\infty \alpha_k^n\ffi_k$ so that
$$
\norm{\psi_n}_{L^p(\mu)}\leq \sum_{k=0}^\infty |\alpha_k^n|\norm{\ffi_k}_{L^p(\mu)}
\lesssim\sum_{k=0}^{\infty} (1+k)^{-2+\gamma_p}<+\infty.
$$
Further, if $n\geq n_0$,
\begin{eqnarray*}
\norm{\psi_n}_{L^p(\mu)}
&\leq&|\alpha_0^n|\norm{\ffi_0}_{L^p(\mu)}+\sum_{k=1}^{n-1}|\alpha_k^n|\norm{\ffi_k}_{L^p(\mu)}+\norm{\ffi_n}_{L^p(\mu)}
+\sum_{k=n+1}^{\infty}|\alpha_k^n|\norm{\ffi_k}_{L^p(\mu)}\\
&\lesssim&\left(n^{-2}+2\sum_{k=1}^{\infty}(\kappa n)^{-|k-n|}+1\right)n^{\gamma_p}\lesssim n^{\gamma_p}
\end{eqnarray*}
as claimed.
\end{proof}

\subsection{Step 3: The decomposition of $\Psi_N$}

In order to prove the theorem, we need to decompose $\Psi_N$ in the basis $(\ffi_n)_{n\geq 0}$.

Recall that $\psi_n=\sum_{k=0}^\infty \alpha_k^n\ffi_n$ and that $(\alpha_k^n)$ satisfy $(R)$.

The decomposition of $\Psi_N$ is the following:
\begin{eqnarray}
\Psi_N(x,y)&=&\sum_{n=0}^N\psi_n(x)\overline{\psi_n(y)}
=\sum_{n=0}^{n_0}\psi_n(x)\overline{\psi_n(y)}+
\sum_{n=n_0+1}^N\sum_{k=0}^\infty\sum_{\ell=0}^\infty\alpha_k^n\alpha_{\ell}^n\ffi_k(x)\overline{\ffi_\ell(y)}\nonumber\\
&=&\Phi_N(x,y)+K_1(x,y)-K_2(x,y)-K_3(x,y)+K_4(x,y)+K_5(x,y)+K_6(x,y)
\label{decomgen} 
\end{eqnarray}
with
$$
K_1(x,y)=\sum_{n=0}^{n_0}\psi_n(x)\overline{\psi_n(y)}
\quad,\quad K_2(x,y)=\sum_{n=0}^{n_0}\ffi_n(x)\overline{\ffi_n(y)},
$$
$$
K_3(x,y)=\sum_{n=n_0+1}^N\eta_n\ffi_n(x)\overline{\ffi_n(y)},
$$
$$
K_4(x,y)=\sum_{n=n_0+1}^N\alpha_n^n\overline{\alpha_{n+1}^n}\ffi_n(x)\overline{\ffi_{n+1}(y)}
\quad,\quad K_5(x,y)=\sum_{n=n_0+1}^N\alpha_{n+1}^n\overline{\alpha_n^n}\ffi_{n+1}(x)\overline{\ffi_n(y)}
$$
and
$$
K_6(x,y)=\sum_{n=n_0+1}^N\sum_{|k-n|\geq 2}\sum_{|\ell-n|\geq 2}\alpha_k^n\alpha_{\ell}^n\ffi_k(x)\overline{\ffi_\ell(y)}.
$$
Let us write $K_jf(x)=\dst\int_\Omega K_j(x,y)f(y)\,\mbox{d}\mu(y)$ for the corresponding integral operators.
We want to bound $\norm{K_j}_{L^p(\mu)\to L^p(\mu)}$ independently from $N$.

According to Lemma \ref{lem:triv}
$$
\norm{K_1}_{L^p(\mu)\to L^p(\mu)}\leq C_1:=\sum_{n=0}^{n_0}\norm{\psi_n}_{L^p(\mu)}\norm{\psi_n}_{L^{p'}(\mu)}
$$
while
$$
\norm{K_2}_{L^p(\mu)\to L^p(\mu)}\leq C_2:=\sum_{n=0}^{n_0}\norm{\ffi_n}_{L^p(\mu)}\norm{\ffi_n}_{L^{p'}(\mu)}
$$
and these two quantities are finite according to condition $(L)$ and do not depend on $N$.

Further, thanks again to condition $(L)$,
$$
\norm{K_3}_{L^p\to L^p}\leq \sum_{n=n_0+1}^N|\eta_n|\norm{\ffi_n}_{L^p(\mu)}\norm{\ffi_n}_{L^{p'}(\mu)}
\lesssim\sum_{n=1}^{\infty}\frac{1}{n^{2-\alpha_p}}<+\infty.
$$

For $K_6$ we will use Lemma \ref{lem:rec} to see that, if $|k-n|\geq 2$, $|\alpha_k^n|\lesssim (\kappa n)^{-|k-n|}$ if $k\not=0$.
On the other hand $\norm{\ffi_k}_{L^p(\mu)}\lesssim k^{\gamma_p}$ with $\gamma_p<1$.
Note also that if we denote by $S_j=\dst\sum_{k=j}^\infty (\kappa n)^{-k}$ then $S_j\lesssim (\kappa n)^{-j}$. We then have to estimate
\begin{eqnarray*}
\sum_{k\geq n+2}|\alpha_k^n|\norm{\ffi_k}_{L^p(\mu)}&\lesssim& \sum_{j\geq 2}(n+j)^{\gamma_p}(\kappa n)^{-j}
=\sum_{j\geq 2}(n+j)^{\gamma_p}(S_j-S_{j+1})\\
&=&(n+2)^{\gamma_p}S_2+\sum_{j=3}^\infty\bigl((n+j)^{\gamma_p}-(n+j-1)^{\gamma_p}\bigl)S_j
\lesssim n^{\gamma_p-2}
\end{eqnarray*}
and as $|\alpha_0^n|\lesssim n^{-2}$,
$$
\sum_{0\leq k\leq n-2}|\alpha_k^n|\norm{\ffi_k}_{L^p(\mu)}\lesssim  n^{-2}+n^{\gamma_p}\sum_{j\geq 2}(\kappa n)^{-j}\lesssim n^{\gamma_p-2}.
$$
It follows that
\begin{eqnarray*}
\norm{K_6}_{L^p(\mu)\to L^p(\mu)}&\leq&
\sum_{n=n_0}^{\infty}\sum_{|k-n|\geq 2}\sum_{|\ell-n|\geq 2}|\alpha_k^n||\alpha_{\ell}^n|\norm{\ffi_k}_{L^p(\mu)}\norm{\ffi_\ell}_{L^{p'}(\mu)}\\
&\lesssim&\sum_{n=n_0}^{\infty}\sum_{|k-n|\geq 2}k^{\gamma_p}|\alpha_k^n|\sum_{|\ell-n|\geq 2}\ell^{\gamma_{p'}}|\alpha_{\ell}^n|\\
&\lesssim&\sum_{n=n_0+1}^{\infty}n^{-4+\alpha_p}<+\infty
\end{eqnarray*}
since $\alpha_p<1$.

The terms $K_4$ and $K_5$ are the most difficult to treat. As they are similar, we will only show $L^p$-boundedness of the first one.
To start, we use $(R)$ to rewrite
\begin{multline*}
K_4(x,y)=\sum_{n=n_0+1}^N\alpha_n^n\overline{\alpha_{n+1}^n}\ffi_n(x)\overline{\ffi_{n+1}(y)}\\
=\sum_{n=n_0+1}^N\frac{\overline{a_{n+1}}}{f(n+1,n)}|\alpha_n^n|^2\ffi_n(x)\overline{\ffi_{n+1}(y)}
+\sum_{n=n_0+1}^N\frac{\overline{a_{n+2}}}{f(n+1,n)}\alpha_n^n\overline{\alpha_{n+2}^n}\ffi_n(x)\overline{\ffi_{n+1}(y)}\\
=K_4^1(x,y)+K_4^2(x,y).
\end{multline*}
Now
\begin{eqnarray*}
\norm{K_4^2}_{L^p\bigl(\mu)\otimes L^{p'}(\mu)}^p&\lesssim&
\sum_{n=n_0+1}^N\frac{|a_{n+2}|}{|f(n+1,n)|}|\alpha_n^n||\alpha_{n+2}^n|\norm{\ffi_n}_{L^p(\mu)}\norm{\ffi_n}_{L^{p'}(\mu)}\\
&\lesssim& \sum_{n=n_0+1}^\infty n^{-3+\alpha_p}<+\infty.
\end{eqnarray*}
since $|a_{n+2}|\lesssim1$, $|f(n+1,n)|\gtrsim n$, $|\alpha_n^n|\leq 1$, $|\alpha_{n+2}^n|\lesssim n^{-2}$
and Property $(L^p)$.

Next, writing $\tilde\alpha_n=\dst\frac{\overline{a_{n+1}}}{f(n+1,n)}|\alpha_n^n|^2$ and using Abel summation,
we get
$$
K_4^1(x,y)=\sum_{n=n_0}^{N-1}\bigl(\tilde\alpha_n-\tilde\alpha_{n+1}\bigr)\tilde\Phi_n(x,y)
+\tilde\alpha_N\tilde\Phi_{N-1}(x,y)
$$

Note that $|\tilde\alpha_n|\lesssim n^{-1}$ so that, with $(B)$, $\norm{\tilde\alpha_N\tilde\Phi_{N-1}}_{L^p(\mu)\to L^p(\mu)}\lesssim 1$.
Further $|\alpha_{n+1}^{n+1}|^2=|\alpha_n^n|^2+\eta_n-\eta_{n+1}$ thus
$$
\tilde\alpha_{n+1}=\frac{\overline{a_{n+2}}}{f(n+2,n+1)}|\alpha_n^n|^2+O(n^{-3})
$$
since $|\eta_n-\eta_{n+1}|\lesssim n^{-2}$, $|a_{n+2}|\lesssim1$, $|f(n+2,n+1)|\gtrsim n$. Thus, using
\eqref{eq:diff} we get $|\tilde\alpha_n-\tilde\alpha_{n+1}|\lesssim n^{-2}$.
It follows that
$$
\norm{\sum_{n=n_0}^{N-1}\bigl(\tilde\alpha_n-\tilde\alpha_{n+1}\bigr)\tilde\Phi_n}_{L^p(\mu)\to L^p(\mu)}
\lesssim \sum_{n=n_0}^{N-1} n^{-2}\norm{\tilde\Phi_n}_{L^p(\mu)\to L^p(\mu)}
\lesssim \sum_{n=n_0}^\infty n^{-2+\beta_p}<+\infty.
$$
This shows that $K_4^1$ is also a bounded operator $L^p(\mu)\to L^p(\mu)$ with bound independent on $N$.
The proof for $K_5$ being similar, we conclude that each term in \eqref{decomgen} defines a bounded operator $L^p(\mu)\to L^p(\mu)$
with bound independent on $N$ and the proof of the theorem is complete.
\hfill$\Box$

\begin{remark}
\label{rem:condB}
By treating simultaneously the terms $K_4$ and $K_5$, it is enough to assume the following slightly weaker condition:
\begin{enumerate}
\item[$(B')$]
Let $\hat\Phi_N(x,y)=\dst\sum_{n=0}^N\ffi_n(x)\overline{\ffi_{n+1}(y)}+\ffi_{n+1}(x)\overline{\ffi_n(y)}$ and write also
$\hat\Phi_N$ for the corresponding integral operator. For every $1<p<\infty$, we assume that $\hat\Phi_N$
defines a bounded linear operator on $L^p(\mu)$ and that there exists $\beta_p<1$ such that,
for every $f\in L^p(\mu)$
$$
\norm{\hat\Phi_N f}_{L^p(\mu)}\lesssim N^{\beta_p}\norm{f}_{L^p(\mu)}.
$$
\end{enumerate}
\end{remark}

\section{Preliminaries and technical Lemmas}

In this section, we will gather some facts from the literature and some simple technical lemmas that will allow to easily establish the conditions of Theorem \ref{th:main}.

\subsection{Condition $(R)$}

\begin{lemma}
\label{lem:fkn}
Let $a,b,c,d,e,\ell\in\R$. Let $(e_{k,n})_{k,n\in\N}$ be a bounded sequence with $|e_{k,n}|\leq e$.
Let
$$
f(k,n)=(an+b)(cn+d)-(ak+b)(ck+d)+e_{k,n}.
$$
Let $(a_n)$ be a sequence such that $a_n=\ell+\tilde a_n$ with $|\tilde a_n|\lesssim n^{-1}$.
Then there exists $n_0$ such that
\begin{enumerate}
\renewcommand{\theenumi}{\roman{enumi}}
\item for fixed n, there exists $k_n$ such that, if $k\geq k_n$, $|f(k,n)|\geq\dst \frac{ac}{2}k^2$;

\item if $n\geq n_0$, $|f(k,n)|\geq\dst\frac{ac}{2}n|n-k|$;

\item if $n\geq n_0$, $\dst\abs{\frac{a_{n+1}}{f(n+1,n)}-\frac{a_{n+2}}{f(n+2,n+1)}}\lesssim n^{-2}$.
\end{enumerate}
\end{lemma}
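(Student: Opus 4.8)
The plan is to reduce all three parts to a single algebraic simplification. Expanding the quadratic $P(t):=(at+b)(ct+d)=ac\,t^2+(ad+bc)t+bd$, we have $P(n)-P(k)=(n-k)\bigl(ac(n+k)+(ad+bc)\bigr)$, so that
$$
f(k,n)=(n-k)\bigl(ac(n+k)+(ad+bc)\bigr)+e_{k,n}=-ac\,k^2-(ad+bc)k+ac\,n^2+(ad+bc)n+e_{k,n}.
$$
(Throughout I assume $ac>0$, as the lower bounds $\frac{ac}{2}k^2$ and $\frac{ac}{2}n\abs{n-k}$ presuppose.) With this factorization in hand, each claim becomes a matter of tracking which of the factors $n-k$ and $ac(n+k)+(ad+bc)$ dominates.

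For (i) I would fix $n$ and read $f(k,n)$ as a polynomial in $k$: the leading term is $-ac\,k^2$, while the remainder $-(ad+bc)k+ac\,n^2+(ad+bc)n+e_{k,n}$ is $O(k)$ as $k\to\infty$ (with an $n$-dependent constant, and the bounded term $e_{k,n}$ harmlessly absorbed). Hence there is a threshold $k_n$ beyond which this remainder is at most $\frac{ac}{2}k^2$ in absolute value, giving $\abs{f(k,n)}\geq\frac{ac}{2}k^2$. For (ii) I would use the factored form directly. The case $k=n$ is trivial since both sides then reduce to the bounded term, so assume $k\neq n$, whence $\abs{n-k}\geq 1$. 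Because $k\geq 0$ and $ac>0$, $\abs{ac(n+k)+(ad+bc)}\geq ac\,n-\abs{ad+bc}$; choosing $n_0$ so large that $ac\,n-\abs{ad+bc}\geq\frac34 ac\,n$ for $n\geq n_0$ yields $\abs{(n-k)\bigl(ac(n+k)+(ad+bc)\bigr)}\geq\frac34 ac\,n\abs{n-k}$, and enlarging $n_0$ further so that $\frac14 ac\,n\geq e$ lets the bounded remainder be absorbed (using $\abs{n-k}\geq1$), giving $\abs{f(k,n)}\geq\frac{ac}{2}n\abs{n-k}$.

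The delicate part, and the reason the lemma fixes this precise form of $f$, is (iii). Evaluating the factorization at the two relevant index pairs gives $f(n+1,n)=-2ac\,n+A_n$ and $f(n+2,n+1)=-2ac\,n+B_n$, where $A_n=-ac-(ad+bc)+e_{n+1,n}$ and $B_n=-3ac-(ad+bc)+e_{n+2,n+1}$ are both bounded in $n$; the crucial observation is that the two denominators share the same leading term $-2ac\,n$. I would then place the difference over a common denominator,
$$
\frac{a_{n+1}}{f(n+1,n)}-\frac{a_{n+2}}{f(n+2,n+1)}=\frac{a_{n+1}f(n+2,n+1)-a_{n+2}f(n+1,n)}{f(n+1,n)f(n+2,n+1)}.
$$
By (ii) each factor of the denominator is $\gtrsim n$, so the denominator is $\gtrsim n^2$. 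Substituting $a_{n+1}=\ell+\tilde a_{n+1}$ and $a_{n+2}=\ell+\tilde a_{n+2}$, the numerator becomes $\ell(B_n-A_n)+\tilde a_{n+1}f(n+2,n+1)-\tilde a_{n+2}f(n+1,n)$. Here the potentially dangerous $O(n)$ contributions all disappear: the leading $-2ac\,n$ cancels in $B_n-A_n$, leaving an $O(1)$ quantity, while each remaining term is a product $O(n^{-1})\cdot O(n)=O(1)$ thanks to $\abs{\tilde a_n}\lesssim n^{-1}$. Thus the numerator is $O(1)$ and the whole expression is $O(n^{-2})$. The main obstacle is therefore exactly this bookkeeping: one must notice that the leading term of $f$ at the consecutive pairs is identical, so that a naive $O(n)/O(n^2)=O(n^{-1})$ estimate is upgraded to $O(n^{-2})$ through exact cancellation together with the $n^{-1}$ decay of $\tilde a_n$.
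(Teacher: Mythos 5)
Your proof is correct and follows essentially the same route as the paper's: the same expansion of the quadratic difference (you work with the factored form $(n-k)\bigl(ac(n+k)+ad+bc\bigr)$ where the paper substitutes $k=n\pm p$, a cosmetic difference), the same absorption of the bounded terms $e_{k,n}$ for parts (i) and (ii), and for part (iii) the identical common-denominator argument exploiting the cancellation of the shared leading term $-2acn$ together with the $O(n^{-1})$ decay of $\tilde a_n$. There are no gaps.
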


\begin{proof} Up to replacing $f$ by $-f$ we may assume that $ac>0$.
The first part is trivial as, for fixed $n$, $\dst k^{-2}f(k,n)\to ac$ when $k\to\infty$.

For the second part, the result is trivial for $k=n$ so let us first consider the case $k>n$ and write $k=n+p$, $p\geq 1$.
Now
\begin{eqnarray*}
f(n+p,n)&=&-acp^2-p\bigl(2acn+cb+ad\bigr)+e_{n+p,n}\\
&=&-\frac{ac}{2}pn -p\left(ac(n+p)+\frac{ac}{4}n+cb+ad\right)-\left(\frac{ac}{4}pn-e_{n+p,n}\right).
\end{eqnarray*}
Now $ac(n+p)\geq 0$, $\dst\frac{ac}{4}n+cb+ad\geq 0$ if $n\geq -\frac{4(cb+ad)}{ac}$
and $\frac{ac}{4}pn-e_{n+p,n}\geq \frac{ac}{4}n-e\geq 0$ if $n\geq\frac{4e}{ac}$.
It follows that, if $n$ is large enough $f(n+p,n)\leq\dst-\frac{ac}{2}pn$.

Let us now turn to the case $0\leq k< n$ and write
$k=n-p$ with $1\leq p\leq n$. Then
\begin{eqnarray*}
f(n-p,n)&=&-acp^2+p\bigl(2acn+cb+ad\bigr)+e_{n-p,n}\\
&\geq&\frac{ac}{2}np+p\left(ac(n-p)+\frac{ac}{4}n+cb+ad\right)+ \frac{ac}{4}np+e_{n-p,n}.
\end{eqnarray*}
Now, $n\geq p$ thus $ac(n-p)\geq 0$, and the two other terms are treated as previously.

For the last assertion, first write
$$
\frac{a_{n+1}}{f(n+1,n)}-\frac{a_{n+2}}{f(n+2,n+1)}=\frac{a_{n+1}f(n+2,n+1)-a_{n+2}f(n+1,n)}{f(n+1,n)f(n+2,n+1)}.
$$
Next, note that $f(n+1,n)=-2acn+f_n$ with $f_n=-(ac+cb+ad)+e_{n+1,n}$ a bounded sequence, $|f_n|\leq f:=|ac+cb+ad|+e$
while $a_n=\ell+\tilde a_n$ with $|\tilde a_n|\leq Cn^{-1}$. But then
\begin{multline*}
a_{n+1}f(n+2,n+1)-a_{n+2}f(n+1,n)\\=(\ell+\tilde a_{n+1})(-2acn-2ac+f_{n+1})-(\ell+\tilde a_{n+2})(-2acn+f_{n})\\
=2acn(\tilde a_{n+2}-\tilde a_{n+1})+(\ell+\tilde a_{n+1})(-2ac+f_{n+1})-(\ell+\tilde a_{n+2})f_n
\end{multline*}
which is bounded by $F:=ac(6C+2|\ell|)+2(|\ell|+C)f$. As for $n\geq f/ac$, $f(n+1,n)\leq -acn$ we obtain
$$
\abs{\frac{a_{n+1}}{f(n+1,n)}-\frac{a_{n+2}}{f(n+2,n+1)}}\leq \frac{F}{(ac)^2}n^{-2}
$$
as claimed.
\end{proof}

\subsection{A simple criteria for condition $(D)$}

In the examples we have in mind, condition $(D)$ will be very easy to check. Indeed, it will fall in the scope of the following simple lemma:

\begin{lemma}
\label{lem:condD}
Assume that the following conditions hold:
\begin{enumerate}
\renewcommand{\theenumi}{\roman{enumi}}
\item $\Omega\subset\R^d$ is an open set and the set of smooth compactly supported functions $\cc^\infty_c(\Omega)$ is dense in every
$L^p(\Omega)$, $1<p<\infty$;

\item there exists a differential operators $L$ (resp. $\tilde L$) such that each $\ffi_n$ (resp. $\psi_n$'s) is an eigenfunctions of $L$ (resp. $\tilde L$);

\item writing $L\ffi_n=\lambda_n\ffi_n$ (resp. $\tilde L\psi_n=\tilde\lambda_n\psi_n$) we further assume that
there is an $\alpha>0$ (resp. $\tilde\alpha>0$) such that
$\lambda_n\gtrsim n^\alpha$ (resp. $\tilde\lambda_n\gtrsim n^\alpha$) when $n$ is big enough;

\item $(\ffi_n)$ (resp. $\psi_n$) satisfy condition $(L)$.
\end{enumerate}

Under the above conditions, $\Phi_N f\to f$ (resp. $\Psi_N f\to f$) in $L^p(\mu)$ for every $f\in\cc^\infty_c(\Omega)$.
\end{lemma}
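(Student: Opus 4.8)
The plan is to use the eigenvalue growth in hypothesis (iii) to show that, for a test function $f$, the coefficients $\scal{f,\ffi_n}_{L^2(\mu)}$ decay faster than any power of $n$, and then to sum the tail of the expansion directly in $L^p(\mu)$, controlling $\norm{\ffi_n}_{L^p(\mu)}$ by condition $(L)$. I treat only $(\ffi_n)$ and $\Phi_N$; the argument for $(\psi_n)$, $\tilde L$ and $\Psi_N$ is word-for-word the same, since hypothesis (iv) grants that $(\psi_n)$ also satisfies $(L)$.

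First I would fix $f\in\cc^\infty_c(\Omega)$ and an integer $m\geq1$. Since $L$ is a (formally self-adjoint) Sturm--Liouville operator and $f$ is compactly supported inside the open set $\Omega$, repeated integration by parts produces no boundary terms, so that $\scal{f,L^m\ffi_n}_{L^2(\mu)}=\scal{L^mf,\ffi_n}_{L^2(\mu)}$. Together with $L\ffi_n=\lambda_n\ffi_n$ (the $\lambda_n$ being real, as eigenvalues of a self-adjoint operator admitting an orthonormal eigenbasis) this yields, for $n$ large enough that $\lambda_n\neq0$,
$$
\scal{f,\ffi_n}_{L^2(\mu)}=\lambda_n^{-m}\scal{L^mf,\ffi_n}_{L^2(\mu)}.
$$
Now $L^mf$ is again smooth and compactly supported, hence lies in $L^2(\mu)$, and $\norm{\ffi_n}_{L^2(\mu)}=1$, so Cauchy--Schwarz gives $\abs{\scal{L^mf,\ffi_n}_{L^2(\mu)}}\leq\norm{L^mf}_{L^2(\mu)}$. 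Invoking (iii), i.e.\ $\lambda_n\gtrsim n^\alpha$ for large $n$, I obtain $\abs{\scal{f,\ffi_n}_{L^2(\mu)}}\lesssim n^{-\alpha m}$, with an implied constant depending on $f$ and $m$.

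Next I estimate the $L^p(\mu)$-tail. Writing $f-\Phi_Nf=\sum_{n>N}\scal{f,\ffi_n}_{L^2(\mu)}\ffi_n$ (convergence in $L^2(\mu)$ since $(\ffi_n)$ is an orthonormal basis), the triangle inequality and condition $(L)$, namely $\norm{\ffi_n}_{L^p(\mu)}\lesssim n^{\gamma_p}$ with $\gamma_p<1$, give
$$
\norm{\sum_{n>N}\scal{f,\ffi_n}_{L^2(\mu)}\ffi_n}_{L^p(\mu)}\leq\sum_{n>N}\abs{\scal{f,\ffi_n}_{L^2(\mu)}}\,\norm{\ffi_n}_{L^p(\mu)}\lesssim\sum_{n>N}n^{-\alpha m+\gamma_p}.
$$
Choosing $m$ so large that $\alpha m-\gamma_p>1$, the right-hand series converges, so $\sum_n\scal{f,\ffi_n}_{L^2(\mu)}\ffi_n$ converges in $L^p(\mu)$ to some $g$ and its tail tends to $0$; it remains only to identify $g$ with $f$. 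Since $\Phi_Nf\to f$ in $L^2(\mu)$ and $\Phi_Nf\to g$ in $L^p(\mu)$, extracting first an a.e.-convergent subsequence from the $L^2$-convergence and then a further a.e.-convergent subsequence from the $L^p$-convergence forces $f=g$ $\mu$-a.e. Thus $\Phi_Nf\to f$ in $L^p(\mu)$ for every $f\in\cc^\infty_c(\Omega)$, which, together with the density in (i), is exactly condition $(D)$.

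The only genuinely delicate point is the integration-by-parts identity $\scal{f,L^m\ffi_n}=\scal{L^mf,\ffi_n}$: it is not literally listed among the hypotheses, but it holds in each of our applications because $L$ is a formally self-adjoint (Sturm--Liouville) operator and $f$ has compact support in the interior of $\Omega$, so no boundary terms survive. Everything else reduces to the elementary summation of a convergent power-tail.
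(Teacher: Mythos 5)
Your proof is correct and follows essentially the same route as the paper: integration by parts against the differential operator gives super-polynomial decay of the coefficients $\scal{f,\ffi_n}_{L^2(\mu)}$, the tail of the expansion is then summed absolutely in $L^p(\mu)$ via condition $(L)$, and the $L^p$-limit is identified with the $L^2$-limit $f$. The only cosmetic difference is that the paper works with the formal adjoint, writing $\scal{\ffi_n,f}_{L^2(\mu)}=\lambda_n^{-k}\scal{\ffi_n,(L^*)^kf}_{L^2(\mu)}$, so no self-adjointness is needed, whereas you assume $L$ is formally self-adjoint --- a harmless restriction which, as you note, holds in the paper's applications.
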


\begin{proof} Indeed, if $f\in\cc^\infty_c(\Omega)$ and $n$ is big enough,
$$
\scal{\ffi_n,f}_{L^2(\mu)}=\frac{1}{\lambda_n}\scal{L\ffi_n,f}_{L^2(\mu)}
=\frac{1}{\lambda_n}\scal{\ffi_n,L^*f}_{L^2(\mu)}
=\frac{1}{\lambda_n^k}\scal{\ffi_n,(L^*)^kf}_{L^2(\mu)}
$$
by induction on $k$. But then
$|\scal{\ffi_n,f}_{L^2(\mu)}|\lesssim n^{-k\alpha}\norm{(L^*)^k f}_{L^2(\mu)}$.
As $\norm{\ffi_n}_{L^p(\mu)}\lesssim n^{\alpha_p}$ it is enough to take $k$ big enough to have $-k\alpha+\alpha_p<-1$
to see that
$$
\sum_{n\geq 0}\scal{\ffi_n,f}_{L^2(\mu)}\ffi_n
$$
converges in $L^p(\mu)$. As the limit of this series in $L^2(\mu)$ is $f$, so is the limit in $L^p(\mu)$.
The proof for $\psi_n$ is the same.
\end{proof}

\subsection{The Hilbert transform on weighted $L^p$ spaces}

In this section $1<p<\infty$.

First, let us recall that $\omega\,: J\to\R_+$ ($J$ an interval) is a Muckenhaupt $A^p$ weight if
$$
\ent{\omega}_{A^p}:=\left(\frac{1}{|K|}\int_K\omega(x)\,\mbox{d}x\right)\left(\frac{1}{|K|}\int_K\omega(x)^{-\frac{p'}{p}}\,\mbox{d}x\right)<+\infty
$$
where the supremum is taken over all intervals $K\subset J$.
The quantity $\ent{\omega}_{A^p}$ is called the $A^p$-characteristic of $\omega$ (or $A^p$ norm, though it is not a norm).

Let us recall that the Hilbert transform is defined as
$$
\hh f(x)=\frac{1}{\pi}\int_J\frac{f(y)}{x-y}\,\mathrm{d}y
$$
where the integral has to be taken in the principal value sense.

Hunt, Muckenhaupt and Wheeden \cite{HMW} proved that the Hilbert transform extends into a bounded linear operator
$L^p(J,\omega)\to L^p(J,\omega)$ if and only if $\omega$ is an $A^p$ weight and the sharp dependence
on the $A^p$ characteristic has been obtained by Petermichl:

\begin{theorem}[Petermichl] Let $1<p<+\infty$, $J$ an interval
and let $\omega$ be an $A_p$ weight, then
\begin{equation}
\label{eq:hilweight}
\norm{\hh}_{L^p(J,\omega)\to L^p(J,\omega)}\lesssim \ent{\omega}_{A^p}^{\max(1,(p-1)^{-1})}.
\end{equation}
\end{theorem}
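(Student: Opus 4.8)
The plan is to deduce the sharp bound from the now-standard theory of \emph{sparse domination}, which replaces the non-local singular integral $\hh$ by positive, local averaging operators for which the weighted estimate is accessible with the correct exponent. I would first simplify the problem by duality: since $\hh^*=-\hh$, the operator norm on $L^p(J,\omega)$ equals that on $L^{p'}(J,\sigma)$ with $\sigma=\omega^{1-p'}$, and a direct computation using $(p-1)(p'-1)=1$ gives $\ent{\sigma}_{A^{p'}}=\ent{\omega}_{A^p}^{1/(p-1)}$. Because $\max(1,(p-1)^{-1})=1$ for $p\geq 2$ and equals $1/(p-1)$ for $p<2$, this shows it is enough to establish the \emph{linear} bound $\norm{\hh}_{L^q(\nu)\to L^q(\nu)}\lesssim\ent{\nu}_{A^q}$ for every weight $\nu$ and every $q\geq 2$; the range $1<p<2$ then follows by taking $q=p'$ and $\nu=\sigma$.

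\emph{Step 1 (sparse domination).} The first step is to show that for nice $f,g$ (bounded, compactly supported) there is a sparse family $\ss$ of intervals --- a collection in which each $Q\in\ss$ carries a set $E_Q\subset Q$ with $|E_Q|\geq\frac12|Q|$, the $E_Q$ being pairwise disjoint --- such that
\[
\abs{\scal{\hh f,g}}\lesssim\sum_{Q\in\ss}\scal{|f|}_Q\scal{|g|}_Q|Q|,
\qquad \scal{h}_Q=\frac{1}{|Q|}\int_Q h.
\]
For the Hilbert transform this is most transparent through Petermichl's representation of $\hh$ as an average, over randomly shifted and dilated dyadic grids $\dd$, of the dyadic shift $\mathrm{Sh}_\dd f=\sum_{I\in\dd}\scal{f,h_I}(h_{I_+}-h_{I_-})$ built from Haar functions $h_I$ on the children $I_\pm$; each such shift is dominated by a sparse form through a stopping-time argument on its Haar coefficients.

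\emph{Step 2 (sharp bound for sparse forms).} It then suffices to bound the positive sparse form $\sum_{Q\in\ss}\scal{|f|}_Q\scal{|g|}_Q|Q|$ on $L^q(\nu)\times L^{q'}(\nu)$ by $\ent{\nu}_{A^q}\norm{f}_{L^q(\nu)}\norm{g}_{L^{q'}(\nu)}$ for $q\geq 2$. Writing the averages against the dual weight $\sigma=\nu^{1-q'}$ and applying Hölder's inequality, the estimate reduces to the Carleson embedding theorem; the crucial input is that an $A^q$ weight automatically satisfies a Fujii--Wilson $A^\infty$ condition with characteristic $\lesssim\ent{\nu}_{A^q}$, and it is exactly this that verifies the Carleson testing condition and yields the linear dependence on $\ent{\nu}_{A^q}$, sharp at $q=2$. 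Combining Steps 1 and 2 gives the inequality for $q\geq 2$, and the duality of the first paragraph closes the case $1<p<2$.

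\emph{Main obstacle.} The genuinely hard point is Step 1: controlling a non-local, oscillatory singular integral by positive, \emph{local} averages over a sparse family. Establishing the dyadic-shift representation together with the uniform sparse control of the shifts --- equivalently, the linear $A^2$ bound for $\mathrm{Sh}_\dd$, which was Petermichl's original Bellman-function theorem --- is the technical heart, whereas once sparse domination is available Step 2 is comparatively soft. I would therefore either invoke Lerner's general sparse-domination theorem for Calderón--Zygmund operators, or reproduce Petermichl's dyadic-shift argument specialized to the kernel $1/(x-y)$.
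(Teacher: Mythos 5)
The first thing to say is that the paper does not prove this statement: it is quoted as Petermichl's theorem with a citation to \cite{Pe} and is used purely as a black box (it enters only through the weighted estimates for the operators $\Omega_3,\Omega_4$ in Section 5). So the relevant comparison is with Petermichl's published proof, not with anything in this paper. Her argument represents $\hh$ as an average of Haar shifts over randomly translated/dilated dyadic grids, proves the linear $A^2$ bound for the shifts by a Bellman-function argument, and reaches general $p$ via the sharp form of Rubio de Francia extrapolation. Your proposal follows the later sparse-domination paradigm instead: dominate $\hh$ by positive sparse forms, prove the sharp weighted bound for those, and recover $1<p<2$ by duality from the linear bound for all $q\ge 2$. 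The architecture is sound: the duality step is correct (with $\sigma=\omega^{1-p'}$ one has $\ent{\sigma}_{A^{p'}}=\ent{\omega}_{A^p}^{1/(p-1)}$ and $\hh^*=-\hh$), sparse domination of the Hilbert transform is a theorem of Lerner and Lacey et al.\ that you may legitimately invoke, and the sharp bound for sparse forms is due to Cruz-Uribe, Martell and P\'erez. Your route avoids both the Bellman machinery and extrapolation, at the price of invoking sparse domination, which is deeper than anything this paper needs.

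Two caveats on the details, both at exactly the places where the sharpness of the exponent lives. First, in Step 2 the phrase ``applying H\"older's inequality, the estimate reduces to the Carleson embedding theorem'' hides the delicate point: if you apply H\"older symmetrically and pay the Fujii--Wilson constants of $\nu$ and $\sigma$ multiplicatively, you get $\ent{\nu}_{A^2}^{3/2}$ at $q=2$, not the linear bound. The sharp constant needs either the Cruz-Uribe--Martell--P\'erez argument (at $q=2$: $\sigma\nu=1$ pointwise, so $|E_Q|\le\sigma(E_Q)^{1/2}\nu(E_Q)^{1/2}$, then disjointness of the $E_Q$ and the universal $L^2(\sigma)$-boundedness of the weighted dyadic maximal function, with no $A^\infty$ input at all), or the Hyt\"onen--P\'erez bookkeeping in which the two $A^\infty$ constants enter \emph{additively}, giving $\ent{\nu}_{A^q}^{1/q}\bigl(\ent{\nu}_{A^\infty}^{1/q'}+\ent{\sigma}_{A^\infty}^{1/q}\bigr)$. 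Second, your parenthetical ``equivalently, the linear $A^2$ bound for $\mathrm{Sh}_\dd$'' is not an equivalence: sparse domination of the shifts is strictly stronger. If all you establish is Petermichl's linear $A^2$ bound for the shifts, then averaging gives only the $q=2$ case of your scheme, Step 2 has nothing to act on for $q>2$, and you would need sharp extrapolation (Dragi\v{c}evi\'c--Grafakos--Pereyra--Petermichl) --- an ingredient absent from your outline --- to reach $\max\bigl(1,(p-1)^{-1}\bigr)$ for all $p$. Neither caveat is fatal, since every result you need is citable, but as written the proposal under-prices the two steps that actually produce the sharp exponent.
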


Let us now estimate some $A^p$ characteristics that we will need in the sequel, when considering the Hankel prolates:

\begin{lemma}
\label{lem:ap}
Let $1<p<\infty$, $,\alpha\in\R$ and $\mu\geq1$. Let $\omega_{\alpha,\pm}$ be defined by
\begin{equation}
\label{eq:defomega34}
\omega_{\alpha,\pm}(x)=x^{\alpha}\big(|c\sqrt{x}-\mu|+\mu^{\frac{1}{3}}\big)^{\pm\frac{p}{4}}.
\end{equation}
Then $x^\alpha\in A^p[0,1]$ if and only if $x^\alpha\in A_p[1,+\infty]$ if and only if $-1<\alpha<p-1$. 
Moreover, $\omega_{\alpha,\pm}\in A^p[0,+\infty]$ if $-1+\frac{p}{8}<\alpha<\dst\frac{7}{8}p-1$ and in this case
$$
\ent{\omega_{\alpha,\pm}}_{A^p}\lesssim\begin{cases} 1&\mbox{if }\frac{4}{3}<p<4\\
\mu^{3/4}&\mbox{otherwise}\end{cases}
$$
with the implied constant depending on $\alpha$.
\end{lemma}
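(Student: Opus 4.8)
The goal is to estimate the $A^p$-characteristic of the weights $\omega_{\alpha,\pm}(x)=x^\alpha\big(|c\sqrt{x}-\mu|+\mu^{1/3}\big)^{\pm p/4}$ on $[0,+\infty)$. The overall strategy is to reduce the study of the full weight to the study of its two factors separately, and to treat the troublesome factor $\big(|c\sqrt{x}-\mu|+\mu^{1/3}\big)^{\pm p/4}$ by localizing around the point $x_0=\mu^2/c^2$ where $c\sqrt{x}-\mu$ vanishes. First I would recall the elementary but standard facts that a product of two $A^p$ weights need not be $A^p$, but that power weights and their perturbations can be handled directly by estimating the relevant averages. The cleanest route is to split $[0,+\infty)$ into the near-zero region, the transition region around $x_0$, and the region away from both; on each I would bound the two averages in the definition of $\ent{\omega}_{A^p}$ over an arbitrary subinterval $K$.

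\emph{The pure power weight.} I would first dispose of the claim that $x^\alpha\in A^p$ on $[0,1]$ (equivalently on $[1,\infty)$) precisely for $-1<\alpha<p-1$. This is the classical power-weight computation: for an interval $K$ one compares $\frac{1}{|K|}\int_K x^\alpha\d x$ and $\frac{1}{|K|}\int_K x^{-\alpha p'/p}\d x$, the worst case being intervals abutting the origin. Integrability of $x^\alpha$ near $0$ forces $\alpha>-1$, and integrability of $x^{-\alpha p'/p}$ forces $-\alpha p'/p>-1$, i.e. $\alpha<p-1$; a direct evaluation on $K=[0,t]$ shows the product of the two averages stays bounded exactly in this range. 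I would record this both as the stated equivalence and as the input for the near-zero control of $\omega_{\alpha,\pm}$.

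\emph{The perturbation factor.} The heart of the lemma is the factor $g(x)=\big(|c\sqrt{x}-\mu|+\mu^{1/3}\big)^{\pm p/4}$. The key observation is that the additive regularizer $\mu^{1/3}$ prevents $g$ from ever vanishing or blowing up, so that $g$ behaves on $[0,\infty)$ like a power of the distance to $x_0$ that is truncated at scale comparable to $\mu^{-2/3}$ near $x_0$ (after the change of variable $u=c\sqrt{x}$). Concretely, I would change variables to $u=c\sqrt{x}$, turning $x^\alpha\d x$ into a power weight in $u$ and turning $g$ into $\big(|u-\mu|+\mu^{1/3}\big)^{\pm p/4}$, a genuine truncated power weight centered at $u=\mu$. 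For such a truncated power weight the $A^p$-characteristic is computed by comparing the behavior of $|u-\mu|^{\pm p/4}$ on intervals at distance $\gtrsim\mu^{1/3}$ from $\mu$ (where the truncation is invisible) against intervals of length $\lesssim\mu^{1/3}$ straddling $\mu$ (where $g\asymp\mu^{\pm p/12}$ is essentially constant). The exponent $\pm p/4$ lies in the admissible window $(-1,p-1)$ precisely when $-1+\tfrac{p}{8}<\alpha<\tfrac{7}{8}p-1$ after accounting for the Jacobian exponent shift from the substitution, which is how the stated range on $\alpha$ arises; I would verify this bookkeeping carefully since it is where the $\frac18$ and $\frac78$ coefficients come from.

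\emph{Combining and the size of the constant.} Finally I would assemble the global bound. On subintervals $K$ far from the transition zone, $g$ is essentially constant and cancels between the two averages, so $\ent{\omega_{\alpha,\pm}}_{A^p}$ is controlled by the power-weight characteristic of $x^\alpha$ alone, giving a constant $\lesssim 1$ depending only on $\alpha$. The only mechanism that can inflate the characteristic is an interval $K$ straddling $x_0$ whose length is comparable to the transition scale, where the truncated power weight contributes its maximal oscillation; a direct computation there yields a factor $\mu^{3/4}$ in general, which collapses to $O(1)$ exactly when the exponent $\pm p/4$ is harmless, i.e. when $\tfrac43<p<4$. The main obstacle I anticipate is precisely this last, quantitative step: extracting the sharp $\mu^{3/4}$ growth (and its disappearance for $\tfrac43<p<4$) requires carefully tracking the worst-case straddling interval and the interplay between the exponent $\pm p/4$, the truncation scale $\mu^{1/3}$, and the Jacobian from $u=c\sqrt{x}$, rather than any conceptual difficulty. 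I would therefore organize the estimate so that the supremum over $K$ is reduced to a finite list of interval types, each evaluated by an explicit power-law integral, and then read off the two cases of the stated bound.
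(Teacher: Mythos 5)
Your three-region decomposition and the explicit computation on intervals straddling the transition point are in the same spirit as the paper's argument, but the central reduction of your proposal --- the change of variables $u=c\sqrt{x}$ --- is a genuine error, not bookkeeping. The $A^p$ condition is invariant under \emph{affine} maps only (this is exactly what the paper uses: $\ent{\lambda\omega(\mu\,\cdot)}_{A^p}=\ent{\omega}_{A^p}$, applied with the dilation $x\mapsto c^2\mu^{-2}x$); it is not preserved by $u=c\sqrt{x}$. Under this substitution an interval $K$ in $x$ does map to an interval $J$ in $u$, but the normalization $1/|K|$ and the Jacobian $\mathrm{d}x=2u\,\mathrm{d}u/c^2$ do not cancel: what one actually obtains is that $\frac{1}{|K|}\int_K\omega\,\mathrm{d}x$ is the average of $\omega(u^2/c^2)$ over $J$ \emph{with respect to the measure} $u\,\mathrm{d}u$, so the condition to verify is an $A^p$ condition relative to $u\,\mathrm{d}u$, not the standard $A^p(\mathrm{d}u)$ condition for a ``truncated power weight''. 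If instead you fold the Jacobian into the weight, as your phrase ``Jacobian exponent shift'' suggests, then $x^\alpha\,\mathrm{d}x$ becomes $\propto u^{2\alpha+1}\,\mathrm{d}u$, and the classical power-weight window forces $-1<2\alpha+1<p-1$, i.e. $\alpha<\frac{p}{2}-1$, near the origin, and $\alpha<\frac{3p}{8}-1$ at infinity: both strictly smaller than the claimed upper bound $\alpha<\frac{7}{8}p-1$. (That this recipe is not an equivalence is already shown by $\omega\equiv 1$, which is $A^p$ for every $p$ but transforms into $2u/c^2$, an $A^p(\mathrm{d}u)$ weight only for $p>2$.) So, as described, your route can neither recover the stated range of $\alpha$ nor, in particular, cover the weight $\omega_3$ of Proposition \ref{prop:projbessel}, whose exponent $\alpha=\frac{3p}{8}-\frac{1}{2}$ exceeds $\frac{3p}{8}-1$ for every $p$.

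The coefficients $\frac{1}{8}$ and $\frac{7}{8}$ have nothing to do with a Jacobian: they come from the \emph{pointwise} behavior of the second factor at infinity, $\big(|c\sqrt{x}-\mu|+\mu^{1/3}\big)^{\pm p/4}\simeq c^{\pm p/4}x^{\pm p/8}$ for $x\gg\mu^2/c^2$, so that the full weight is pointwise comparable to $x^{\alpha\pm p/8}$ there, whence the requirement $-1<\alpha\pm p/8<p-1$ for both signs. This observation is also the fix for your argument, and it is how the paper proceeds: first rescale affinely, reducing at no cost in the characteristic to $\tilde\omega_{\alpha,\pm}(x)=x^{\alpha}\big(|\sqrt{x}-1|+\mu^{-2/3}\big)^{\pm p/4}$; then use pointwise equivalences of weights (which do preserve $A^p$ characteristics up to constants): $\tilde\omega_{\alpha,\pm}\simeq x^{\alpha}$ on $[0,1/2]$, $\tilde\omega_{\alpha,\pm}\simeq x^{\alpha\pm p/8}$ on $[3/2,\infty)$, and $\tilde\omega_{\alpha,\pm}\simeq\big(|x-1|+\mu^{-2/3}\big)^{\pm p/4}$ on $[1/2,3/2]$; finally compute the characteristic of this last translated truncated power weight explicitly over intervals $[0,a]$, which gives a supremum of an explicit function over $t\in[0,\mu^{2/3}/2]$, bounded for $\frac{4}{3}<p<4$, logarithmic at the endpoints, and growing like a power of $\mu$ otherwise. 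With the nonlinear substitution replaced by these pointwise comparisons, your middle-region analysis would go through essentially as you describe.
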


\begin{proof}[Proof of Lemma \ref{lem:ap}] The first part is well known and left to the reader.
Recall that if $\omega$ is an $A^p$ weight, then so is $\lambda \omega(\mu x)$ with
$\ent{\lambda\omega_j(\mu x)}_{A^p}=\ent{\omega}_{A^p}$.

Next, we have
$$
\omega_{\alpha,+}(x)=(c^2\mu^{-2}x)^{\alpha} c^{-2\alpha} \mu^{2\alpha-p/4}
\big(|\sqrt{c^2\mu^{-2}x}-1|+\mu^{-\frac{2}{3}}\big)^{\frac{p}{4}}=\lambda_3\tilde \omega_{\alpha,+}(c^2\mu^{-2}x)
$$
with $\lambda_3=c^{-2\alpha} \mu^{2\alpha-p/4}$ and
$$
\tilde\omega_{\alpha,+}(x)=x^{\alpha}\big(|\sqrt{x}-1|+\mu^{-\frac{2}{3}}\big)^{\frac{p}{4}}.
$$
So it is enough to estimate $\ent{\tilde\omega_{\alpha,+}}_{A^p}$. Similarly, we may replace
$\omega_{\alpha,-}$ by
$$
\tilde\omega_{\alpha,-}(x)=x^{\alpha}\big(|\sqrt{x}-1|+\mu^{-\frac{2}{3}}\big)^{-\frac{p}{4}}.
$$

Next, on $[0,1/2]$, $\tilde\omega_{\alpha,\pm}(x)\simeq x^{\alpha}\in A^p$ since $-1<\alpha<p-1$. 
Note that constants here are independent on $\mu\geq 1$. On the other hand, on
$[3/2,+\infty]$, $\tilde\omega_{\alpha,\pm}(x)\simeq x^{\alpha\pm p/8}\in A^p$ since $-1<\alpha\pm p/8<p-1$. 
Again, constants here are independent on $\mu\geq 1$.

Finally, on $[1/2,3/2]$,
$$
\tilde\omega_{\alpha,\pm}(x)\simeq \big(|\sqrt{x}-1|+\mu^{-\frac{2}{3}}\big)^{\pm\frac{p}{4}}\simeq
\omega_{\pm}(x):=\big(|x-1|+\mu^{-\frac{2}{3}}\big)^{\pm\frac{p}{4}}.
$$

We thus want to estimate
$$
\ent{\omega_{\pm}}_{A^p}=
\sup_I \left(\frac{1}{|I|}\int_I \big(|x-1|+\mu^{-\frac{2}{3}}\big)^{\pm\frac{p}{4}}\d x\right)
\left(\frac{1}{|I|}\int_I \big(|x-1|+\mu^{-\frac{2}{3}}\big)^{\mp\frac{p'}{4}}\d x\right)^{p/p'}
$$
where the sup runs over intervals $I\subset[1/2,3/2]$.
Equivalently, we want to estimate
$$
\ent{\omega_{\pm}}_{A^p}\simeq
\sup_I \left(\frac{1}{|I|}\int_I \big(|x|+\mu^{-\frac{2}{3}}\big)^{\pm\frac{p}{4}}\d x\right)
\left(\frac{1}{|I|}\int_I \big(|x|+\mu^{-\frac{2}{3}}\big)^{\mp\frac{p'}{4}}\d x\right)^{p/p'}
$$
where the sup runs over intervals $I\subset[-1/2,1/2]$. It is enough to consider $I=[0,a]$ then, when $p\not=4/3,4$, we are looking at
\begin{multline*}
\ent{\omega_{\pm,p}}_{A^p}\simeq
\sup_{a\in[0,1/2]}\left(\frac{\big(a+\mu^{-\frac{2}{3}}\big)^{1\pm\frac{p}{4}}-\big(\mu^{-\frac{2}{3}}\big)^{1\pm\frac{p}{4}}}{a}\right)
\left(\frac{\big(a+\mu^{-\frac{2}{3}}\big)^{1\mp\frac{p'}{4}}-\big(\mu^{-\frac{2}{3}}\big)^{1\mp\frac{p'}{4}}}{a}\right)^{p/p'}
\\
=\sup_{a\in[0,1/2]}\left(\frac{\big(1+a\mu^{\frac{2}{3}}\big)^{1\pm\frac{p}{4}}-1}{a\mu^{\frac{2}{3}}}\right)
\left(\frac{\big(1+a\mu^{\frac{2}{3}}\big)^{1\mp\frac{p'}{4}}-1}{a\mu^{\frac{2}{3}}}\right)^{p/p'}\\
=\sup_{t\in[0,\mu^{\frac{2}{3}}/2]}\left(\frac{\big(1+t\big)^{1\pm\frac{p}{4}}-1}{t}\right)
\left(\frac{\big(1+t\big)^{1\mp\frac{p'}{4}}-1}{t}\right)^{p/p'}
:=\sup_{t\in[0,\mu^{\frac{2}{3}}/2]} \ffi_\pm(t).
\end{multline*}

Note that $\ffi_\pm$ extends continuously at $0$ and that, when $t\to+\infty$. Moreover,

---  $\ffi_\pm(t)=O(1)$ for $p,q<4$ that is $4/3<p<4$

--- for $p>4$, $\ffi_-=\dst O\big(t^{\frac{p}{4}-1}\big)$, $\ffi_+=O(1)$,

--- for $p<4$, $\ffi_+=\dst O\big(t^{\frac{p'}{4}-1}\big)$, $\ffi_-=O(1)$.

The computation has to be slightly modified for $p=4/3$ to obtain $\ffi_+=O(\log t)$ and $\ffi_-=O(1)$ while
for $p=4$ one gets $\ffi_-=O(\log t)$, $\ffi_+=O(1)$. The result follows.
\end{proof}

\section{Application to weighted prolates}

\subsection{Weighted prolates}

In this section, we will fix real numbers $c>0$ and $ \alpha> 0$. We denote by $I=[-1,1]$
that will be endowed with the measure $\omega_\alpha(x)\,\mathrm{d}x$ with $\omega_{\alpha}(x)=(1-x^2)^{\alpha}$.
We will simply write $\omega_\alpha$ for the measure $\omega_\alpha(x)\,\mathrm{d}x$.
The aim of this section is to consider the set of Weighted Prolate Spheroidal Wave Functions (WPSWFs)
introduced in \cite{Karoui-Souabni1,Karoui-Souabni2, Wang2} and to study the $L^p(I,\omega_\alpha)$ convergence of the associated series.

More precisely, the WPSWFs are the eigenfunctions of the weighted finite Fourier transform operator
$\mathcal F_c^{(\alpha)}$ defined  by
\begin{equation}\label{Eq1.1}
\mathcal F_c^{(\alpha)} f(x)=\int_{-1}^1 e^{icxy}  f(y)\,\omega_{\alpha}(y)\,\mathrm{d}y.
\end{equation}
It is well known, see \cite{Karoui-Souabni1, Wang2} that the operator
$$
\mathcal Q_c^{(\alpha)}=\frac{c}{2\pi}
\mathcal F_c^{({\alpha})^*} \circ \mathcal F_c^{(\alpha)}
$$
is  defined on $L^2(I,\omega_{\alpha})$ by
\begin{equation}\label{EEq0}
\mathcal Q_c^{(\alpha)} g (x) = \int_{-1}^1 \frac{c}{2 \pi}\mathcal K_{\alpha}(c(x-y)) g(y) \omega_{\alpha}(y) \d y
\end{equation}
with
$$
\mathcal K_{\alpha}(x)=\sqrt{\pi} 2^{\alpha+1/2}\Gamma(\alpha+1) \frac{J_{\alpha+1/2}(x)}{x^{\alpha+1/2}}
$$
and $J_{\alpha}(\cdot)$ is the Bessel function of the first kind and order $ \alpha$.

It has been shown in \cite{Karoui-Souabni1, Wang2} that the last two integral operators commute with the following
Sturm-Liouville operator $\mathcal L_c^{(\alpha)}$ defined by
\begin{equation}\label{diff_oper1}
\mathcal L_c^{(\alpha)} (f)(x)= - \frac{d}{dx}\left[ \omega_{\alpha}(x) (1-x^2) f'(x)\right] +c^2 x^2 \omega_{\alpha}(x) f(x).
\end{equation}
Also, note that  the $(n+1)-$th eigenvalue $\chi_n(c)$ of $\mathcal L_c^{(\alpha)}$ satisfies the following classical inequalities,
\begin{equation}
\label{boundschi}
n (n+2\alpha+1) \leq \chi_n(c) \leq n (n+2\alpha+1) +c^2,\quad \forall n\geq 0.
\end{equation}
We will denote by $(\Psi_{n,c}^{(\alpha)})_{n\geq 0}$ the set of common eigenfunctions of $\mathcal F_c^{(\alpha)}, \mathcal Q_c^{(\alpha)}$ and $\mathcal L_c^{(\alpha)}$ and call them {\em Weighted Prolate Spheroidal Wave Functions (WPSWFs)}.
Then $\{ \ps , n\geq 0 \} $ is an orthogonal basis of $ L^2(I,\omega_{\alpha})$.

Our aim will be to apply Theorem \ref{th:main} with the following setting: $\Omega=I$, $\mu=\omega_\alpha$,
$\psi_n=\Psi_{n,c}^{(\alpha)}$. The first task will be to define the basis $\ffi_n$ and then to show that it satisfies
each of the desired properties.

\subsection{Some facts about Jacobi polynomials}

\subsubsection{Jacobi polynomials}

In this section, we gather results on Jacobi polynomials\footnote{We only use a particular subfamily of Jacobi polynomials and may as well call them ultra-spherical or Gegenbauer polynomials.} that will be used later.
The Jacobi polynomials are defined as being the orthonormal family of polynomials with respect to the scalar product
associated to $\norm{\cdot}_{L^2(I,\omega_\alpha)}$ with leading coefficient being non-negative.

Alternatively, we define the (non-normalized) Jacobi polynomials $P_k^{(\alpha)}$ through the induction
formula (see for example \cite{An})
\begin{equation}
\label{eq:recGeg}
P_{k+1}^{(\alpha)}(x)= A_k x P_{k}^{(\alpha)}(x) -C_k P_{k-1}^{(\alpha)}(x),\quad x\in [-1,1],
\end{equation}
where  $P_0^{(\alpha)}(x)=1,\quad P_1^{(\alpha)}(x)=(\alpha+1)x +\alpha$ and
$$
A_k =\frac{(2k+2\alpha+1)(k+\alpha+1)}{(k+1)(k+2\alpha+1)}=2-\frac{1}{k}+O(k^{-2})\ ,\
C_k=\frac{(k+\alpha)(k+\alpha+1)}{(k+1)(k+2\alpha+1)}=1-\frac{1}{k}+O(k^{-2}).
$$
We consider the normalized Jacobi polynomials $\wJ_k=\norm{P_k^{(\alpha)}}_{L^2(I,\omega_\alpha)}^{-1}P_k^{(\alpha)}$
which form an orthonormal basis of $L^2(I,\omega_{\alpha})$. A cumbersome computation shows that
\begin{equation*}
\wJ_{k}(x)= \frac{1}{\sqrt{h_k^{(\alpha)}}}\J_k(x),\quad h_k^{(\alpha)}=\frac{2^{2\alpha+1}\Gamma(k+\alpha+1)^2}{k!(2k+2\alpha+1)\Gamma(k+2\alpha+1)}.
\end{equation*}
The normalized Jacobi polynomials satisfy the recursion formula
\begin{equation}\label{recursion}
\wJ_{k+1}(x)= \tilde A_k x \wJ_{k}(x) -\tilde C_k \wJ_{k-1}(x),
\end{equation}
where
\begin{equation}\label{coefficients}
\tilde A_k=\sqrt{\frac{h_{k}^{(\alpha)}}{h_{k+1}^{(\alpha)}}} A_k=2+O(k^{-2})
\quad,\quad
\tilde C_k=\sqrt{\frac{h_{k-1}^{(\alpha)}}{h_{k+1}^{(\alpha)}}} C_k=1-\frac{1}{2k}+O(k^{-2})
\end{equation}
since
$$
\frac{h_{k}^{(\alpha)}}{h_{k+1}^{(\alpha)}}=\frac{(k+1)(2k+2\alpha+3)(k+2\alpha+1)}{(2k+2\alpha+1)(k+\alpha+1)^2}=1+\frac{1}{k}+O(k^{-2}).
$$

Further, it has been shown that
\begin{equation}
\label{eq:Jacobibound}
|\wJ_{n}(x)|\lesssim w_{n,\alpha}(x):=(\sqrt{1-x}+n^{-1})^{-\alpha-1/2}(\sqrt{1+x}+n^{-1})^{-\alpha-1/2}
\end{equation}
uniformly over $(-1,1)$ where the constant involved is independent of $n$ ({\it see e.g.} \cite[Chapter 4]{Szego}).

Moreover, let $p_0=2-\frac{1}{\alpha+3/2}$ so that $p_0^{\prime}=\dst 2+\frac{1}{\alpha+1/2}$ then,
for $1<p<\infty$ , the $L^p$-norm of Jacobi polynomials is given by Aptekarev, Buyarov and Degeza \cite{ABD} ({\it see also} \cite{ADMF}):
\begin{equation}
\label{normelp}
\|{\wJ_n}\|_{L^p(I,\omega_{\alpha})}= \begin{cases}
C(\alpha, p) + \circ(1) &\mbox{if }1<p<p_0^{\prime}\\
C(\alpha, p) \log(n)(1+\circ(1))&\mbox{when }p=p_0^{\prime}\\
n^{(\alpha+1/2)(p-p_0^{\prime})}&\mbox{when }p>p_0^{\prime}
\end{cases}
\end{equation}
with $C(\alpha,p)$ is a generic constant depending only on $\alpha$ and $p$.
Note that
\begin{equation}
\label{normelplq}
L_n(\alpha) := \|\wJ_n\|_{L^p(I,\omega_{\alpha})}\|{\wJ_n}\|_{L^{p'}(I,\omega_{\alpha})} \approx
\begin{cases}
n^{(\alpha+1/2)(p^{\prime}-p_0^{\prime})}&\mbox{when }1<p<p_0\\
\log n&\mbox{when }p=p_0\mbox{ or }p=p_0^{\prime}\\
1&\mbox{when }p_0<p<p_0^{\prime}\\
n^{(\alpha+1/2)(p-p_0^{\prime})}&\mbox{when }p>p_0^{\prime}.
\end{cases}
\end{equation}
In particular, $L_n(\alpha)= O(n^{\alpha_p})$ with $\alpha_p=0$ if $p\in(p_0,p_0^{\prime})$ and $\alpha_p<1$
when $p\in(p_1,p_1^{\prime})$ with $p_1^{\prime}=\dst p_0^{\prime}+\frac{1}{\alpha+1/2}
=2+\frac{2}{\alpha+1/2}$.
It follows that Condition $(L)$ of Theorem \ref{th:main} is satisfied.

Further, the Jacobi polynomials are eigenfunctions of the differential operator
$$
Lf:=(1-x^{2})f''-(2\alpha+1)xf'
$$
with eigenvalue $\lambda_n=-n(n+2\alpha+1)$. It follows from Lemma \ref{lem:condD} that Condition $(D)$ of Theorem \ref{th:main}
is also satisfied.

\subsubsection{The Projection on the span of Jacobi polynomials}
Let us now introduce
$$
C^{(\alpha)}_N(x,y)= \sum_{k=0}^N \wJ_k(x) \wJ_k(y)
$$
and, according to the Christofel Darboux Formula,
$$
C^{(\alpha)}_N(x,y)= \frac{\beta_N}{\beta_{N+1}}\frac{\wJ_{N+1}(x)\wJ_N(y)-\wJ_{N+1}(y)\wJ_N(x)}{x-y}.
$$
Pollard \cite{Pollard2} proved that $C^{(\alpha)}_N$ defines a bounded operator $C^{(\alpha)}_N\,:L^p(I,\omega_\alpha)
\to L^p(I,\omega_\alpha)$ and that the
operators $C^{(\alpha)}_N$ are uniformly bounded in the range $p_0<p<p_0^\prime$.
Further, he proved that the series
$C^{(\alpha)}_Nf$ may diverge if $p\notin [p_0,p_0^\prime]$ but did not provide a bound for $C^{(\alpha)}_N$. The divergence at the end points was proved later by Newman and Rudin \cite{Newman}.
The key point in Pollard's proof is the following identity
\begin{eqnarray*}
C^{(\alpha)}_Nf(x)&=&
U_n\widetilde{P}_{n+1}^{(\alpha)}(x)\int_{-1}^1\frac{\widetilde{Q}_{n}^{(\alpha)}(y)f(y)\omega_\alpha(y)}{x-y}\,\mbox{d}y\\
&&+V_n\widetilde{Q}_{n}^{(\alpha)}(x)\int_{-1}^1\frac{\widetilde{P}_{n+1}^{(\alpha)}(y)f(y)\omega_\alpha(y)}{x-y}\,\mbox{d}y\\
&&+W_n\scal{f,\widetilde{P}_{n+1}^{(\alpha)}}_{L^2(I,\omega_\alpha)}\widetilde{P}_{n+1}^{(\alpha)}(x)\\
&=&C^{(\alpha,1)}_Nf(x)+C^{(\alpha,2)}_Nf(x)+C^{(\alpha,3)}_Nf(x)
\end{eqnarray*}
where $U_n,V_n,W_n\to\frac{1}{2}$ and $\widetilde{Q}_{n}^{(\alpha)}$ is an other family of orthogonal polynomials.

H\"older's inequality and Lemma \ref{lem:triv} show that
$\norm{C^{(\alpha,3)}_N}_{L^p(I,\omega_\alpha)\to L^p(I,\omega_\alpha)}\lesssim N^{\alpha_p}$ while
Pollard showed that $\norm{C^{(\alpha,j)}_Nf}_{L^p(I,\omega_\alpha)\to L^p(I,\omega_\alpha)}\lesssim 1$ for $j=1,2$.

Let us summarize the results from this section

\begin{lemma}
\label{lem:projjacobi}
Let $1<p<\infty$ and $\alpha>-1/2$, $\eps>0$. Let $I=(-1,1)$, $\omega_\alpha(x)=(1-x^2)^\alpha$ and
$\tilde P_n^{(\alpha)}$ be the Jacobi polynomials , {\it i.e.} the orthonormal family of polynomials
in $L^2(I,\omega_\alpha)$ defined above. Let $C^{(\alpha)}_N$ be the orthogonal projection on the span
of $\tilde P_0^{(\alpha)},\ldots,\tilde P_N^{(\alpha)}$.

Let $p_0=2-\frac{1}{\alpha+3/2}$ so that $p_0^{\prime}=\dst 2+\frac{1}{\alpha+1/2}$. Define
$$
\alpha_p=\begin{cases}
(\alpha+1/2)(p^{\prime}-p_0^{\prime})&\mbox{ when }1<p<p_0\\
\eps&\mbox{ when }p=p_0\mbox{ or }p_0^{\prime}\\
0&\mbox{ when } p\in(p_0,p_0^{\prime})\\
(\alpha+1/2)(p-p_0^{\prime})&\mbox{ when }p>p_0^{\prime}
\end{cases}
$$
so that $\alpha_p<1$  when $p\in(p_1,p_1^{\prime})$ with $p_1^{\prime}=\dst p_0^{\prime}+\frac{1}{\alpha+1/2}
=2+\frac{2}{\alpha+1/2}$. Then

--- {\rm Aptekarev, Buyarov and Degeza \cite{ABD}} we have
\begin{equation}
\label{normlplq}
\|\wJ_n\|_{L^p(I,\omega_{\alpha})}\|{\wJ_n}\|_{L^{p'}(I,\omega_{\alpha})}\lesssim n^{\alpha_p};
\end{equation}
--- {\rm Pollard \cite{Pollard2}} the operators $C^{(\alpha)}_N$ extend to bounded operators
$L^p(I,\omega_\alpha)\to L^p(I,\omega_\alpha)$ with
$$
\norm{C^{(\alpha)}_N}_{L^p(I,\omega_\alpha)\to L^p(I,\omega_\alpha)}\lesssim N^{\alpha_p}.
$$
\end{lemma}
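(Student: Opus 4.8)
The plan is to assemble the two bounds from the ingredients already prepared above, since each bullet is essentially a restatement of a known result. The product estimate \eqref{normlplq} requires no new work: it is precisely the asymptotic \eqref{normelplq} of Aptekarev, Buyarov and Degeza, read off regime by regime. For $p_0<p<p_0^\prime$ the product is $O(1)=n^0$; at the two critical exponents it is $O(\log n)$, absorbed by $n^\eps$; and outside $[p_0,p_0^\prime]$ it grows like the stated power of $n$. In every case the exponent equals $\alpha_p$, which is $<1$ on $(p_1,p_1^\prime)$, so the first assertion follows directly from \eqref{normelp}.

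The substance is the operator bound, for which I would use Pollard's Christoffel--Darboux splitting $C^{(\alpha)}_N=C^{(\alpha,1)}_N+C^{(\alpha,2)}_N+C^{(\alpha,3)}_N$ recalled just above and estimate the three pieces separately. The diagonal piece $C^{(\alpha,3)}_Nf=W_N\scal{f,\wJ_{N+1}}_{L^2(I,\omega_\alpha)}\wJ_{N+1}$ has kernel $W_N\wJ_{N+1}(x)\overline{\wJ_{N+1}(y)}$ with $W_N\to\frac12$ bounded, so Lemma \ref{lem:triv} combined with H\"older's inequality gives
$$
\norm{C^{(\alpha,3)}_N}_{L^p(I,\omega_\alpha)\to L^p(I,\omega_\alpha)}\lesssim\norm{\wJ_{N+1}}_{L^p(I,\omega_\alpha)}\norm{\wJ_{N+1}}_{L^{p'}(I,\omega_\alpha)}\lesssim N^{\alpha_p},
$$
which is already the target rate. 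Since $\alpha_p\geq0$, it then suffices to show $\norm{C^{(\alpha,j)}_N}\lesssim 1$ for $j=1,2$ so that the whole sum stays $\lesssim N^{\alpha_p}$.

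For the off-diagonal pieces I would rewrite, say, $C^{(\alpha,1)}_Nf=\pi U_N\,\wJ_{N+1}\cdot\hh\bigl(\widetilde Q_{N}^{(\alpha)}f\,\omega_\alpha\bigr)$, reducing its boundedness to a weighted Hilbert transform estimate: one seeks a weight $W$ on $(-1,1)$ with $\int_{-1}^1|\hh g|^pW\lesssim\int_{-1}^1|g|^pW$ and with $|\wJ_{N+1}|^p\omega_\alpha\lesssim W$ while $\int_{-1}^1|\widetilde Q_{N}^{(\alpha)}f\,\omega_\alpha|^pW\lesssim\int_{-1}^1|f|^p\omega_\alpha$. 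Using the envelope bound \eqref{eq:Jacobibound} for $\wJ_{N+1}$ together with the matching asymptotics of the second-kind functions $\widetilde Q_{N}^{(\alpha)}$, the candidate is a power weight $(1-x)^a(1+x)^a$ on the fixed interval $(-1,1)$, whose $A^p$-character one checks as in Lemma \ref{lem:ap}; the Hunt--Muckenhoupt--Wheeden/Petermichl bound \eqref{eq:hilweight} then yields boundedness, and the $N$-independence of the $A^p$ constant gives the uniform bound $\lesssim 1$, with $C^{(\alpha,2)}_N$ handled symmetrically. I expect this to be the only real obstacle: one must track the oscillatory Darboux asymptotics of both $\wJ_{N+1}$ and $\widetilde Q_N^{(\alpha)}$ near the endpoints $x=\pm1$, where $\omega_\alpha$ degenerates, and verify that the effective weight is $A^p$ with a constant independent of $N$ --- this is exactly Pollard's computation in \cite{Pollard2}, and it is where the range $p_0<p<p_0^\prime$ makes its appearance. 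Combining the three estimates gives $\norm{C^{(\alpha)}_N}\lesssim N^{\alpha_p}$ and finishes the lemma.
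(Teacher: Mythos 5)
Your proposal follows the paper's own route essentially verbatim: the first bullet is read off from \eqref{normelp}--\eqref{normelplq} exactly as in the text, and for the second bullet the paper likewise takes Pollard's splitting $C^{(\alpha)}_N=C^{(\alpha,1)}_N+C^{(\alpha,2)}_N+C^{(\alpha,3)}_N$, bounds the rank-one term by
$\norm{\wJ_{N+1}}_{L^p(I,\omega_\alpha)}\norm{\wJ_{N+1}}_{L^{p'}(I,\omega_\alpha)}\lesssim N^{\alpha_p}$
via H\"older and Lemma \ref{lem:triv}, and invokes Pollard for $\norm{C^{(\alpha,j)}_N}_{L^p(I,\omega_\alpha)\to L^p(I,\omega_\alpha)}\lesssim 1$, $j=1,2$; your $A^p$/Petermichl sketch of that last input is the same mechanism the paper itself deploys in the Bessel setting (Proposition \ref{prop:projbessel}).

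One caution about your closing remark. The entire content of the lemma is the bound for $p\notin(p_0,p_0^\prime)$, since inside that interval $\alpha_p=0$ and the statement is just Pollard's uniform boundedness theorem. For the argument to deliver this, the uniform bounds on $C^{(\alpha,1)}_N$ and $C^{(\alpha,2)}_N$ must hold for \emph{every} $p\in(1,\infty)$ (or at least on $(p_1,p_1^\prime)$, cf.\ Remark \ref{re:thmain}), which is exactly what the paper asserts from Pollard: in the correct bookkeeping the critical range enters only through the rank-one term, i.e.\ through the growth of $\norm{\wJ_{N+1}}_{L^p}\norm{\wJ_{N+1}}_{L^{p'}}$, and not through the two Hilbert-transform pieces. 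If, as your last sentence suggests, the weighted Hilbert transform estimate were ``where the range $p_0<p<p_0^\prime$ makes its appearance,'' then your proof would establish nothing outside $(p_0,p_0^\prime)$ and the lemma would reduce to a triviality; so either drop that aside or, better, verify from Pollard's estimates (with the envelope of $\widetilde{Q}_N^{(\alpha)}$, whose endpoint behavior differs from that of $\wJ_{N+1}$) that the $j=1,2$ bounds are indeed $p_0$-independent.
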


\subsection{Condition $(R)$}
The aim of this section is to establish condition $(R)$ of Theorem \ref{th:main}.

The series expansion of the
WPSWFs in the basis of Jacobi polynomials $(\widetilde P_n^{(\alpha)})$ which can be written in the form
\begin{equation}\label{expansion1}
\Psi^{(\alpha)}_{n,c}=\sum_{k\geq 0} \beta_k^n \widetilde P_k^{(\alpha)}
\end{equation}
where $\beta_k^n=\scal{\Psi^{(\alpha)}_{n,c},\widetilde P_k^{(\alpha)}}_{L^2(I, \omega_{\alpha})}$.
By replacing the expression (\ref{expansion1}) in the differential equation \eqref{diff_oper1}, one gets the following recursion formula satisfied by the $\beta_k^n$ for $k\geq 2$
\begin{equation}
\label{rec:WPSFs}
f(k,n) \beta_k^{(n)} = a_k^{(\alpha)}\beta_{k-2}^{(n)} + a_{k+2}^{(\alpha)}\beta^{(n)}_{k+2},
\end{equation}
where
\begin{eqnarray}\label{coeff_GPSWFs}
 f(k,n)&=&\frac{ \chi_n(c) - \Bigg( k(k+2\alpha+1)+c^2 b_k^{(\alpha)}   \Bigg)}{c^2}   \\
 a_k^{(\alpha)}&=& \frac{\sqrt{k(k-1)(k+2\alpha)(k+2\alpha-1)}}{(2k+2\alpha-1)\sqrt{(2k+2\alpha+1)(2k+2\alpha-3)}}   \nonumber \\
 b_k^{(\alpha)}&=& \frac{2k(k+2\alpha+1)+2\alpha-1}{(2k+2\alpha+3)(2k+2\alpha-1)} . \nonumber
\end{eqnarray}

This is not exactly of the desired form. To overcome this problem, first note that $\Psi_{n,c}^{(\alpha)}$ and $\widetilde P_{n}^{(\alpha)}$ have same parity as $n$, so that $\beta_k^{(n)}=0$ if $k$ and $n$ have opposite parity.
Next, we decompose
$$
L^p(I, \omega_{\alpha})=L^p_e(I, \omega_{\alpha})\oplus L^p_o(I, \omega_{\alpha})
$$
where $L^p_e(I, \omega_{\alpha})$, resp. $L^p_o(I, \omega_{\alpha})$, is the set of even, resp. odd, functions
in $L^p(I, \omega_{\alpha})$.

Our aim is then to characterize for which $p$, for every $f\in L^p_e(I, \omega_{\alpha})$ --- resp. $f\in L^p_o(I, \omega_{\alpha})$ ---
$\sum_{n\geq 0} \scal{f,\Psi_{2n,c}^{(\alpha)}}\Psi_{n,c}^{(\alpha)}$
--- resp. $\sum_{n\geq 0} \scal{f,\Psi_{2n+1,c}^{(\alpha)}}\Psi_{n,c}^{(\alpha)}$ ---
converges to $f$ in $L^p(I, \omega_{\alpha})$.
This can be done by applying Theorem \ref{th:main}.
To do so, we will now establish condition $(R)$.

First note that $a_k^{(\alpha)}\to 1/4$ and that we may write
$$
a_k^{(\alpha)}=\frac{\sqrt{(1-k^{-1})(1+2\alpha k^{-1})\bigl(1+(2\alpha-1)k^{-1}\bigr)}}{\bigl(2+(2\alpha-1)k^{-1}\bigr)\sqrt{\bigl(2+(2\alpha+1)k^{-1}\bigr)\bigl(2+(2\alpha-3)k^{-1}\bigr)}}
$$
from which it is obvious that $a_k^{(\alpha)}=1/4+O(k^{-1})$. As $b_k^{(\alpha)}$ is clearly bounded,
all conditions of Lemma \ref{lem:fkn} are satisfied and $f(k,n)$ satisfies all requirements of condition $(R)$.

It remains to establish the following:

\begin{lemma}
For every $\alpha>-1/2$ and every $k\geq 2$, $|a_k^{(\alpha)}|\leq 1/2$.
\end{lemma}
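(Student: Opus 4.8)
The plan is to reduce the claim to an elementary single-variable polynomial inequality. Since $k\ge 2$ and $\alpha>-1/2$, every factor under the square roots as well as $2k+2\alpha-1$ is strictly positive, so $a_k^{(\alpha)}\ge 0$ and it suffices to prove $(a_k^{(\alpha)})^2\le 1/4$, that is
$$
4k(k-1)(k+2\alpha)(k+2\alpha-1)\le (2k+2\alpha-1)^2(2k+2\alpha+1)(2k+2\alpha-3).
$$
The first step is to introduce the shift $m=2k+2\alpha-1$, which satisfies $m\ge 3+2\alpha>2$. Rewriting the four linear factors on the left in terms of $m$ pairs them into two differences of squares, namely $k(k+2\alpha-1)=\frac14\bigl(m^2-(2\alpha-1)^2\bigr)$ and $(k-1)(k+2\alpha)=\frac14\bigl(m^2-(2\alpha+1)^2\bigr)$, while the right-hand side becomes $m^2(m^2-4)$. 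Expanding, the desired inequality is equivalent to $\phi(m^2)\ge 0$, where $\phi(Y)=3Y^2+(8\alpha^2-14)Y-(4\alpha^2-1)^2$.

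The second step is to localize the smallest relevant value of the argument. As $k$ ranges over $k\ge 2$, the quantity $m^2$ is minimal at $k=2$, where $m^2=(3+2\alpha)^2=:Y_0>4$. Since $\phi'(Y)=6Y+8\alpha^2-14$ is positive as soon as $Y\ge (7-4\alpha^2)/3$, and $Y_0>4>7/3\ge (7-4\alpha^2)/3$, the function $\phi$ is increasing on $[Y_0,\infty)$; hence it is enough to check $\phi(Y_0)\ge 0$.

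The final step is the evaluation of $\phi(Y_0)$. Setting $\beta=2\alpha$ and using $Y_0=(\beta+3)^2$, a direct expansion gives $\phi(Y_0)=4\bigl(\beta^4+12\beta^3+42\beta^2+60\beta+29\bigr)$, and since $\beta=-1$ is a root one factors this as $4(\beta+1)(\beta^3+11\beta^2+31\beta+29)$. For $\alpha>-1/2$ we have $\beta>-1$, so $\beta+1>0$; moreover the cubic has derivative $3\beta^2+22\beta+31$, whose two roots both lie below $-1$, so the cubic is increasing on $(-1,\infty)$ and equals $8$ at $\beta=-1$, whence it stays positive there. Therefore $\phi(Y_0)>0$, which closes the argument. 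I expect no genuine obstacle here: the only substantive idea is the substitution $m=2k+2\alpha-1$, which symmetrizes the four linear factors into two differences of squares and collapses everything to a single upward parabola. The remaining difficulty is purely bookkeeping — tracking the expansions and the factorization of the final quartic — and the generous slack (the limit of $a_k^{(\alpha)}$ is $1/4$, half the target bound) makes the inequality comfortably true.
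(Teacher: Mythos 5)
Your proof is correct, and it follows a genuinely different route from the paper's. The paper argues by cases: $k=2$ is handled by maximizing the explicit expression in $\alpha$; for $k\ge 3$ and $-1/2<\alpha\le 1/2$ each factor is bounded termwise to get $a_k^{(\alpha)}\le\frac{1}{4}\sqrt{\frac{k(k+1)}{(k-1)(k-2)}}$, claimed to be at most $\frac{\sqrt{3}}{4}$; and for $\alpha>0$ the bounds $2k+2\alpha-1\ge 2\sqrt{k(k-1)}$, $2k+2\alpha+1\ge k+2\alpha$, $2k+2\alpha-3\ge k+2\alpha-1$ give the result directly. Your substitution $m=2k+2\alpha-1$ instead symmetrizes the numerator into $\frac{1}{16}\left(m^2-(2\alpha-1)^2\right)\left(m^2-(2\alpha+1)^2\right)$ and the denominator into $m^2(m^2-4)$, reducing everything to the positivity of a single quadratic $\phi(Y)$ on $[Y_0,\infty)$ with $Y_0=(3+2\alpha)^2$; I verified the expansion $\phi(Y_0)=4(\beta+1)(\beta^3+11\beta^2+31\beta+29)$ with $\beta=2\alpha$, the location of the critical points of the cubic (both below $-1$), and the value $8$ at $\beta=-1$, and all are correct. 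Beyond being uniform in $k$ and $\alpha$, your argument buys rigor precisely where the paper's is too lossy: at $k=3$ the paper's intermediate bound equals $\frac{\sqrt{6}}{4}\approx 0.61>\frac{1}{2}$ (and at $k=4$ it still exceeds $\frac{\sqrt{3}}{4}$), so the paper's second case does not actually settle $k=3$ for $-1/2<\alpha\le 0$, a range its third case ($\alpha>0$) does not rescue. Your proof has no such gap and even yields the strict inequality $a_k^{(\alpha)}<\frac{1}{2}$; its only cost is the polynomial bookkeeping, which is routine and which checks out.
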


\begin{proof} First
$$
a_2^{(\alpha)}=\frac{2\sqrt{1+\alpha}}{(3+2\alpha)\sqrt{5+2\alpha}}
$$
which is maximal for $\alpha=\sqrt{2}-2<-1/2$ and the maximal value is $\dst\sqrt{\frac{64\sqrt{2}-52}{343}}\sim 0.335<1/2$.

Next, for $k\geq 3$ and $-1/2<\alpha\leq 1/2$, we bound
$$
a_k^{(\alpha)}\leq \frac{1}{4}\sqrt{\frac{k(k+1)}{(k-1)(k-2)}}\leq\frac{\sqrt{3}}{4}<\frac{1}{2}.
$$
Finally, $2k+2\alpha+1\geq k+2\alpha$, for $\alpha>0$, $(2k+2\alpha-1)\geq 2\sqrt{k(k-1)}$,
and $2k+2\alpha-3\geq k+2\alpha-1$ when $k\geq 2$ thus
$$
a_k^{(\alpha)}= \frac{\sqrt{k(k-1)(k+2\alpha)(k+2\alpha-1)}}{(2k+2\alpha-1)\sqrt{(2k+2\alpha+1)(2k+2\alpha-3)}}
<\frac{1}{2}
$$
as announced.
\end{proof}

\subsection{Condition $(B')$}

We will now establish Condition $(B')$ in Theorem \ref{th:main}. As we consider separately $L^p_e(I, \omega_{\alpha})$
and $L^p_o(I, \omega_{\alpha})$, we actually have to estimate the $L^p(I, \omega_{\alpha})\to L^p(I, \omega_{\alpha})$
norm of the operator with kernel
$$
\Phi_N(x,y)=\sum_{n=0}^N
\bigl(\widetilde{P}_n^{(\alpha)}(x)\widetilde{P}_{n+2}^{(\alpha)}(y)+\widetilde{P}_{n+2}^{(\alpha)}(x)\widetilde{P}_n^{(\alpha)}(y)\bigr)
$$
with $\widetilde{P}_{n+2}^{(\alpha)}$ instead of $\widetilde{P}_{n+1}^{(\alpha)}$.
We will also write $\Phi_N$ for the associated operator on $L^p(I, \omega_{\alpha})$. Note that the bound \eqref{normlplq}
together with Lemma \ref{lem:triv} leads to
$$
\norm{\Phi_N}_{L^p(I, \omega_{\alpha})\to L^p(I, \omega_{\alpha})}\lesssim N^{1+\alpha_p}
$$
which is not good enough for our needs.

Using the recursion formula \eqref{recursion} twice, we get for $n\geq 2$,
\begin{eqnarray*}
\widetilde{P}_n^{(\alpha)}(x)\widetilde{P}_{n+2}^{(\alpha)}(y)&=&
\widetilde{P}_n^{(\alpha)}(x)\bigl(\tilde A_{n+1} y\widetilde{P}_{n+1}^{(\alpha)}(y)
-\tilde C_{n+1}\widetilde{P}_{n}^{(\alpha)}(y)\bigr)\\
&=&y\tilde A_{n+1}\widetilde{P}_n^{(\alpha)}(x)\bigl(y\tilde A_{n}\widetilde{P}_n^{(\alpha)}(y)
-\tilde C_n\widetilde{P}_{n-1}^{(\alpha)}(y)\bigr)
-\tilde C_{n+1}\widetilde{P}_n^{(\alpha)}(x)\widetilde{P}_{n}^{(\alpha)}(y)\\
&=&y^2\tilde A_{n+1}\tilde A_n\widetilde{P}_n^{(\alpha)}(x)\widetilde{P}_n^{(\alpha)}(y)
-y\tilde A_{n+1}\tilde C_n\widetilde{P}_n^{(\alpha)}(x)\widetilde{P}_{n-1}^{(\alpha)}(y)
-\tilde C_{n+1}\widetilde{P}_n^{(\alpha)}(x)\widetilde{P}_{n}^{(\alpha)}(y).
\end{eqnarray*}
Next note that
$$
y\widetilde{P}_{n-1}^{(\alpha)}(y)=\frac{1}{\tilde A_{n-1}}\widetilde{P}_n^{(\alpha)}(y)
+\frac{\tilde C_{n-1}}{\tilde A_{n-1}}\widetilde{P}_{n-2}^{(\alpha)}(y)
$$
so that
\begin{multline*}
\widetilde{P}_n^{(\alpha)}(x)\widetilde{P}_{n+2}^{(\alpha)}(y)
=y^2\tilde A_{n+1}\tilde A_n\widetilde{P}_n^{(\alpha)}(x)\widetilde{P}_n^{(\alpha)}(y)
-\left(\tilde C_{n+1}+\frac{\tilde A_{n+1}\tilde C_n}{\tilde A_{n-1}}\right)\widetilde{P}_n^{(\alpha)}(x)\widetilde{P}_n^{(\alpha)}(y)\\
-\frac{\tilde A_{n+1}\tilde C_n\tilde C_{n-1}}{\tilde A_{n-1}}\widetilde{P}_n^{(\alpha)}(x)\widetilde{P}_{n-2}^{(\alpha)}(y).
\end{multline*}

Let us define 
\begin{eqnarray*}
\kappa_n&=&\dst-\Big(\tilde C_{n+1}+\frac{\tilde A_{n+1}\tilde C_n}{\tilde A_{n-1}}\Big)=-1+\frac{1}{n}+O(n^{-2})\\
\tilde\kappa_n&=&\dst 1-\frac{\tilde A_{n+1}\tilde C_n\tilde C_{n-1}}{\tilde A_{n-1}}=\frac{1}{n}+O(n^{-2}).
\end {eqnarray*}
Then
\begin{eqnarray*}
\widetilde{P}_n^{(\alpha)}(x)\widetilde{P}_{n+2}^{(\alpha)}(y)+\widetilde{P}_n^{(\alpha)}(y)\widetilde{P}_{n+2}^{(\alpha)}(x)
&=&y^2\tilde A_{n+1}\tilde A_n\widetilde{P}_n^{(\alpha)}(x)\widetilde{P}_n^{(\alpha)}(y)
+\kappa_n\widetilde{P}_n^{(\alpha)}(x)\widetilde{P}_n^{(\alpha)}(y)\\
&&+\tilde\kappa_n\widetilde{P}_n^{(\alpha)}(x)\widetilde{P}_{n-2}^{(\alpha)}(y).
\end{eqnarray*}
Summing over $n$, we conclude that
\begin{eqnarray*}
\Phi_N(x,y)&=&\widetilde{P}_0^{(\alpha)}(x)\widetilde{P}_{2}^{(\alpha)}(y)
+\sum_{n=2}^N\bigl(\widetilde{P}_n^{(\alpha)}(x)\widetilde{P}_{n+2}^{(\alpha)}(y)+\widetilde{P}_n^{(\alpha)}(x)\widetilde{P}_{n-2}^{(\alpha)}(y)\bigr)-\widetilde{P}_N^{(\alpha)}(x)\widetilde{P}_{N+2}^{(\alpha)}(y)\\
&=&\widetilde{P}_0^{(\alpha)}(x)\widetilde{P}_{2}^{(\alpha)}(y)-\widetilde{P}_N^{(\alpha)}(x)\widetilde{P}_{N+2}^{(\alpha)}(y)
+y^2\sum_{n=2}^N\tilde A_{n+1}\tilde A_n\widetilde{P}_n^{(\alpha)}(x)\widetilde{P}_n^{(\alpha)}(y)\\
&&+\sum_{n=2}^N\kappa_n\widetilde{P}_n^{(\alpha)}(x)\widetilde{P}_n^{(\alpha)}(y)
+\sum_{n=2}^N\tilde\kappa_n\widetilde{P}_n^{(\alpha)}(x)\widetilde{P}_{n-2}^{(\alpha)}(y).
\end{eqnarray*}
Further, exchanging the roles of $x$ and $y$ and summing, we obtain
$2\Phi_N(x,y)=\Phi_N^1(x,y)+\cdots+\Phi_N^6(x,y)$ where
\begin{eqnarray*}
\Phi_N^1(x,y)&=&\widetilde{P}_0^{(\alpha)}(x)\widetilde{P}_{2}^{(\alpha)}(y)+\widetilde{P}_2^{(\alpha)}(x)\widetilde{P}_0^{(\alpha)}(y)\\
&&-(4x^2+4y^2-2)\bigl(\widetilde{P}_0^{(\alpha)}(x)\widetilde{P}_0^{(\alpha)}(y)+\widetilde{P}_1^{(\alpha)}(x)\widetilde{P}_1^{(\alpha)}(y)\bigr)\\
\Phi_N^2(x,y)&=&-\widetilde{P}_N^{(\alpha)}(x)\widetilde{P}_{N+2}^{(\alpha)}(y)-\widetilde{P}_{N+2}^{(\alpha)}(x)\widetilde{P}_N^{(\alpha)}(y)\\
\Phi_N^3(x,y)&=&(x^2+y^2)\sum_{n=2}^N\tilde A_{n+1}\tilde A_n\widetilde{P}_n^{(\alpha)}(x)\widetilde{P}_n^{(\alpha)}(y)\\
\Phi_N^4(x,y)&=&2\sum_{n=2}^N\kappa_n\widetilde{P}_n^{(\alpha)}(x)\widetilde{P}_n^{(\alpha)}(y)\\
\Phi_N^5(x,y)&=&\sum_{n=2}^N\tilde\kappa_n\bigl(\widetilde{P}_n^{(\alpha)}(x)\widetilde{P}_{n-2}^{(\alpha)}(y)
+\widetilde{P}_{n-2}^{(\alpha)}(x)\widetilde{P}_n^{(\alpha)}(y)\bigr).
\end{eqnarray*}

We also write $\Phi_N^j$ for the corresponding integral operators and will now estimate their norm as operators $L^p(I,\omega_\alpha)
\to L^p(I,\omega_\alpha)$.

Using the bound \eqref{normlplq} together with Lemma \ref{lem:triv} we get
$$
\norm{\Phi_N^1}_{L^p(I, \omega_{\alpha})\to L^p(I, \omega_{\alpha})}\lesssim 1
$$
and
$$
\norm{\Phi_N^2}_{L^p(I, \omega_{\alpha})\to L^p(I, \omega_{\alpha})}\lesssim N^{\alpha_p}.
$$

Using Abel summation, we can write
\begin{eqnarray*}
\Phi_N^3(x,y)&=&-\tilde A_3\tilde A_2 (x^2+y^2)C_1^{(\alpha)}(x,y)+
(x^2+y^2)\sum_{n=2}^N\tilde A_{n+1}(\tilde A_n-\tilde A_{n+2})C_n^{(\alpha)}(x,y)\\
&&+\tilde A_{N+1}\tilde A_N (x^2+y^2)C_N^{(\alpha)}(x,y)\\
&=&\Phi_N^{3,1}(x,y)+\Phi_N^{3,2}(x,y)+\Phi_N^{3,3}(x,y).
\end{eqnarray*}
Of course
$$
\norm{\Phi_N^{3,1}}_{L^p(I, \omega_{\alpha})\to L^p(I, \omega_{\alpha})}\lesssim 1
$$
while Lemma \ref{lem:projjacobi} shows that
$$
\norm{\Phi_N^{3,2}}_{L^p(I, \omega_{\alpha})\to L^p(I, \omega_{\alpha})}\lesssim
\sum_{n=2}^N \frac{1}{n^2}n^{\alpha_p}\lesssim N^{\alpha_p-1}
$$
since $|\tilde A_{n+1}(\tilde A_n-\tilde A_{n+2})|\lesssim n^{-2}$
and
$$
\norm{\Phi_N^{3,3}}_{L^p(I, \omega_{\alpha})\to L^p(I, \omega_{\alpha})}\lesssim N^{\alpha_p}.
$$

Using Abel summation again, we write
\begin{eqnarray*}
\Phi_N^4(x,y)&=&-2\kappa_2 C_1^{(\alpha)}(x,y)+2\sum_{n=2}^N(\kappa_n-\kappa_{n+1})C_n^{(\alpha)}(x,y)
+2\kappa_N y^2C_N^{(\alpha)}(x,y)\\
&=&\Phi_N^{4,1}(x,y)+\Phi_N^{4,2}(x,y)+\Phi_N^{4,3}(x,y).
\end{eqnarray*}
Again
$$
\norm{\Phi_N^{4,1}}_{L^p(I, \omega_{\alpha})\to L^p(I, \omega_{\alpha})}\lesssim 1
$$
while Lemma \ref{lem:projjacobi} shows that
$$
\norm{\Phi_N^{4,2}}_{L^p(I, \omega_{\alpha})\to L^p(I, \omega_{\alpha})}\lesssim
\sum_{n=2}^N \frac{1}{n^2}n^{\alpha_p}\lesssim N^{\alpha_p-1}
$$
since $|\kappa_n-\kappa_{n+1}|\lesssim n^{-2}$
and
$$
\norm{\Phi_N^{4,3}}_{L^p(I, \omega_{\alpha})\to L^p(I, \omega_{\alpha})}\lesssim N^{\alpha_p}.
$$

A last use of Abel summation leads to
\begin{eqnarray*}
\Phi_N^5(x,y)&=&-\tilde\kappa_2 \Phi_1^{(\alpha)}(x,y)+\sum_{n=2}^N(\tilde\kappa_n-\tilde\kappa_{n+1})\Phi_n^{(\alpha)}(x,y)
+\tilde\kappa_N \Phi_N^{(\alpha)}(x,y)\\
&=&\Phi_N^{5,1}(x,y)+\Phi_N^{5,2}(x,y)+\Phi_N^{5,3}(x,y).
\end{eqnarray*}
Of course
$$
\norm{\Phi_N^{5,1}}_{L^p(I, \omega_{\alpha})\to L^p(I, \omega_{\alpha})}\lesssim 1.
$$
For the two other terms, we will use the fact that $\norm{\Phi_N}_{L^p(I, \omega_{\alpha})\to L^p(I, \omega_{\alpha})}\lesssim N^{1+\alpha_p}$ and that $\tilde\kappa_n=n^{-1}+O(n^{-2})$, in particular $|\tilde\kappa_n-\tilde\kappa_{n+1}|\lesssim n^{-2}$.
It follows that
$$
\norm{\Phi_N^{5,2}}_{L^p(I, \omega_{\alpha})\to L^p(I, \omega_{\alpha})}\lesssim \sum_{n=2}^Nn^{-2}n^{1+\alpha_p}\lesssim N^{\alpha_p}
$$
and
$$
\norm{\Phi_N^{5,3}}_{L^p(I, \omega_{\alpha})\to L^p(I, \omega_{\alpha})}\lesssim N^{-1}N^{1+\alpha_p}\lesssim N^{\alpha_p}.
$$

Summing all terms, Condition $(B)$ of Theorem \ref{th:main} is satisfied.

\subsection{Conclusion}

It remains to conclude, all conditions of Theorem \ref{th:main} are satisfied. Therefore, the Weighted prolate spheroidal series
converges in $L^p(I,\omega_\alpha)$ if and only if the Jacobi series converge. The later ones converge in $L^p(I,\omega_\alpha)$
if and only if $p\in(p_0,p_0^\prime)$. We have thus proved the following:

\begin{theorem}
Let $\alpha>-1/2$ and $c>0$, $N\geq 0$. Let $p_0=2-\frac{1}{\alpha+3/2}$ so that $p_0^{\prime}=\dst 2+\frac{1}{\alpha+1/2}$.

Let $(\psi_{n,c}^{(\alpha)})_{n\geq 0}$ be the family of weighted prolate spheroidal wave functions.
For a smooth function $f$ on $I=(-1,1)$, define
$$
\Psi^{(\alpha)}_Nf=\sum_{n=0}^N\scal{f,\psi_{n,c}^{(\alpha)}}_{L^2(I,\omega_\alpha)}\psi_{n,c}^{(\alpha)}.
$$
Then, for every $p\in(1,\infty)$, $\Psi^{(\alpha)}_N$ extends to a bounded operator $L^p(I,\omega_\alpha)\to L^p(I,\omega_\alpha)$.
Further
$$
\Psi^{(\alpha)}_Nf\to f\qquad \mbox{in }L^p(I,\omega_\alpha)
$$
for every $f\in L^p(I,\omega_\alpha)$ if and only if $p\in(p_0,p_0^\prime)$.
\end{theorem}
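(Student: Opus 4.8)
The plan is to read this statement as a direct instance of the transference principle of Theorem~\ref{th:main}, taking $\Omega=I=(-1,1)$, $\mu=\omega_\alpha$, the auxiliary orthonormal basis $\ffi_n=\wJ_n$ to be the Jacobi polynomials, and $\psi_n=\ps$ the weighted prolates. The only structural point that needs care beforehand is that the coefficients $\beta_k^{(n)}=\scal{\ps,\wJ_k}_{L^2(I,\omega_\alpha)}$ obey the recursion~\eqref{rec:WPSFs}, which couples indices differing by $2$ and is therefore not yet the three-term recursion demanded by condition $(R)$. Since $\ps$ and $\wJ_n$ inherit the parity of $n$, one has $\beta_k^{(n)}=0$ whenever $k$ and $n$ have opposite parity; I would exploit this by decomposing $L^p(I,\omega_\alpha)=L^p_e\oplus L^p_o$ and running the whole argument separately on the even and the odd part. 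After the reindexing $k\mapsto 2m$ (resp. $k\mapsto 2m+1$), the step-two recursion~\eqref{rec:WPSFs} becomes a genuine three-term recursion in $m$, and the reindexed multiplier $f(2m,2\nu)$ retains the product form required by Lemma~\ref{lem:fkn}.

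With this reduction, I would check the five hypotheses of Theorem~\ref{th:main} in turn, each having been prepared above. Condition $(L)$ is the sharp $L^p$--$L^{p'}$ norm estimate~\eqref{normlplq} for Jacobi polynomials recorded in Lemma~\ref{lem:projjacobi}, which pins down $p_0$ and yields $\alpha_p=0$ exactly on $(p_0,p_0')$ and $\alpha_p<1$ on a slightly larger window. Condition $(C)$ is precisely Pollard's mean-convergence theorem for Jacobi series, again in Lemma~\ref{lem:projjacobi}. Condition $(D)$ follows from Lemma~\ref{lem:condD} applied to both families: the $\wJ_n$ are eigenfunctions of $Lf=(1-x^2)f''-(2\alpha+1)xf'$ with $|\lambda_n|=n(n+2\alpha+1)\gtrsim n^2$, the $\ps$ are eigenfunctions of the Sturm--Liouville operator $\mathcal L_c^{(\alpha)}$ of~\eqref{diff_oper1} with eigenvalues $\chi_n(c)\gtrsim n^2$ by~\eqref{boundschi}, and both satisfy $(L)$ --- the prolates by Lemma~\ref{lem:lppsi}. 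Condition $(R)$ reduces to Lemma~\ref{lem:fkn} together with the uniform bound $|a_k^{(\alpha)}|\le 1/2$ for $k\ge 2$ (and $a_k^{(\alpha)}\to 1/4$). Finally, condition $(B')$ is supplied by the estimates of the preceding subsection.

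The first assertion, that $\Psi^{(\alpha)}_N$ extends to a bounded operator on $L^p(I,\omega_\alpha)$ for each fixed $N$ and each $p\in(1,\infty)$, needs no uniformity and is immediate: by Lemma~\ref{lem:lppsi} we have $\ps\in L^p(I,\omega_\alpha)$ for all $p$, so the finite-rank kernel $\sum_{n=0}^N\ps(x)\overline{\ps(y)}$ defines a bounded operator through Lemma~\ref{lem:triv}. For the convergence statement, Theorem~\ref{th:main} then gives $\Psi^{(\alpha)}_Nf\to f$ in $L^p$ for every even (resp. odd) $f$ if and only if $p\in(p_0,p_0')$; combining the two subspaces yields the claim on all of $L^p(I,\omega_\alpha)$, the sharp threshold being inherited from the Jacobi case through condition $(C)$.

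The step I expect to be the genuine obstacle is condition $(B')$: the sublinear bound $\norm{\hat\Phi_N}_{L^p\to L^p}\lesssim N^{\beta_p}$ with $\beta_p<1$ for the off-diagonal kernel assembled from $\wJ_n$ and $\wJ_{n+2}$, since the crude estimate via Lemma~\ref{lem:triv} only delivers the useless bound $N^{1+\alpha_p}$. The device I would use is to apply the recursion~\eqref{recursion} twice so as to rewrite $\wJ_n(x)\wJ_{n+2}(y)$ as a combination of the diagonal block $\wJ_n(x)\wJ_n(y)$ (carrying a harmless prefactor $y^2$) and the nearby block $\wJ_n(x)\wJ_{n-2}(y)$, with coefficients $\kappa_n=-1+n^{-1}+O(n^{-2})$ and $\tilde\kappa_n=n^{-1}+O(n^{-2})$. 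A summation by parts then reduces everything to the Christoffel--Darboux projections $C_n^{(\alpha)}$, whose norms grow like $n^{\alpha_p}$ by Lemma~\ref{lem:projjacobi}; because the successive differences of the coefficients are $O(n^{-2})$, the telescoped sums $\sum n^{\alpha_p-2}$ together with the boundary terms all stay within $N^{\alpha_p}$, giving $\beta_p=\alpha_p<1$ throughout the relevant range of $p$ and closing the argument.
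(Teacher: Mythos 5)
Your proposal is correct and follows essentially the same route as the paper: parity decomposition to turn the step-two recursion \eqref{rec:WPSFs} into a genuine three-term recursion, verification of $(L)$, $(C)$, $(D)$ via \eqref{normelplq}, Lemma \ref{lem:projjacobi} and Lemma \ref{lem:condD}, condition $(R)$ via Lemma \ref{lem:fkn} plus the bound $|a_k^{(\alpha)}|\leq 1/2$, and condition $(B')$ by applying \eqref{recursion} twice and summing by parts. The only imprecision is in your last step: the $\tilde\kappa_n$-weighted off-diagonal block does not telescope to the Christoffel--Darboux projections $C_n^{(\alpha)}$ but to the shifted kernels $\Phi_n$ themselves, which the paper handles with the crude bound $\norm{\Phi_n}\lesssim n^{1+\alpha_p}$ (made harmless by $|\tilde\kappa_n-\tilde\kappa_{n+1}|\lesssim n^{-2}$); alternatively, since $\tilde\kappa_n=O(n^{-1})$, a term-by-term application of Lemma \ref{lem:triv} already gives $\sum_n n^{\alpha_p-1}\lesssim N^{\alpha_p}$, so the gap is cosmetic.
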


\section{Application to circular prolate spheroidal wave functions}

For two real numbers  $c>0$ and $\alpha>-\frac{1}{2}$, the family of the circular prolate spheroidal wave functions (CPSWFs), 
introduced by D. Slepian \cite{Slepian3}
and denoted by $\vp$, are the eigenfunctions of the finite Hankel transform  $\mathcal H_c^{\alpha}$, the operator on 
$L^2[0,1]$ with kernel given by $\mathcal H_c^{\alpha}(x,y)= \sqrt{cxy}J_{\alpha}(cxy)$. On other words
$$
\mathcal H_c^{\alpha}f(x)=\int_0^1
\sqrt{cxy}J_{\alpha}(cxy) f(y)\d y.
$$
We denote by $\mu_{n,\alpha}(c)$ the family of the eigenvalues of the operator $\mathcal H_c^{\alpha}$, that is $\mathcal H_c^{\alpha}\vp =
\mu_{n,\alpha}(c)\vp$.
The functions $\vp$ satisfy the following orthogonality relations:
$$
\int_{0}^{1}\psi_{n,c}^{\alpha}(x)\psi_{m,c}^{\alpha}(x)\d x
=\delta_{n,m}\quad\mbox{and}\quad
\int_{0}^{+\infty}\psi_{n,c}^{\alpha}(x)\psi_{m,c}^{\alpha}(x)\d x=
\frac{\delta_{n,m}}{c\mu_{n,\alpha}^2(c)}
$$
and the $\vp$'s constitute a complete orthonormal system in $L^2[0,1]$.

The $\vp$'s are also related to the Hankel operator $\mathcal{H}^{\alpha}$, the
integral operator on $L^2[0,+\infty[$ with kernel given by $\mathcal H^{\alpha}(x,y)=\sqrt{xy}J_{\alpha}(xy)$. More precisely,
$$
\mathcal H^{\alpha}(\psi_{n,c}^{\alpha})(x)=\frac{1}{c\mu_{n,\alpha}(c)}\psi_{n,c}^{\alpha}\left(\frac{x}{c}\right)\chi_{[0,c]}(x).
$$
According to Plancherel's theorem, the family
$\psi_{n,c}^{(\alpha)}=\sqrt{c}|\mu_{n,\alpha}(c)|\vp$ constitute a
complete orthonormal system in $\mathcal B_c^{\alpha}$ defined by:
\begin{equation}
\mathcal B_c^{\alpha}=\{ f\in L^2(0,\infty);\,\ \supp (\mathcal H^{\alpha}(f))\subset [0,c]\}.
\end{equation}
Fore more details, see for example \cite{Slepian3,Karoui-Boulsen}.
Our first aim in this section is to prove that in the case of CPSWFs, we have mean convergence in the Hankel Paley-Wiener space $B_{c,p}^{\alpha}$ defined by $$B_{c,p}^{\alpha}=\left\{f\in L^p(0,\infty); \supp(\mathcal{H^{\alpha}}(f))\subseteq[0,c]\right\},$$ if and only if $ 4/3< p <4.$

\subsection{Some facts about Spherical Bessel function}

\subsubsection{Spherical Bessel function}
The spherical Bessel function is defined as
\begin{equation}
\label{eq:sphbess}
\jc{n}(x)=\sqrt{2(2n+\alpha+1)}\frac{J_{2n+\alpha+1}(cx)}{\sqrt{cx}}.
\end{equation}
Here, $J_{\alpha}$ is the Bessel function of the first kind and order $\alpha.$
The spherical Bessel functions satisfy the orthogonality relation,
$$
\int_{0}^{+\infty}\jc{n}(x)\jc{m}(x)\d x=\delta_{n,m}.
$$
Their Hankel transforms are given by, see for example \cite{Slepian3}
\begin{equation}
\label{eq:besseljacobi}
\mathcal H^{\alpha}(\jc{n})(x)=\frac{\sqrt{2(2n+\alpha+1)}}{c}\left(\frac{x}{c}\right)^{\alpha+\frac{1}{2}}
P_n^{(\alpha,0)}\left(1-2\left(\frac{x}{c}\right)^2\right)
\chi_{[0,c]}(x).
\end{equation}
where $P_n^{(\alpha,0)}$ is the Jacobi polynomials of degree $n$ and
parameter $\alpha$, normalized so that
$P_n^{(\alpha,0)}(1)=\dst\frac{\Gamma(n+\alpha+1)}{\Gamma(n+1)\Gamma(\alpha+1)}$.
Introducing
\begin{equation}
\label{eq:tk}
T_{n,\alpha}(x)=(-1)^n\sqrt{2(2n+\alpha+1)}x^{\alpha+\frac{1}{2}}P_n^{(\alpha,0)}(1-2x^2).
\end{equation}
We thus get $\hh^{\alpha}(\jc{n})(x)=c^{-1}\chi_{[0,1]}(x/c) T_{n,\alpha}(x/c)$. Note that the orthogonality
relations of the $\jc{n}$'s and the unitary character of $\hh^{\alpha}$ imply that
$(T_{n,\alpha})_{n\ge 0}$ is an orthonormal basis of $L^2[0,1]$ while
the spherical Bessel functions $\jc{n}$ form a complete orthonormal system in $\mathcal B_c^{\alpha}$.

Further, using the induction property $\dst \frac{2\beta}{x}J_\beta(x)=J_{\beta-1}(x)+J_{\beta+1}(x)$,
we get the following induction formula
$$
\jc{n+1}=\frac{2\sqrt{(2n+\alpha+2)(2n+\alpha+3)}}{cx}\jc{n+1/2}-\frac{\sqrt{2n+\alpha+3}}{\sqrt{2n+\alpha+1}}\jc{n}.
$$
Moreover, for $1<p<\infty$, we have
\begin{equation}
	\label{eq:lpnorm}
	\norm{\jc{n}}_{L^p(0,\infty)} \sim \begin{cases}
		n^{-\frac{1}{2}+\frac{1}{p}}&\mbox{when }1<p<4\\
		n^{-\frac{1}{4}}\log n&\mbox{when }p=4\\
		n^{-\frac{1}{3}+\frac{1}{3p}}&\mbox{when }p>4
	\end{cases}.
\end{equation}
Note that, if $\frac{1}{p}+\frac{1}{q}=1$, then for  $\ell\in\Z$, we have
\begin{equation}
	\label{eq:lpnormprod}
\norm{\jc{n+\ell}}_{L^p(0,\infty)}\norm{\jc{n}}_{L^q(0,\infty)} \sim
\begin{cases}
	n^{\frac{2}{3p}-\frac{1}{2}}&\mbox{when }1<p<\frac{4}{3}\\
	\log n&\mbox{when }p=\frac{4}{3}\mbox{ or }p=4\\
	1&\mbox{when }\frac{4}{3}<p<4\\
	n^{\frac{1}{6}-\frac{2}{3p}}&\mbox{when }p>4
\end{cases}=o(n^{\frac{1}{6}})
\end{equation}
where the constants depend on $\ell$, for more details see \cite{BC}. We can now see that condition (L) is satisfied.

The expansion of  $\psi_{n,c}^{(\alpha)}$ in the basis of the spherical Bessel functions is done as follows.
First, by using \eqref{eq:sphbess}, we  calculate the scalar product
\begin{eqnarray*}
\scal{\psi_{n,c}^{(\alpha)},\jc{n}}_{L^2[0,+\infty[}
&=&\int_{0}^{+\infty}\sqrt{c}|\mu_{n,\alpha}(c)|\psi_{n,c}^{\alpha}(x)\jc{n}(x)\d x\\
&=&\sqrt{c}|\mu_{n,\alpha}(c)|\int_{0}^{+\infty}\psi_{n,c}^{\alpha}(x)\sqrt{2(2n+\alpha+1)}\frac{J_{2n+\alpha+1}(cx)}{\sqrt{cx}}\d x.
\end{eqnarray*}
Writing $\dst \nu=\sqrt{c}\frac{|\mu_{n,\alpha}(c)|}{\mu_{n,\alpha}(c)}\sqrt{2(2n+\alpha+1)}$
and since
$$
\dst\psi_{n,c}^{\alpha}(x)=\frac{1}{\mu_{n,\alpha}(c)}\hh_c^\alpha\psi_{n,c}^{\alpha}(x)
=\frac{1}{\mu_{n,\alpha}(c)}\int_0^1
\sqrt{cxy}J_{\alpha}(cxy) \psi_{n,c}^{\alpha}(y)\d y,
$$
then Fubini's theorem together with \eqref{eq:besseljacobi}, we get
\begin{eqnarray*}
\scal{\psi_{n,c}^{(\alpha)},\jc{n}}_{L^2[0,+\infty[}
&=&\nu\int_{0}^{1}\sqrt{y}\psi_{n,c}^{\alpha}(y)
\int_{0}^{+\infty}J_{2n+\alpha+1}(cx)J_{\alpha}(cxy)\d x\d y\\
&=&\frac{\nu}{c}\int_{0}^{1}y^{\alpha+\frac{1}{2}}P_n^{(\alpha,0)}(1-2y^2)\psi_{n,c}^{\alpha}(y)\d y.
\end{eqnarray*}
We thus have
$$
\scal{\psi_{n,c}^{(\alpha)},\jc{n}}_{L^2[0,+\infty[}=
(-1)^k\frac{|\mu_{n,\alpha}(c)|}{\sqrt{c}\mu_{n,\alpha}(c)}\scal{\vp,T_{n,\alpha}}_{L^2[0,1]}
$$
where $T_{n,\alpha}$ has been defined in \eqref{eq:tk}.
Writing $d_k^n=\scal{\vp,T_{k,\alpha}}_{L^2[0,1]}$, we thus get the following expansion on $[0,+\infty)$:
$$
\psi_{n,c}^{(\alpha)}(x)=\frac{|\mu_{n,\alpha}(c)|}{\sqrt{c}\mu_{n,\alpha}(c)}\sum_{k\ge 0}{(-1)^kd_k^n\jc{k}(x)}.
$$
Consider the differential operator given by
$$
\mathcal D_c^{\alpha}(\phi)(x)=-\frac{\d}{\d x}\ent{(1-x^2)\frac{\d}{\d x}}\phi(x)-\left(\frac{\frac{1}{4}-\alpha^2}{x^2}-c^2x^2\right)\phi(x).
$$
We know from \cite{Slepian3} that the operators $\mathcal D_c^{\alpha}$ and $\mathcal H_c^{\alpha}$ commute so that $\vp$ are eigenvectors of both
operators and we denote by $\chi_{n,\alpha}(c)$ the corresponding eigenvalue of $\dd_c^\alpha$, that is  
\begin{equation}
\label{eq:eigcircprol}
\dd_c^\alpha(\vp)=\chi_{n,\alpha}(c)\vp.
\end{equation}
Further more, we have the following inequality see \cite{Slepian3}:
\begin{equation}
\label{eq:slepeigest}
\left(\alpha+2n+\frac{1}{2}\right)\left(\alpha+2n+\frac{3}{2}\right)\le\chi_{n,\alpha}(c)
\le \left(\alpha+2n+\frac{1}{2}\right)\left(\alpha+2n+\frac{3}{2}\right)+c^2.
\end{equation}
According to \cite{Watson}, the Spherical Bessel functions are the eigenfunctions of the differential operator given by
\begin{equation*}
\mathcal{D}_c(\omega)=\left(\frac{x}{c}\right)^2\frac{d^2}{d^2x}(\omega)+\frac{(1+c)}{\sqrt{c}}\left(\frac{x}{c}\right)
\frac{d}{dx}(\omega)+c^2x^2\omega
\end{equation*}
and the corresponding eigenvalues are given by $\Big((2n+\alpha+1)^2-\frac{c^2-2c}{2c^3}\Big)\sqrt{2(2n+\alpha+1)}$.
It follows from Lemma \ref{lem:condD} that Condition $(D)$ of Theorem \ref{th:main}
is also satisfied.

If we substitute the expression of $\vp$ as a series of
Jacobi polynomials into \eqref{eq:eigcircprol}, we obtain the relations satisfied by the
coefficients $d_k^n$. More precisely, from \cite{Slepian3}, we obtain the three term recurrence relation
\begin{equation}
f(k,n,c,\alpha)d_k^n=a_{k,\alpha}d_{k-1}^n+a_{k+1,\alpha}d_{k+1}^n,~ \forall k\ge 0
\end{equation}
where $d_{-1}^{n}=0$ and
\begin{eqnarray}
\label{coeff-CPSWFs}
f(k,n,c,\alpha)&=&\displaystyle\frac{\chi_{n,\alpha}(c)-(\alpha+2k+\frac{1}{2})(\alpha+2k+\frac{3}{2})-c^2b_{k,\alpha}}{c^2} \\
a_{k,\alpha}&=&\frac{k(k+\alpha)}{(\alpha+2k)\sqrt{\alpha+2k+1}\sqrt{\alpha+2k-1}} \nonumber \\
b_{k,\alpha}&=&\frac{1}{2}\ent{\frac{\alpha^2}{(\alpha+2k+1)(\alpha+2k)}+1} \nonumber.
\end{eqnarray}

\subsubsection{The projection on the span of spherical Bessel functions}
Let $1<p<\infty$ and $\alpha\ge -\frac{1}{2}$.
For $n\geq 0$, let
$$
P_n^{(\alpha)}(x,y):=\sum_{k=0}^n\jc{k}(x)\jc{k}(y)
$$
and
$\pp_n^{(\alpha)}$ be the operator with kernel $P_n^{(\alpha)}(x,y)$. That is, $\pp_n^{(\alpha)}$ is the projection on
the span of $\{\jc{0},\ldots,\jc{n}\}$.

\begin{proposition}
\label{prop:projbessel}
Let $1<p<\infty$, $\alpha>-1/2$. Then the following estimate holds for
every $n$ and every $f\in L^p(0,\infty)$
$$
\norm{\pp_n^{(\alpha)}(f)}_{L^p(0,\infty)}\lesssim \begin{cases}\norm{f}_{L^p(0,\infty)}&\mbox{if }\dst\frac{4}{3}<p<4\\
n^{3/4}\norm{f}_{L^p(0,\infty)}&\mbox{otherwise}
\end{cases}
$$
with the implied constant independent of $f$ and $n$. 
\end{proposition}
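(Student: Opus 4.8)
The plan is to follow Pollard's treatment of the Jacobi projection (Lemma~\ref{lem:projjacobi}) and its prolate adaptation by Barcel\'o and C\'ordoba: the whole point is to exhibit $\pp_n^{(\alpha)}$ as a finite sum of modulated weighted Hilbert transforms whose weights are precisely those estimated in Lemma~\ref{lem:ap}. First I would convert the diagonal sum into a difference quotient. The spherical Bessel functions $\jc{k}$ satisfy a three-term recurrence obtained by applying the Bessel identity $\frac{2\beta}{z}J_\beta(z)=J_{\beta-1}(z)+J_{\beta+1}(z)$ twice, in order to pass from order $2k+\alpha+1$ to $2k+\alpha+3$; equivalently, after the change of variables $s=x^{2}$ the functions $s\mapsto\jc{k}(\sqrt s)$ are, up to a fixed amplitude and the unitary Hankel transform \eqref{eq:besseljacobi}--\eqref{eq:tk}, the Jacobi polynomials $P_k^{(\alpha,0)}$ in the variable $1-2s/c^{2}$. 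Either route yields a Christoffel--Darboux identity whose denominator is proportional to $x^{2}-y^{2}$, so that the leading part of the kernel is
\[
P_n^{(\alpha)}(x,y)\ \approx\ \lambda_n\,\frac{\jc{n+1}(x)\jc{n}(y)-\jc{n}(x)\jc{n+1}(y)}{x^{2}-y^{2}},\qquad \lambda_n=O(1).
\]
Under $s=x^{2},\ t=y^{2}$ this becomes a genuine Hilbert transform in $s$, which is exactly the variable in which $c\sqrt s$ appears in \eqref{eq:defomega34}. The extra lower-order terms created by the two applications of the recurrence are rank-one or near-diagonal; by Lemma~\ref{lem:triv} together with the product bound \eqref{eq:lpnormprod}, which is $o(n^{1/6})$, they contribute $\lesssim n^{1/6}\ll n^{3/4}$ and may be discarded.

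Next I would insert the uniform (Airy-type) asymptotics of $J_{2n+\alpha+1}(cx)$ with large order $\mu:=2n+\alpha+1\sim 2n$, valid across the oscillatory region $cx>\mu$, the turning-point layer $|cx-\mu|\lesssim\mu^{1/3}$, and the exponentially small region $cx<\mu$. Writing $\jc{n}(x)=a_n(x)\cos\phi_n(x)+(\text{error})$, the amplitude is comparable, in the variable $s$, to a fixed power of $s$ times $(|c\sqrt s-\mu|+\mu^{1/3})^{-1/4}$, which is precisely the profile appearing in $\omega_{\alpha,\pm}$ of \eqref{eq:defomega34}. Expanding the product of cosines in the numerator into exponentials $e^{\pm i(\phi_n(s)\mp\phi_n(t))}$ turns the leading part into a finite sum of modulated operators $g\mapsto e^{\pm i\phi}\hh(e^{\mp i\phi}g)$, the amplitudes being distributed à la Pollard between the output measure and the kernel. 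Keeping track of the Jacobian $\d s=2x\,\d x$ and of the fixed power factors, this produces exactly weights of the form $\omega_{\alpha,\pm}$, whose parameter falls within the admissibility range of Lemma~\ref{lem:ap}, and reduces the boundedness of $\pp_n^{(\alpha)}$ on $L^p(0,\infty)$ to that of $\hh$ on $L^p(\omega_{\alpha,+})$ and $L^p(\omega_{\alpha,-})$.

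Finally I would invoke Lemma~\ref{lem:ap} with $\mu=2n+\alpha+1$: the weights $\omega_{\alpha,\pm}$ are $A^p$ with $\ent{\omega_{\alpha,\pm}}_{A^p}\lesssim1$ for $\frac43<p<4$ and $\lesssim\mu^{3/4}\sim n^{3/4}$ otherwise. By the Hunt--Muckenhoupt--Wheeden theorem the Hilbert transform is then bounded on $L^p(\omega_{\alpha,\pm})$, and Petermichl's bound \eqref{eq:hilweight} gives $\norm{\hh}_{L^p(J,\omega_{\alpha,\pm})\to L^p(J,\omega_{\alpha,\pm})}\lesssim\ent{\omega_{\alpha,\pm}}_{A^p}^{\max(1,(p-1)^{-1})}$. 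For $p\ge2$ the exponent equals $1$, which yields both claimed bounds directly; for $1<p<2$ I would instead use that $\pp_n^{(\alpha)}$ is a self-adjoint projection (its kernel $P_n^{(\alpha)}(x,y)$ is real and symmetric on the unweighted space $L^2(0,\infty)$), so that $\norm{\pp_n^{(\alpha)}}_{L^p\to L^p}=\norm{\pp_n^{(\alpha)}}_{L^{p'}\to L^{p'}}$ with $p'>2$, reducing to the previous case. Assembling the leading and lower-order pieces proves the proposition, and the endpoints $\frac43,4$ reappear here as exactly the range in which $\ent{\omega_{\alpha,\pm}}_{A^p}$ stays bounded.

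The main obstacle is the middle step: obtaining control of $J_{2n+\alpha+1}(cx)$, hence of $\jc{n}$, \emph{uniformly} and simultaneously on all three regimes, and in particular across the turning-point layer where the Airy transition forces the $\mu^{1/3}$ regularization built into \eqref{eq:defomega34}, while keeping the error terms summable uniformly in $n$ and compatible with the $A^p$ structure. This bookkeeping is the technical heart of the adaptation of Barcel\'o--C\'ordoba to general $\alpha$, and it is also where the range $\frac43<p<4$ is genuinely pinned down.
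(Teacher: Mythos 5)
Your high-level skeleton matches the paper's proof: reduce $\pp_n^{(\alpha)}$ to weighted Hilbert transforms after the substitution $s=x^2$, feed the weights with the $\mu^{1/3}$-regularized profile into Lemma \ref{lem:ap}, apply Hunt--Muckenhoupt--Wheeden/Petermichl, and close the case $1<p<2$ by self-adjointness. (Your ordering of that last step --- direct estimate for $p\geq 2$, where Petermichl's exponent is $1$, then duality for $p<2$ --- is in fact the correct one, since applying Petermichl directly for $p\leq 4/3$ would produce $\ent{\omega}_{A^p}^{(p-1)^{-1}}\lesssim \mu^{3/(4(p-1))}$, worse than $\mu^{3/4}$.) But there is a genuine gap in your first step. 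When you run the Christoffel--Darboux/recurrence argument on the $\jc{k}$'s, the telescoping does not close at $k=0$: multiplication by $x^{-2}$ sends $\jc{0}$ (Bessel order $\alpha+1$) partly onto a function of order $\alpha-1$, which is not zero and not in the family. The identity therefore carries a boundary term built from Bessel functions of \emph{fixed} order divided by $x^2-y^2$; this is exactly the first half of Varona's formula \eqref{eq:decomppn}, which the paper uses,
$$
\frac{(xy)^{1/2}}{x^2-y^2}\bigl\{xJ_{\alpha+1}(cx)J_{\alpha}(cy)-yJ_{\alpha+1}(cy)J_{\alpha}(cx)\bigr\}.
$$
This term is neither rank-one nor near-diagonal: since $|J_\alpha(cy)|\lesssim y^{-1/2}$, along the diagonal the kernel behaves like $\frac{1}{2(x-y)}$, so it is a genuine Hilbert-transform-type singular kernel, it does not lie in $L^p(\mu)\otimes L^{p'}(\mu)$, and Lemma \ref{lem:triv} simply does not apply to it. It cannot be discarded; it must itself be bounded as a weighted Hilbert transform (in the paper these are the operators $\Omega_1,\Omega_2$, handled with the power weights $y^{(p-1)/2}$ and $x^{-1/2}$, which are $A^p$ for all $1<p<\infty$ uniformly in $n$ --- so your statement survives once this term is actually treated).

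A second, related problem: even for the pieces where Lemma \ref{lem:triv} does apply, "discarding" anything with a bound $\lesssim n^{1/6}$ is fatal to the main assertion, because for $4/3<p<4$ the proposition claims a bound \emph{independent of} $n$; an $O(n^{1/6})$ loss is only tolerable in the complementary range where $n^{3/4}$ is allowed. Finally, the step you yourself flag as the "main obstacle" --- uniform Airy asymptotics of $J_{2n+\alpha+1}$ with phase functions and modulated operators $e^{\pm i\phi}\hh(e^{\mp i\phi}\cdot)$ --- is left unexecuted, and the paper's route shows it is unnecessary. Each term of the exact kernel identity \eqref{eq:decomppn} factorizes as $G_j(x)\,\ffi_j(y)/(x^2-y^2)$, so the operator is literally $f\mapsto G_j(x)\,\hh[\ffi_j](x^2)$: the $x$-amplitude becomes the weight, the $y$-amplitude is absorbed into the verification $\norm{\ffi_j}_{L^p(\omega_j)}\lesssim\norm{f}_{L^p(0,\infty)}$, and only the pointwise envelope bounds $|J_{\mu}(x)|\lesssim x^{-1/4}\bigl(|x-\mu|+\mu^{1/3}\bigr)^{-1/4}$ and $|J'_{\mu}(x)|\lesssim x^{-3/4}\bigl(|x-\mu|+\mu^{1/3}\bigr)^{1/4}$ are needed; no phase analysis enters at all. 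As written, then, your proposal identifies the right machinery but does not constitute a proof: the fixed-order singular term must be kept and estimated, and the large-order term must be handled through the factored structure (or a completed asymptotic analysis) rather than deferred.
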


\begin{proof}
The projection on the span of spherical Bessel functions has been studied by Varona \cite{J.L.VARONA} with a different normalization.
He considered
$$
j_n^{\alpha}(x)=\sqrt{2n+\alpha+1}J_{2n+\alpha+1}(\sqrt{x})x^{-\alpha/2-1/2}
$$
so that 
$$
\jc{n}=\sqrt{2}j_n^{\alpha}(c^2x^2)(cx)^{\alpha+1/2}.
$$

Next, if we define
$$
K_n(x,y)=\sum_{k=0}^nj_n^{\alpha}(x)j_n^{\alpha}(y)
$$ 
(with Varona's notation) then
$P_n^{(\alpha)}(x,y)
=2c^{2\alpha+1}(xy)^{\alpha+1/2}K_n(c^2x^2,c^2y^2)$.

Using Varona's computation \cite[page 69]{J.L.VARONA} we get

\begin{eqnarray}
P_n^{(\alpha)}(x,y)&=&\frac{(xy)^{1/2}}{x^2-y^2}\{xJ_{\alpha+1}(cx)J_{\alpha}(cy)-yJ_{\alpha+1}(cy)J_{\alpha}(cx)\}\nonumber\\
        &+&\frac{(xy)^{1/2}}{x^2-y^2}\{xJ^\prime_{\alpha+2n+2}(cx)J_{\alpha+2n+2}(cy)-yJ_{\alpha+2n+2}(cx)J^\prime_{\alpha+2n+2}(cy)\}.
				\label{eq:decomppn}
\end{eqnarray}

Now, recalling that $\hh$ denotes the Hilbert transform, it follows from \eqref{eq:decomppn} that
$$
\mathcal{P}_n^{(\alpha)}(f)(x)=\Omega_1(f)(x)-\Omega_2(f)(x)+\Omega_{3}(f)(x)-\Omega_{4}(f)(x)
$$
where
\begin{eqnarray*}
\Omega_1(f)(x)&=&\int_0^{\infty}\frac{(xy)^{1/2}}{x^2-y^2}xJ_{\alpha+1}(cx)J_{\alpha}(cy)f(y)\d y\\
	&=&\frac{x^{\frac{3}{2}}}{2}J_{\alpha+1}(cx)\mathcal{H}\big[y^{-1/4}J_{\alpha}(cy^{1/2})f(y^{1/2})\big](x^2).\\
\Omega_2(f)(x)&=&\int_0^{\infty}\frac{(xy)^{1/2}}{x^2-y^2}yJ_{\alpha}(cx)J_{\alpha+1}(cy)f(y)\d y\\
	&=&\frac{x^{\frac{1}{2}}}{2}J_{\alpha}(cx)\mathcal{H}\big[y^{1/4}J_{\alpha+1}(cy^{1/2})f(y^{1/2})\big](x^2).\\
\Omega_{3}(f)(x)&=&\int_0^{\infty}\frac{(xy)^{1/2}}{x^2-y^2}xJ'_{\alpha+2n+2}(cx)J_{\alpha+2n+2}(cy)f(y)\d y\\
	&=&\frac{x^{\frac{3}{2}}}{2}J'_{\alpha+2n+2}(cx)\mathcal{H}\big[y^{-1/4}J_{\alpha+2n+2}(cy^{1/2})f(y^{1/2})\big](x^2).\\
\Omega_{4}(f)(x)&=&\int_0^{\infty}\frac{(xy)^{1/2}}{x^2-y^2}yJ_{\alpha+2n+2}(cx)J'_{\alpha+2n+2}(cy)f(y)\d y\\
	&=&\frac{x^{\frac{1}{2}}}{2}J_{\alpha+2n+2}(cx)\mathcal{H}\big[y^{1/4}J'_{\alpha+2n+2}(cy^{1/2})f(y^{1/2})\big](x^2).
\end{eqnarray*}
Note that each of these operators is of the form
$$
\Omega_j(f)(x)=G_j(x)\hh\big[\ffi_j\big](x^2)
$$
so that
\begin{eqnarray*}
\norm{\Omega_j(f)}_{L_p(0,\infty)}^p&=&\int_0^\infty |G_j(x)|^p|\hh\big[\ffi_j\big](x^2)|^p\d x\\
&=&\int_0^\infty \frac{|G_j(\sqrt{x})|^p}{2\sqrt{x}}|\hh\big[\ffi_j\big](x)|^p\d x.
\end{eqnarray*}
But then, if we are able to find an upper bound $\omega_j\in A^p$ (see \cite{J.L.VARONA})  of $\dst \frac{|G_j(\sqrt{x})|^p}{2\sqrt{x}}\lesssim \omega_j(x)$, we obtain
$$
\norm{\Omega_j(f)}_{L_p(0,\infty)}^p\lesssim \norm{\hh\big[\ffi_j\big]}_{L_p\bigl((0,\infty),\omega_j(x)\d x\bigr)}^p
\lesssim \ent{\omega_j}_{A^p}^{\max(p,p')} \norm{\ffi_j}_{L_p\bigl((0,\infty),\omega_j(x)\d x\bigr)}^p.
$$
It then remains to prove that $\norm{\ffi_j}_{L_p\bigl((0,\infty),w_j(x)\d x\bigr)}^p\lesssim\norm{f}_{L_p(0,\infty)}^p$.

\smallskip

For $\Omega_1$,  $\ffi_1(y)=y^{-1/4}J_{\alpha}(cy^{1/2})f(x^{1/2})$. Further, we use the bound
$|J_{\alpha+1}(t)|\leq C_\alpha t^{-1/2}$ which allows us to chose $\omega_1(y)=y^{\frac{p-1}{2}}\in A^p$ since
$-1<\dst\frac{p-1}{2}<p-1$. Further
\begin{eqnarray*}
\norm{\ffi_1}_{L^p(\R_+,\omega_1(y)\d y)}^p&=&
\norm{y^{-1/4}J_{\alpha}(cy^{1/2})f(y^{1/2})}_{L^p(\R_+,\omega_1(y)\d y)}^p
\lesssim \int_0^\infty \abs{\frac{f(y^{1/2})}{(cy)^{1/2}}}^px^{\frac{p-1}{2}}\d y\\
&\lesssim&\norm{f}_{L^p(0,+\infty)}^p.
\end{eqnarray*}

\smallskip

We will now take care of $\Omega_2$. In this case
with $\ffi_2(y)=y^{1/4}J_{\alpha+1}(cy^{1/2})f(y^{1/2})$ and the same bound on the Bessel function shows that
we can chose $\omega_2(x)=x^{-\frac{1}{2}}\in A^p$.
\begin{eqnarray*}
\norm{\ffi_2}_{L^p(\R_+,\omega_2(x)\d x)}^p&=&
\norm{x^{1/4}J_{\alpha+1}(cx^{1/2})f(x^{1/2})}_{L^p(\R_+,\omega_2(x)\d x)}^p
\lesssim \int_0^\infty \abs{f(x^{1/2})}^px^{-\frac{1}{2}}\d x\\
&\lesssim&\norm{f}_{L^p(0,+\infty)}^p.
\end{eqnarray*}

\smallskip

The same reasoning would apply to $\Omega_{3},\Omega_{4}$ but with a bound that depends on $n$.
We thus need a more refined estimate which follows from \cite{BC}:
\begin{eqnarray*}
|J_{\mu}(x)|&\lesssim& x^{-\frac{1}{4}}\big(|x-\mu|+\mu^{\frac{1}{3}}\big)^{-\frac{1}{4}}\\
|J'_{\mu}(x)|&\lesssim& x^{-\frac{3}{4}}\big(|x-\mu|+\mu^{\frac{1}{3}}\big)^{\frac{1}{4}}.
\end{eqnarray*}
Set $\mu=\alpha+2n+2$. We may then take
\begin{equation*}
\omega_3(x)=x^{\frac{3p}{8}-\frac{1}{2}}\big(|c\sqrt{x}-\mu|+\mu^{\frac{1}{3}}\big)^{\frac{p}{4}}\quad\mbox{and}\quad
\omega_4(x)=x^{\frac{p}{8}-\frac{1}{2}}\big(|c\sqrt{x}-\mu|+\mu^{\frac{1}{3}}\big)^{-\frac{p}{4}}
\end{equation*}
By the Lemma \eqref{lem:ap}, $\omega_3$ and $\omega_4$ $\in A_p$ with $\ent{\omega_j}_{A^p}\lesssim 1$ if
$\dst\frac{4}{3}<p<4$ and  $\ent{\omega_j}_{A^p}\lesssim\mu^{3/4}$ otherwise.

Finally
$$
\ffi_3(x)=x^{-1/4}J_{\alpha+2n+2}(cx^{1/2})f(x^{1/2})
\quad\mbox{and}\quad
\ffi_4(x)=x^{1/4}J'_{\alpha+2n+2}(cx^{1/2})f(x^{1/2}).
$$
Note that
$$
|\ffi_3(x)|\lesssim x^{-3/8}\big(|c\sqrt{x}-\mu|+\mu^{\frac{1}{3}}\big)^{-\frac{1}{4}}|f(x^{1/2})|
\quad\mbox{and}\quad
|\ffi_4(x)|\lesssim x^{-1/8}\big(|c\sqrt{x}-\mu|+\mu^{\frac{1}{3}}\big)^{\frac{1}{4}}|f(x^{1/2})|.
$$
so that
$$
\norm{\ffi_j}_{L^p(\R_+,\omega_j(x)\d x)}^p
\lesssim \int_0^\infty x^{-\frac{1}{2}}|f(x^{1/2})|\d x\lesssim \norm{f}_{L^p(0,+\infty)}^p.
$$
It follows that $\norm{\Omega_j(f)}_{L^p(0,\infty)}\lesssim 1$ if $\dst\frac{4}{3}<p<4$ and
$\norm{\Omega_j(f)}_{L^p(0,\infty)}\lesssim n^{3/4}\norm{f}_{L^p(0,\infty)}$ for $1<p\leq \dst\frac{4}{3}$.
Grouping all estimates, the same holds for $\pp_n^{(\alpha)}$. Finally, as $\pp_n^{(\alpha)}$ is self-adjoint,
we also get the estimate $\norm{\pp_n^{(\alpha)}(f)}_{L^p(0,\infty)}\lesssim n^{3/4}\norm{f}_{L^p(0,\infty)}$ 
for $p\geq 4$.

\end{proof}

\subsection{Condition $(R)$}

We will now show that conditions $(R)$ of Theorem \ref{re:thmain} are satisfied, this is done in three lemmas.

\begin{lemma}
For every $k\geq 1$ and $\alpha>\dst-\frac{1}{2}$, $0\leq a_k^{(\alpha)}\leq \dst\frac{1}{2}$.
\end{lemma}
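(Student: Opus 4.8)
The plan is to treat nonnegativity and the upper bound separately. For the lower bound I would simply observe that for $k\geq 1$ and $\alpha>-\tfrac12$ each factor appearing in
$$
a_{k,\alpha}=\frac{k(k+\alpha)}{(\alpha+2k)\sqrt{\alpha+2k+1}\sqrt{\alpha+2k-1}}
$$
of \eqref{coeff-CPSWFs} is strictly positive: indeed $k\geq 1$, $k+\alpha>\tfrac12$, $\alpha+2k-1>\tfrac12$, and the two remaining factors are even larger. Hence $a_{k,\alpha}>0$, which settles $0\leq a_k^{(\alpha)}$.

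The upper bound is where the actual work lies, though it turns out to be light. The key observation I would exploit is the elementary identity
$$
4k(k+\alpha)=(\alpha+2k)^2-\alpha^2,
$$
combined with $\sqrt{\alpha+2k+1}\sqrt{\alpha+2k-1}=\sqrt{(\alpha+2k)^2-1}$ (both factors being positive). Setting $m=\alpha+2k$, these recast the coefficient as
$$
a_{k,\alpha}=\frac{m^2-\alpha^2}{4\,m\sqrt{m^2-1}},
$$
so that the desired $a_{k,\alpha}\leq\tfrac12$ becomes equivalent to the single inequality $m^2-\alpha^2\leq 2m\sqrt{m^2-1}$.

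The last step is to discard the only genuinely $\alpha$-dependent term: since $\alpha^2\geq 0$ one has $m^2-\alpha^2\leq m^2$, so it is enough to verify $m^2\leq 2m\sqrt{m^2-1}$. As $m>0$ this is $m\leq 2\sqrt{m^2-1}$, and squaring (legitimate because $m>\tfrac32>1$) reduces it to $3m^2\geq 4$. This holds for every $k\geq 1$ and $\alpha>-\tfrac12$, since then $m=\alpha+2k>\tfrac32$, whence $m^2>\tfrac94>\tfrac43$. I do not anticipate any real obstacle here: the only thing to get right is spotting the identity $4k(k+\alpha)=(\alpha+2k)^2-\alpha^2$, after which the dependence on $\alpha$ only helps and the bound $\tfrac12$ comes out uniformly in $\alpha$ --- in pleasant contrast to the weighted-prolate analogue established earlier, whose proof required splitting into several ranges of $\alpha$.
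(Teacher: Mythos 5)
Your proof is correct, and it takes a genuinely different and cleaner route than the paper's. The paper proves the bound by first treating $k=1$ by hand (a monotonicity analysis in $\alpha$ of $a_{1,\alpha}=\frac{\sqrt{1+\alpha}}{(2+\alpha)\sqrt{\alpha+3}}$), then writing $a_{k,\alpha}=\frac{1}{4}\psi(1/2k)$ with
$$
\psi(x)=\frac{1+2\alpha x}{(1+\alpha x)\bigl(1+(\alpha-1)x\bigr)^{1/2}\bigl(1+(\alpha+1)x\bigr)^{1/2}},
$$
and proving $\psi(x)\leq 2$ on $[0,1/4]$ (i.e.\ for $k\geq 2$) via a case split: for $-1/2<\alpha\leq 0$ a direct estimate, and for $\alpha>0$ a reduction to the positivity of the quadratic $1+2(\alpha-1)x+2\alpha(\alpha-1)x^2$, which for $0<\alpha<1$ requires locating its roots. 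Your substitution $m=\alpha+2k$ together with the identity $4k(k+\alpha)=m^2-\alpha^2$ (and $\sqrt{\alpha+2k+1}\sqrt{\alpha+2k-1}=\sqrt{m^2-1}$, legitimate since $m-1>1/2>0$) collapses all of this into the single inequality $m^2-\alpha^2\leq 2m\sqrt{m^2-1}$, which after discarding $-\alpha^2$ and squaring reduces to $3m^2\geq 4$; this holds with room to spare since $m>3/2$, uniformly in $k\geq 1$ and $\alpha>-1/2$, with no case analysis and no separate treatment of $k=1$. What your argument does not recover is the asymptotics $a_{k,\alpha}=\tfrac14+O(k^{-1})$, which the paper's normalization $\frac14\psi(1/2k)$ exhibits immediately and which is needed elsewhere (for condition $(R)$ via Lemma~\ref{lem:fkn}); but that is not part of the statement of this lemma, so your proof is a complete and preferable replacement for it.
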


\begin{proof}
For
$k=1$, $a_{1,\alpha}=\frac{\sqrt{1+\alpha}}{(2+\alpha)\sqrt{\alpha+3}}$ which is clearly $\leq 1/2$ when $\alpha\geq0$. It is easy to see
that $a_{1,\alpha}$ is increasing with
$\alpha\in(-1/2,\alpha_0)$ and decreasing with $\alpha\in(\alpha_0,0)$ where $\alpha_0=\frac{-3+\sqrt{5}}{2}$.
Finally, $a_{1,\alpha_0}\sim0.3$ so that $a_{1,\alpha}\leq 1/2$ for every $\alpha$.
Write
$$
|a_{k,\alpha}|
=\frac{1}{4}\frac{1+\frac{2\alpha}{2k}}{\left(1+\frac{\alpha}{2k}\right)\sqrt{1+\frac{\alpha+1}{2k}}\sqrt{1+\frac{\alpha-1}{2k}}}
=\frac{1}{4}\psi(1/2k),
$$
where $\dst\psi(x)=\frac{1+2\alpha x}{(1+\alpha x)\big(1+(\alpha-1)x\big)^{1/2}\big(1+(\alpha+1)x\big)^{1/2}}$. It is thus enough to show that
$\abs{\psi(x)}\leq 2$ for $x\in[0,1/4]$. Note that $\psi$ is non-negative
for $\alpha>-1/2$ and $x\leq 1$.
When $-1/2<\alpha\leq 0$, as $1+2\alpha x\leq 1+\alpha x$
and $1+(\alpha+1)x\geq 1$,
$$
\psi(x)\leq\frac{1}{\sqrt{1+(\alpha-1)x}}
\leq \frac{1}{\sqrt{1-3x/2}}\leq\frac{1}{\sqrt{5/8}}<2
$$
when $x\leq1/4$. 	When $\alpha> 0$, we first bound
$$
\psi(x)\leq\frac{1+2\alpha x}{(1+\alpha x)\big(1+(\alpha-1)x\big)}
$$
and it is enough to prove that, for $x>0$, we have
$$
1+2\alpha x\leq 2(1+\alpha x)\big(1+(\alpha-1)x\big)
=2+(4\alpha-2)x+2\alpha(\alpha-1)x^2.
$$
or, equivalently, $1+2(\alpha-1)x+2\alpha(\alpha-1)x^2\geq 0$.
When $\alpha\geq1$ this is obvious, while for $0<\alpha<1$, the roots of this equation are
$$
-\frac{1-\alpha+\sqrt{1-\alpha^2}}{2\alpha(1-\alpha)}<0<\frac{\sqrt{1-\alpha^2}-(1-\alpha)}{2\alpha(1-\alpha)}
=\frac{\sqrt{1+\alpha}-\sqrt{1-\alpha}}{2\alpha\sqrt{1-\alpha}}.
$$
The inequality is therefore satisfied as soon as $x\leq 1/2k$ with
$$
k\geq k_\alpha
:=\frac{\alpha\sqrt{1-\alpha}}{\sqrt{1+\alpha}-\sqrt{1-\alpha}}
=\frac{\alpha(1-\alpha+\sqrt{1-\alpha^2})}{2}.
$$
As $0<\alpha<1$, it is easy to see that $k_\alpha\leq 1$.
\end{proof}

Let us now estimate the $b_{k,\alpha}$'s:

\begin{lemma}
	\label{lem:2}
	For every $k$ and every $\alpha>-1/2$,
	$b_{k,\alpha}=\frac{1}{2}+\tilde\eta_{k,\alpha}$
	with $|\tilde\eta_{k,\alpha}|\leq \frac{1}{2}$.	
\end{lemma}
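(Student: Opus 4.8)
The plan is to isolate $\tilde\eta_{k,\alpha}$ explicitly and reduce the claim to a one-line inequality on the factors of the denominator. Since $b_{k,\alpha}=\frac12\bigl(1+\frac{\alpha^2}{(\alpha+2k+1)(\alpha+2k)}\bigr)$, the natural choice is $\tilde\eta_{k,\alpha}=b_{k,\alpha}-\frac12=\frac{\alpha^2}{2(\alpha+2k+1)(\alpha+2k)}$, and the desired bound $|\tilde\eta_{k,\alpha}|\le\frac12$ is equivalent to $\alpha^2\le\bigl|(\alpha+2k+1)(\alpha+2k)\bigr|$. First I would record that whenever the two factors $\alpha+2k$ and $\alpha+2k+1$ are both positive the quantity $\tilde\eta_{k,\alpha}$ is automatically nonnegative, so in that regime it suffices to bound it from above.

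For $k\ge1$ the factors are indeed both positive, and I would simply observe that each of them exceeds $|\alpha|$: if $\alpha\ge0$ this is clear since $\alpha+2k\ge\alpha+2>\alpha=|\alpha|$, while if $-\frac12<\alpha<0$ one has $\alpha+2k\ge\alpha+2>\frac32>\frac12>-\alpha=|\alpha|$, and the same for $\alpha+2k+1$. Multiplying the two inequalities gives $(\alpha+2k)(\alpha+2k+1)>\alpha^2$, hence $0\le\tilde\eta_{k,\alpha}<\frac12$ for every $k\ge1$. If one prefers an algebraic route, the identity $(\alpha+2k)(\alpha+2k+1)-\alpha^2=4k(k+\alpha)+(\alpha+2k)$ makes the positivity transparent, since both $4k(k+\alpha)$ and $\alpha+2k$ are positive when $k\ge1$ and $\alpha>-\frac12$.

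The only delicate case is $k=0$, where the product $\alpha(\alpha+1)$ changes sign as $\alpha$ crosses $0$, so one cannot argue through the absolute values of the individual factors. Here I would instead simplify directly: for $\alpha>-\frac12$ (and extending continuously at $\alpha=0$, where the expression is a priori of the form $0/0$), $\tilde\eta_{0,\alpha}=\frac{\alpha^2}{2\alpha(\alpha+1)}=\frac{\alpha}{2(\alpha+1)}$. Since $\alpha+1>\frac12>0$, the map $\alpha\mapsto\frac{\alpha}{\alpha+1}=1-\frac{1}{\alpha+1}$ is increasing on $(-\frac12,\infty)$ and takes values in $(-1,1)$ there, approaching $-1$ as $\alpha\downarrow-\frac12$; hence $|\tilde\eta_{0,\alpha}|<\frac12$. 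This limiting behaviour at $\alpha=-\frac12$ is precisely what makes the constant $\frac12$ sharp, and the sign change in the denominator at $k=0$ is the one genuine obstacle that must be treated separately from the uniform argument for $k\ge1$; everything else is a routine sign check.
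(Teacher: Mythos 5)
Your proposal is correct and follows essentially the same route as the paper: both write $\tilde\eta_{k,\alpha}=\frac{\alpha^2}{2(\alpha+2k+1)(\alpha+2k)}$ explicitly, bound it by noting each denominator factor exceeds $|\alpha|$, and handle $k=0$ via the cancellation $\frac{\alpha^2}{\alpha(\alpha+1)}=\frac{\alpha}{\alpha+1}$. The only differences are cosmetic (you split cases on $k$ where the paper splits on the sign of $\alpha$, and you add the sharpness remark as $\alpha\downarrow -\tfrac12$ and care about the $0/0$ form at $k=0$, $\alpha=0$, which the paper glosses over).
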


\begin{proof}
	From the definition of $b_{k,\alpha}$,
	$\tilde\eta_{k,\alpha}=\frac{\alpha^2}{2(\alpha+2k+1)(\alpha+2k)}$.	When $\alpha= 0$, $\eta=0$ and when $\alpha>0$ we directly get
	$\tilde\eta_{k,\alpha}\leq \frac{1}{2}$.
	When $-1/2<\alpha<0$, $\alpha+j>1/2>|\alpha|$ for every $j\geq 1$ thus
	$|\tilde\eta_{0,\alpha}|=\frac{|\alpha|}{2(\alpha+1)}\leq\frac{1}{2}$
	while for $k\geq 1$
	we directly get $0\leq\tilde\eta_{k,\alpha}=\frac{1}{2}\frac{|\alpha|}{\alpha+2k}\frac{|\alpha|}{\alpha+2k+1}\leq\frac{1}{2}$.
\end{proof}

The last step consists in establishing the bounds
for $\abs{f(k,n,c,\alpha)}$. But from \eqref{eq:slepeigest} and \eqref{coeff-CPSWFs}, it is straightforward to see that
$f(k,n,c,\alpha)$ satisfies the conditions of Lemma \ref{lem:fkn}. In summary

\begin{lemma}
	\label{lem:3}
	For every $\alpha>-1/2$, every $c>0$,
	
	--- for fixed $n$, $f(k,n,c,\alpha)\gtrsim k^2$, when $k$ is large enough;
	
	--- for every $n\geq c^2/2$, $k\geq0$, $k\not=n$, we have
	$$
	\abs{f(k,n,c,\alpha)}\geq 4\frac{|k-n|k+c^2}{c^2};
	$$
	
	--- for every $n\geq c^2/2$,
	$$
	\abs{\frac{a_{n+1}^{(\alpha)}}{f(n+1,n,c,\alpha)}-\frac{a_{n+2}^{(\alpha)}}{f(n+2,n+1,c,\alpha)}}\lesssim n^{-2}.
	$$
	
\end{lemma}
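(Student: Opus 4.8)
The plan is to read off all three assertions from the general Lemma~\ref{lem:fkn}, after rewriting $f(k,n,c,\alpha)$ so that it matches the template there. First I would clear the denominator and insert the two-sided localization of the eigenvalue \eqref{eq:slepeigest}: writing $\chi_{n,\alpha}(c)=(\alpha+2n+\frac12)(\alpha+2n+\frac32)+\tilde\chi_n$ with $0\le\tilde\chi_n\le c^2$, the definition \eqref{coeff-CPSWFs} gives
\begin{equation*}
c^2 f(k,n,c,\alpha)=\bigl(\alpha+2n+\tfrac12\bigr)\bigl(\alpha+2n+\tfrac32\bigr)-\bigl(\alpha+2k+\tfrac12\bigr)\bigl(\alpha+2k+\tfrac32\bigr)+e_{k,n},
\end{equation*}
with $e_{k,n}=\tilde\chi_n-c^2 b_{k,\alpha}$. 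The main term is precisely of the form $(an+b)(cn+d)-(ak+b)(ck+d)$ of Lemma~\ref{lem:fkn} with both leading coefficients equal to $2$, $b=\alpha+\frac12$ and $d=\alpha+\frac32$, so that the quantity $ac$ appearing in Lemma~\ref{lem:fkn} equals $4>0$.

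Next I would check the two remaining hypotheses of Lemma~\ref{lem:fkn}. The perturbation $e_{k,n}$ is bounded uniformly in $k,n$: indeed $0\le\tilde\chi_n\le c^2$ and, by Lemma~\ref{lem:2}, $b_{k,\alpha}=\frac12+\tilde\eta_{k,\alpha}\in[0,1]$, whence $|e_{k,n}|\le 2c^2$. For the coefficients, a direct expansion of $a_{k,\alpha}=\frac{k(k+\alpha)}{(\alpha+2k)\sqrt{\alpha+2k+1}\sqrt{\alpha+2k-1}}$ from \eqref{coeff-CPSWFs} gives $a_{k,\alpha}=\frac14+\tilde a_k$ with $|\tilde a_k|\lesssim k^{-1}$, so that $\ell=\frac14$. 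With these identifications Lemma~\ref{lem:fkn} applies and, dividing through by the fixed constant $c^2$, its conclusions (i) and (iii) yield at once the first assertion ($f(k,n,c,\alpha)\gtrsim k^2$ for $k$ large and $n$ fixed) and the third assertion (the difference of the two ratios is $\lesssim n^{-2}$ for $n$ large).

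For the second assertion I would not appeal to the statement of Lemma~\ref{lem:fkn} but to a direct one-sided estimate, since the claimed bound is the sharper $|f(k,n,c,\alpha)|\ge 4\frac{|k-n|k+c^2}{c^2}$. Writing $g(m)=(\alpha+2m+\frac12)(\alpha+2m+\frac32)$, a short computation gives the clean factorisation
\begin{equation*}
g(n)-g(k)=4(n-k)(n+k+\alpha+1).
\end{equation*}
For $k<n$ I would use the lower bound $\chi_{n,\alpha}(c)\ge g(n)$ from \eqref{eq:slepeigest} together with $b_{k,\alpha}\le 1$; for $k>n$ the upper bound $\chi_{n,\alpha}(c)\le g(n)+c^2$ together with $b_{k,\alpha}\ge 0$. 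In both regimes the factor $n+k+\alpha+1$ exceeds $\max(n,k)$ (as $\alpha>-\frac12$), so $|g(n)-g(k)|\gtrsim |k-n|k$, and comparing this quadratic gap against the bounded remainder $c^2(1-b_{k,\alpha})$ produces the stated estimate once $n$ is at least a fixed multiple of $c^2$.

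I expect the only genuinely delicate point to be the bookkeeping of constants in this last step: the additive $+c^2$ inside the bound and the one-sided control of $b_{k,\alpha}\in[0,1]$ must be balanced against the gap $4(n-k)(n+k+\alpha+1)$ to certify the explicit constant $4$ together with the threshold on $n$. Everything else is a mechanical verification of the hypotheses of Lemma~\ref{lem:fkn}, the two preliminary bounds $|a_{k,\alpha}|\le\frac12$ and $b_{k,\alpha}\in[0,1]$ having already been secured in the preceding lemmas.
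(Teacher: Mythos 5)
Your proposal takes the same route as the paper, whose entire ``proof'' of this lemma is the single sentence preceding it: that \eqref{eq:slepeigest} and \eqref{coeff-CPSWFs} make it straightforward to check the hypotheses of Lemma \ref{lem:fkn}. Your explicit identification --- $c^2f(k,n,c,\alpha)$ fits the template with both linear factors $2m+\alpha+\tfrac12$ and $2m+\alpha+\tfrac32$ (so $ac=4$), $e_{k,n}=\tilde\chi_n-c^2b_{k,\alpha}$ bounded via \eqref{eq:slepeigest} and Lemma \ref{lem:2}, and $a_{k,\alpha}=\tfrac14+O(k^{-1})$ --- is precisely the verification the paper leaves implicit, and it settles the first and third assertions exactly as intended.

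The point you flag as delicate in the second assertion is a genuine obstruction, not unfinished bookkeeping on your part. Your factorisation gives, for $k\not=n$ and $n\geq c^2/2$,
$$
\abs{c^2f(k,n,c,\alpha)}\;\geq\; 4|k-n|k+4|k-n|(n+\alpha+1)-c^2,
\qquad 4|k-n|(n+\alpha+1)\geq 4n+2\geq 2c^2+2 ,
$$
so matching the bound as printed, $4\bigl(|k-n|k+c^2\bigr)/c^2$, would require $4|k-n|(n+\alpha+1)\geq 5c^2$, i.e.\ a threshold of order $n\gtrsim\tfrac54c^2$ rather than $n\geq\tfrac12c^2$. In fact the printed inequality fails at the printed threshold: take $\alpha=0$, $c=2$, $n=2=c^2/2$, $k=3$; then $(\alpha+2k+\tfrac12)(\alpha+2k+\tfrac32)=48.75$, $c^2b_{3,0}=2$, and \eqref{eq:slepeigest} forces $\chi_{2,0}(2)\in[24.75,\,28.75]$, whence $\abs{f(3,2,2,0)}\leq 6.5$, below the claimed value $7$. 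If instead the intended bound is $\bigl(4|k-n|k+c^2\bigr)/c^2$ (the constant $4$ on the first term only), then your displayed inequality already closes the proof for all $n\geq c^2/2$, since $4|k-n|k+(4n+2)-c^2\geq 4|k-n|k+c^2+2$. Either way the discrepancy is harmless downstream: condition $(R)$ only requires $\abs{f(k,n)}\gtrsim k|k-n|$ for $n$ beyond some fixed $n_0$, which both your direct estimate and the appeal to Lemma \ref{lem:fkn} provide.
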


\subsection{Condition $(B')$}
It remains to check condition $(B')$ that is, to estimate the $L^p$ norm of the operator with kernel
$$
Q_N^{(\alpha)}(x,y)=\sum_{n=n_0}^N\big(\jc{n}(x)\jc{n+1}(y)+\jc{n+1}(x)\jc{n}(y)\big).
$$
\begin{lemma}
\label{lem:BC}
Let $1<p<\infty$ then, for every $f\in L^p(0,\infty)$
$$
\left(\int_0^{\infty}\abs{\int_0^{\infty}Q_N^{(\alpha)}(x,y)f(y)\d y}^p\d x\right)^{1/p}\lesssim N^{2/3}\norm{f}_{L^p(0,\infty)}.
$$
and the implied constant is independent of $N$ and $f$.
\end{lemma}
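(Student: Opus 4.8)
The plan is to follow the strategy used for the diagonal kernel in Proposition \ref{prop:projbessel} and in \cite{BC}, replacing the projection kernel $P_n^{(\alpha)}$ by the shifted kernel $Q_N^{(\alpha)}$. The first and most delicate step is to produce a Christoffel--Darboux type closed form for $Q_N^{(\alpha)}(x,y)$, in analogy with the formula \eqref{eq:decomppn} that Varona obtained for $P_n^{(\alpha)}$. Concretely, one expands each product $\jc{n}(x)\jc{n+1}(y)$ using the Bessel recurrence $\frac{2\beta}{t}J_\beta(t)=J_{\beta-1}(t)+J_{\beta+1}(t)$ and sums by parts, so that the bulk of the sum telescopes; what survives is finitely many low--index boundary terms (at $n=n_0$), which give an operator bounded uniformly in $N$, together with a single moving term carried by Bessel functions of large order $\mu=\alpha+2N+O(1)$ against the singular factor $(xy)^{1/2}/(x^2-y^2)$. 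In other words, I aim to reach an expression of the form
$$
Q_N^{(\alpha)}(x,y)=R_N(x,y)+\frac{(xy)^{1/2}}{x^2-y^2}\Bigl\{\,x\,J'_\mu(cx)J_\mu(cy)-y\,J_\mu(cx)J'_\mu(cy)+\cdots\Bigr\},
$$
where $R_N$ is uniformly bounded on $L^p(0,\infty)$ and the remaining bilinear terms are of the same type in $J_\mu(cx),J'_\mu(cx),J_\mu(cy),J'_\mu(cy)$.

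Once this is in hand, I would decompose the associated operator exactly as in Proposition \ref{prop:projbessel}, writing the moving part as a finite sum of pieces
$$
\Omega_j(f)(x)=G_j(x)\,\hh\bigl[\ffi_j\bigr](x^2),
$$
where $G_j$ is built from $J_\mu(cx)$ or $J'_\mu(cx)$ times a power of $x$, and $\ffi_j(y)=y^{b_j}J_\mu(cy^{1/2})f(y^{1/2})$ (or its derivative analogue). For each such piece the identity
$$
\norm{\Omega_j(f)}_{L^p(0,\infty)}^p=\int_0^\infty\frac{|G_j(\sqrt x)|^p}{2\sqrt x}\,\bigl|\hh[\ffi_j](x)\bigr|^p\,\d x
$$
reduces the bound to a weighted Hilbert transform estimate. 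Using the refined bounds $|J_\mu(t)|\lesssim t^{-1/4}(|t-\mu|+\mu^{1/3})^{-1/4}$ and $|J'_\mu(t)|\lesssim t^{-3/4}(|t-\mu|+\mu^{1/3})^{1/4}$, I would dominate $|G_j(\sqrt x)|^p/(2\sqrt x)$ by a weight $\omega_j$ of the type \eqref{eq:defomega34} with $\mu=\alpha+2N+O(1)$, and bound the transferred function by $\norm{\ffi_j}_{L^p((0,\infty),\omega_j\,\d x)}\lesssim\norm{f}_{L^p(0,\infty)}$, just as in Proposition \ref{prop:projbessel}. Lemma \ref{lem:ap} then gives $\ent{\omega_j}_{A^p}\lesssim1$ for $\frac43<p<4$ and $\ent{\omega_j}_{A^p}\lesssim\mu^{3/4}\lesssim N^{3/4}$ otherwise, and Petermichl's theorem \eqref{eq:hilweight} converts this into a bound for each $\Omega_j$; a duality argument as at the end of Proposition \ref{prop:projbessel} covers $p\geq4$ from the range $p\leq\frac43$.

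The main obstacle is twofold. Structurally, the shifted Christoffel--Darboux identity is harder to derive cleanly than in the weighted (Jacobi) case: the natural recurrence for the $\jc{n}$ mixes in the half--integer function $\jc{n+1/2}$ and a factor $1/x$, and so lacks the rigid three--term form in a bounded multiplier that made the Abel summation of $\Phi_N^3,\Phi_N^4,\Phi_N^5$ routine in the weighted prolate section. Quantitatively, the delicate point is to improve the crude estimate $\sum_{n}\norm{\jc{n}}_{L^p}\norm{\jc{n+1}}_{L^{p'}}\lesssim N^{7/6}$, obtained from \eqref{eq:lpnormprod} and Lemma \ref{lem:triv}, into the stated exponent $2/3$. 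The gain is precisely the cancellation encoded by the singular integral, which replaces a sum of $N$ rank--one operators by a single Hilbert transform against a moving--index weight; balancing the $A^p$ growth in $\mu\sim N$ against the Bessel decay (following \cite{BC}) is what produces the power $N^{2/3}$. Since $\tfrac23<1$, this is exactly the $\beta_p<1$ required by condition $(B')$, and the estimate then holds uniformly for all $1<p<\infty$.
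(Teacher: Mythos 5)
There is a genuine gap at the heart of your plan: the shifted Christoffel--Darboux formula you need never gets derived, and it is precisely the step that cannot be carried out in the clean form you posit. Because $\jc{n+1}$ has Bessel order $2n+\alpha+3$, two orders above $\jc{n}$, expanding $\jc{n}(x)\jc{n+1}(y)$ via \eqref{eq:watson} forces in the intermediate order $2n+\alpha+2$ and factors of $1/y$, $1/y^2$; when you then sum over $n$, the off-diagonal sum does \emph{not} telescope to ``finitely many boundary terms plus one moving term of order $\mu\sim 2N$.'' What actually comes out (this is the paper's computation) is a \emph{self-referential} identity: applying \eqref{eq:watson} twice gives
\begin{equation*}
\jc{n}(x)\jc{n+1}(y)+\jc{n+1}(x)\jc{n}(y)
=\gamma_n\left(\tfrac{1}{x^2}+\tfrac{1}{y^2}\right)\jc{n}(x)\jc{n}(y)
-(2+\kappa_n)\bigl(\jc{n-1}(y)\jc{n}(x)+\jc{n-1}(x)\jc{n}(y)\bigr)
-(6+\tilde\kappa_n)\jc{n}(x)\jc{n}(y)
\end{equation*}
with $\gamma_n\lesssim n^2$ and $\kappa_n,\tilde\kappa_n\lesssim n^{-1}$, and summing over $n$ makes the back-shifted products reproduce $-2Q_N^{(\alpha)}$ itself (up to boundary terms), together with $-6P_N^{(\alpha)}$ and a weighted diagonal sum. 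One then \emph{solves} for $Q_N^{(\alpha)}$ and bounds the pieces: the diagonal sum $\bigl(\tfrac1{x^2}+\tfrac1{y^2}\bigr)\sum\gamma_n\jc{n}(x)\jc{n}(y)$ is controlled by the estimate from \cite{BC}, $\norm{x^{-2}j_n^{\alpha}}_{L^p}\norm{j_n^{\alpha}}_{L^{p'}}+\norm{j_n^{\alpha}}_{L^p}\norm{x^{-2}j_n^{\alpha}}_{L^{p'}}=O(n^{-7/3})$, giving $\sum_{n\leq N}n^2\,n^{-7/3}\lesssim N^{2/3}$; the $P_N^{(\alpha)}$ term is handled by Proposition \ref{prop:projbessel}; the remaining sums by \eqref{eq:lpnormprod} and Lemma \ref{lem:triv}. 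No new weighted Hilbert-transform analysis is needed -- all the singular-integral work is already encapsulated in Proposition \ref{prop:projbessel}.

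Your exponent bookkeeping also does not close. Even granting your hypothetical closed formula, running the machinery of Proposition \ref{prop:projbessel} on moving terms of order $\mu\sim 2N$ gives, via Lemma \ref{lem:ap} and Petermichl's bound, a factor $\lesssim 1$ for $\tfrac43<p<4$ and $\lesssim \mu^{3/4}\sim N^{3/4}$ otherwise; nothing in your argument produces $N^{2/3}$, and the claim that ``balancing the $A^p$ growth against the Bessel decay'' yields that power is unsupported. In the paper the exponent $2/3$ has a completely different origin, namely the convergent-in-$n$ computation $\sum n^2\, n^{-7/3}$ above. (You are right that for condition $(B')$ any exponent $\beta_p<1$ suffices, so a correct proof along weighted-Hilbert-transform lines would still serve the application; but as a proof of the stated lemma, both the key identity and the claimed constant are missing.)
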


\begin{proof}
First, using the identity (see \cite{Watson}) 
\begin{equation}
\label{eq:watson}
\frac{2\nu}{x}J_{\nu}(x)=J_{\nu+1}(x)+J_{\nu-1}(x)
\end{equation}
twice, one gets
\begin{eqnarray*}
J_{2n+\alpha+3}(x)&=&\frac{4(2n+\alpha+1)(2n+\alpha+2)}{x^2}J_{2n+\alpha+1}(x)\\
&&-\frac{2(2n+\alpha+2)}{x}J_{2n+\alpha}(x)-J_{2n+\alpha+1}(x).
\end{eqnarray*}
so that
\begin{eqnarray*}
\lefteqn{J_{2n+\alpha+1}(x)J_{2n+\alpha+3}(y)+J_{2n+\alpha+1}(y)J_{2n+\alpha+3}(x)} \\
 &&\qquad\qquad=4(2n+\alpha+1)(2n+\alpha+2)\left(\frac{1}{x^2}+\frac{1}{y^2}\right)J_{2n+\alpha+1}(x)J_{2n+\alpha+1}(y) \\
&&\qquad\qquad\qquad
-2(2n+\alpha+2)\left(\frac{1}{y}J_{2n+\alpha}(y)J_{2n+\alpha+1}(x)+\frac{1}{x}J_{2n+\alpha}(x)J_{2n+\alpha+1}(y)\right) \\
&&\qquad\qquad\qquad -2J_{2n+\alpha+1}(x)J_{2n+\alpha+1}(y).
\end{eqnarray*}
Using again \eqref{eq:watson} for the middle term, we get
\begin{eqnarray*}
\lefteqn{J_{2n+\alpha+1}(x)J_{2n+\alpha+3}(y)+J_{2n+\alpha+1}(y)J_{2n+\alpha+3}(x)} \\
&&\qquad\qquad =4(2n+\alpha+1)(2n+\alpha+2)\left(\frac{1}{x^2}+\frac{1}{y^2}\right)J_{2n+\alpha+1}(x)J_{2n+\alpha+1}(y) \\
&&\qquad\qquad\ -2\frac{(2n+\alpha+2)}{2n+\alpha}\Bigl(\bigl(J_{2k+\alpha+1}(y)+J_{2k+\alpha-1}(y)\bigr)J_{2n+\alpha+1}(x)\\
&&\qquad\qquad\ +\bigl(J_{2n+\alpha+1}(x)+J_{2n+\alpha-1}(x)\bigr)J_{2n+\alpha+1}(y)\Bigr) \\
&&\qquad\qquad\ -2J_{2n+\alpha+1}(x)J_{2n+\alpha+1}(y).
\end{eqnarray*}

Next, since $\jc{n}(x)=\sqrt{2(2n+\alpha+1)}\frac{J_{2n+\alpha+1}(cx)}{\sqrt{cx}}$,
one obtains
\begin{eqnarray}
\lefteqn{\jc{n}(x)\jc{n+1}(y)+\jc{n}(y)\jc{n+1}(x)}\nonumber \\
 &&\qquad\qquad= \frac{4}{c^2}\sqrt{(2n+\alpha+1)(2n+\alpha+3)}(2n+\alpha+2)\left(\frac{1}{x^2}+\frac{1}{y^2}\right)
\jc{n}(x)\jc{n}(y) \nonumber\\
&&\qquad\qquad\ -2\frac{(2n+\alpha+2)}{2n+\alpha}\sqrt{\frac{2n+\alpha+3}{2n+\alpha-1}}\Bigl(\jc{n-1}(y)\jc{n}(x)+\jc{n-1}(x)\jc{n}(y)\Bigr) \nonumber\\
&&\qquad\qquad\ -2\sqrt{\frac{2n+\alpha+3}{2n+\alpha+1}}\left(\frac{2(2n+\alpha+2)}{2n+\alpha}+1\right)\jc{n}(x)\jc{n}(y).
\label{eq:jcnjcn+1}
\end{eqnarray}
Now we write
$$
\gamma_n=\frac{4}{c^2}\sqrt{(2n+\alpha+1)(2n+\alpha+3)}(2n+\alpha+2),
$$
so that $|\gamma_n|\lesssim n^2$,
$$
2\frac{(2n+\alpha+2)}{2n+\alpha}\sqrt{\frac{2n+\alpha+3}{2n+\alpha-1}}=2+\kappa_n,\qquad 0\leq\kappa_n\lesssim n^{-1}
$$
and 
$$
2\sqrt{\frac{2n+\alpha+3}{2n+\alpha+1}}\left(\frac{2(2n+\alpha+2)}{2n+\alpha}+1\right)
=6+\tilde\kappa_n,\qquad 0\leq\tilde\kappa_n\lesssim n^{-1}.
$$
Then, summing \eqref{eq:jcnjcn+1} from $n=n_0$ to $n=N$ gives
\begin{eqnarray*}
Q_N^{(\alpha)}(x,y)&=&\left(\frac{1}{x^2}+\frac{1}{y^2}\right)\sum_{n=n_0}^N \gamma_n\jc{n}(x)\jc{n}(y)\\
&&-(2+\kappa_{n_0})\Bigl(\jc{n_0-1}(y)\jc{n_0}(x)+\jc{n_0-1}(x)\jc{n_0}(y)\Bigr)\\
&&-2Q_N^{(\alpha)}(x,y)-\sum_{n=n_0}^N\kappa_n\Bigl(\jc{n}(y)\jc{n+1}(x)+\jc{n}(x)\jc{n+1}(y)\Bigr)\\
&&+(2+\kappa_{N})\Bigl(\jc{N}(y)\jc{N+1}(x)+\jc{N}(x)\jc{N+1}(y)\Bigr)\\
&&-6P_N^{(\alpha)}(x,y)+6P_{n_0-1}(x,y)-\sum_{n=n_0}^N\tilde\kappa_n\jc{n}(x)\jc{n}(y).
\end{eqnarray*}
It follows that
$$
Q_N^{(\alpha)}(x,y)=Q_{N,1}(x,y)+\cdots+Q_{N,7}(x,y).
$$
Using \eqref{eq:lpnormprod} and Lemma \ref{lem:triv} we get that
$$
\norm{Q_{N,2}}_{L^p(0,\infty)\to L^p(0,\infty)},\norm{Q_{N,6}}_{L^p(0,\infty)\to L^p(0,\infty)}\lesssim 1
$$
and
$$
\norm{Q_{N,4}}_{L^p(0,\infty)\to L^p(0,\infty)}\lesssim N^{1/6}
$$
while
$$
\norm{Q_{N,3}}_{L^p(0,\infty)\to L^p(0,\infty)}\lesssim \sum_{n=n_0}^N\frac{1}{n}n^{1/6}\lesssim N^{1/6}
$$
and
$$
\norm{Q_{N,7}}_{L^p(0,\infty)\to L^p(0,\infty)}\lesssim \sum_{n=n_0}^N\frac{1}{n}n^{1/6}\lesssim N^{1/6}.
$$
We have seen in Proposition \ref{prop:projbessel} that $\norm{Q_{N,5}}_{L^p(0,\infty)\to L^p(0,\infty)}\lesssim N^{3/4}$.

Concerning $Q_{N,1}$, we will use the following equality, see \cite{BC},
$$
\norm{x^{-2}j_n^{\alpha}}_{L^p(0,\infty)}\norm{j_n^{\alpha}}_{L^q(0,\infty)}+\norm{j_n^{\alpha}}_{L^p(0,\infty)}
\norm{x^{-2}j_n^{\alpha}}_{L^q(0,\infty)}=O(n^{-\frac{7}{3}})
$$
from which we deduce that
$$
\norm{Q_{N,1}}_{L^p(0,\infty)\to L^p(0,\infty)}\lesssim \sum_{n=n_0}^Nn^2n^{-7/3}\lesssim N^{2/3}.
$$
By grouping all estimates, we obtain $\norm{Q_{N}^{(\alpha)}}_{L^p(0,\infty)\to L^p(0,\infty)}\lesssim N^{2/3}$ as claimed.
\end{proof}

\subsection{Conclusion}

It remains to conclude. ll conditions of Theorem \ref{th:main} are satisfied. Therefore, the Hankel prolate spheroidal series
converges in $L^p(0,\infty)$ if and only if the Bessel series converge. The later ones converge in $L^p(0,\infty)$
if and only if $p\in(4/3,4)$. We have thus proved the following:

\begin{theorem}
Let $\alpha>-1/2$ and $c>0$, $N\geq 0$.

Let $(\psi_{n,c}^{(\alpha)})_{n\geq 0}$ be the family of circular prolate spheroidal wave functions.
For a smooth function $f$ on $I=(0,\infty)$, define
$$
\Psi^{(\alpha)}_Nf=\sum_{n=0}^N\scal{f,\psi_{n,c}^{(\alpha)}}_{L^2(0,\infty)}\psi_{n,c}^{(\alpha)}.
$$
Then, for every $p\in(1,\infty)$, $\Psi^{(\alpha)}_N$ extends to a bounded operator $L^p(0,\infty)\to L^p(0,\infty)$.
Further
$$
\Psi^{(\alpha)}_Nf\to f\qquad \mbox{in }L^p(0,\infty)
$$
for every $f\in B_{c,p}^{\alpha}$ if and only if $p\in(4/3,4)$.
\end{theorem}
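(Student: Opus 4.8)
The plan is to deduce both assertions from the general transference principle of Theorem \ref{th:main}, applied on $\Omega=(0,\infty)$ with Lebesgue measure, taking the spherical Bessel functions $\ffi_n=\jc{n}$ as the first basis and the suitably normalized circular prolate spheroidal wave functions $\psi_n=\psi_{n,c}^{(\alpha)}$ as the second one; both are complete orthonormal systems of the Hankel Paley--Wiener space $\mathcal B_c^{\alpha}$, and the critical exponents are $p_0=4/3$, $p_0^\prime=4$. First I would record that all the hypotheses of Theorem \ref{th:main} have already been verified in the preceding subsections. Condition $(L)$ is read off from the $L^p$-norm asymptotics \eqref{eq:lpnorm}--\eqref{eq:lpnormprod}, which make the product $\norm{\jc{n+\ell}}_{L^p(0,\infty)}\norm{\jc{n}}_{L^{p'}(0,\infty)}$ bounded (so $\alpha_p=0$) exactly for $4/3<p<4$ and of order $o(n^{1/6})$ on a larger range (so $\alpha_p<1$). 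Condition $(D)$ follows from Lemma \ref{lem:condD} applied to the two Sturm--Liouville operators whose eigenvalues grow quadratically in $n$. Condition $(R)$ is furnished by the three-term recursion \eqref{coeff-CPSWFs} for the coefficients $d_k^n$, together with the bounds $0\le a_{k,\alpha}\le\frac12$, $b_{k,\alpha}=\frac12+\tilde\eta_{k,\alpha}$ with $|\tilde\eta_{k,\alpha}|\le\frac12$, and the estimates on $f(k,n,c,\alpha)$ collected in Lemma \ref{lem:3}. Finally condition $(B')$ is precisely Lemma \ref{lem:BC}, giving $\beta_p=2/3<1$.

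For the boundedness statement I would invoke the decomposition \eqref{decomgen} from the proof of Theorem \ref{th:main}, which expresses $\Psi_N^{(\alpha)}=\pp_N^{(\alpha)}+K_1-K_2-K_3+K_4+K_5+K_6$, where $\pp_N^{(\alpha)}$ is the orthogonal projection onto the span of $\jc{0},\dots,\jc{N}$. Each $K_j$ is bounded on every $L^p(0,\infty)$, $1<p<\infty$ (in fact uniformly in $N$), while $\pp_N^{(\alpha)}$ is bounded on each such $L^p$ by Proposition \ref{prop:projbessel}, with the coarse bound $\norm{\pp_N^{(\alpha)}}_{L^p(0,\infty)\to L^p(0,\infty)}\lesssim N^{3/4}$ outside $(4/3,4)$. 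Hence, for every fixed $N$ and every $p\in(1,\infty)$, $\Psi_N^{(\alpha)}$ extends to a bounded operator $L^p(0,\infty)\to L^p(0,\infty)$.

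For the convergence statement, the heart of Theorem \ref{th:main} is that, under $(L)$, $(R)$ and $(B')$, all six kernels $K_j$ are bounded \emph{uniformly} in $N$, so that $\norm{\pp_N^{(\alpha)}-\Psi_N^{(\alpha)}}_{L^p(0,\infty)\to L^p(0,\infty)}$ stays bounded for every $1<p<\infty$. Since each partial sum $\Psi_N^{(\alpha)}f$ lies in the closed subspace $\mathcal B_c^{\alpha}$, the convergence $\Psi_N^{(\alpha)}f\to f$ is only meaningful for $f\in B_{c,p}^{\alpha}$, and there the uniform bound reduces it (via condition $(D)$ and Banach--Steinhaus) to the convergence $\pp_N^{(\alpha)}f\to f$ of the spherical Bessel expansion. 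The latter holds if and only if the projections $\pp_N^{(\alpha)}$ are uniformly bounded on $B_{c,p}^{\alpha}$, that is if and only if $4/3<p<4$: sufficiency is the range $4/3<p<4$ of Proposition \ref{prop:projbessel}, and the failure at and beyond the endpoints is the known divergence of the Bessel series (Varona \cite{J.L.VARONA}, Newman--Rudin \cite{Newman}). Transferring this dichotomy through the uniform bound yields the claimed equivalence for $\Psi_N^{(\alpha)}$.

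The step I expect to be the main obstacle is condition $(B')$, namely Lemma \ref{lem:BC}. A direct Cauchy--Schwarz estimate of the kernel $Q_N^{(\alpha)}$ only produces growth of order $N^{1+\alpha_p}$, which is useless; to reach the admissible exponent $\beta_p=2/3<1$ one must expand the off-diagonal product $\jc{n}\jc{n+1}$ by applying the Bessel recursion \eqref{eq:watson} twice, isolate the diagonal Christoffel--Darboux block, and exploit the sharp cancellation $\norm{x^{-2}\jc{n}}_{L^p(0,\infty)}\norm{\jc{n}}_{L^{p'}(0,\infty)}=O(n^{-7/3})$ to defeat the quadratically growing coefficient $\gamma_n\lesssim n^2$. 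A secondary delicate point is the restriction to $B_{c,p}^{\alpha}$ rather than all of $L^p(0,\infty)$: the transference only identifies $\Psi_N^{(\alpha)}$ with $\pp_N^{(\alpha)}$ up to a uniformly bounded error, so the necessity of $4/3<p<4$ rests on the endpoint divergence results for the Bessel basis, which are imported rather than reproved here.
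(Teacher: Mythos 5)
Your proposal is correct and follows essentially the same route as the paper: the paper's own proof of this theorem is exactly the application of the transference principle (Theorem \ref{th:main}) with $\ffi_n=\jc{n}$ and $\psi_n=\psi_{n,c}^{(\alpha)}$, with conditions $(L)$, $(D)$, $(R)$, $(B')$ verified in the preceding subsections via \eqref{eq:lpnorm}--\eqref{eq:lpnormprod}, Lemma \ref{lem:condD}, the recursion \eqref{coeff-CPSWFs} with Lemma \ref{lem:3}, and Lemma \ref{lem:BC}, and with condition $(C)$ imported from Proposition \ref{prop:projbessel} together with the known endpoint divergence results. You also correctly single out Lemma \ref{lem:BC} as the genuinely hard step and the restriction to $B_{c,p}^{\alpha}$ as the delicate point in the necessity direction, just as in the paper.
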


\end{document}